\newtheorem{theorem}{Theorem}[section]
\newtheorem{proposition}[theorem]{Proposition}
\newtheorem{corollary}[theorem]{Corollary}
\newtheorem{conjecture}[theorem]{Conjecture}
\newtheorem{question}[theorem]{Question}
\newtheorem{lemma}[theorem]{Lemma}
\theoremstyle{definition}
\newtheorem{definition}{Definition}[section]
\newtheorem{remark}{Remark}[section]
\DeclareMathOperator{\Id}{Id}
\DeclareMathOperator{\fast}{\mathsf{hare}}
\DeclareMathOperator{\slow}{\mathsf{tortoise}}
\DeclareMathOperator{\Image}{Im}
\begin{document}

\title[]{Stack-sorting for words} \keywords{Stack-sorting; words; pattern avoidance; decreasing plane tree; postorder; fertility}
\subjclass[2010]{Primary 05A05; Secondary 05A15}

\author[]{Colin Defant}
\address[]{Fine Hall, 304 Washington Rd., Princeton, NJ 08544}
\email{cdefant@princeton.edu}
\author[]{Noah Kravitz}
\address[]{Grace Hopper College, Yale University, New Haven, CT 06510, USA}
\email{noah.kravitz@yale.edu}

\begin{abstract} 
We introduce operators $\fast$ and $\slow$, which act on words as natural generalizations of West's stack-sorting map. We show that the heuristically slower algorithm $\slow$ can sort words arbitrarily faster than its counterpart $\fast$. We then generalize the combinatorial objects known as valid hook configurations in order to find a method for computing the number of preimages of any word under these two operators. We relate the question of determining which words are sortable by $\fast$ and $\slow$ to more classical problems in pattern avoidance, and we derive a recurrence for the number of words with a fixed number of copies of each letter (permutations of a multiset) that are sortable by each map. In particular, we use generating trees to prove that the $\ell$-uniform
words on the alphabet $[n]$ that avoid the patterns $231$ and $221$ are counted by the $(\ell+1)$-Catalan number $\frac{1}{\ell n+1}{(\ell+1)n\choose n}$. We conclude with several open problems and conjectures. 

 \end{abstract}
\maketitle

\section{Introduction}
\subsection{Background}

Throughout this paper, the term \emph{word} refers to a finite string of letters taken from the alphabet $\mathbb N$ of positive integers. Given a word $p=p_1\cdots p_k$, we say a word $w=w_1\cdots w_n$ \emph{contains the pattern} $p$ is there are indices $i_1<\cdots<i_k$ such that $w_{i_1}\cdots w_{i_k}$ has the same relative order as $p$. Otherwise, we say $w$ \emph{avoids} $p$. For example, $3422155$ contains the pattern $211$ because the letters $322$ have the same relative order in $w$ as $211$. On the other hand, $3422155$ avoids the pattern $4321$. 

A \emph{permutation} is a word in which no letter appears more than once; it is in the context of permutations that pattern avoidance has received the most attention. Let $S_n$ denote the set of permutations whose letters are the elements of the set $\{1,\ldots,n\}$. The study of pattern avoidance in permutations originated in Knuth's monograph \emph{The Art of Computer Programming} \cite{Knuth}. Knuth defined a sorting algorithm that makes use of a vertical \emph{stack}, and he showed that this algorithm sorts a permutation into increasing order if and only if it avoids the pattern $231$. In his 1990 Ph.D. thesis, West \cite{West} introduced a deterministic variant of Knuth's algorithm, which we call the \emph{stack-sorting map} and denote by $s$. This map operates as follows. 

Place the input permutation on the right side of a vertical ``stack." At each point in time, if the stack is empty or the leftmost entry on the right side of the stack is smaller than the entry at the top of the stack, \emph{push} that leftmost entry into the stack. If there is no entry on the right of the stack or if the leftmost entry on the right side of the stack is larger than the entry on the top of the stack, \emph{pop} the top entry out of the stack and add it to the end of the growing output permutation to the left of the stack. Let $s(\pi)$ denote the output permutation that is obtained by sending $\pi$ through the stack. 
Figure \ref{Fig1} illustrates this procedure for $s(4162)=1426$.  

\begin{figure}[H]
\begin{center}
\includegraphics[width=1\linewidth]{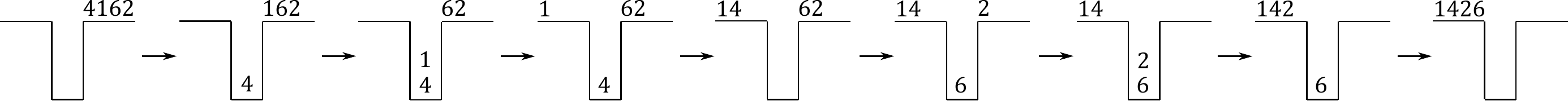}
\end{center}  
\caption{The stack-sorting map $s$ sends $4162$ to $1426$.}\label{Fig1}
\end{figure}

If $\pi$ is a permutation with largest entry $n$, we can write $\pi=LnR$, where $L$ (respectively, $R$) is the (possibly empty) substring of $\pi$ to the left (respectively, right) of the entry $n$. West observed that the stack-sorting map can be defined recursively by $s(\pi)=s(L)s(R)n$. It is also possible to define the map $s$ in terms of tree traversals of decreasing binary plane trees; we will revisit this idea in Section 3. 

We do not attempt to give a comprehensive treatment of the extensive literature concerning the stack-sorting map $s$. Instead, we provide the background that is immediately relevant to our investigations and refer the interested reader to \cite{Bona, BonaSurvey, Defant2,Defant5} (and the references therein) for further information. 

A permutation $\pi\in S_n$ is called \emph{$t$-stack-sortable} if $s^t(\pi)=123\cdots n$, where $s^t$ denotes the composition of $s$ with itself $t$ times. A $1$-stack-sortable permutation is simply called \emph{sortable}. It follows from Knuth's work that a permutation is sortable if and only if it avoids the pattern $231$. According to the well-known enumeration of $231$-avoiding permutations, there are $C_n$ sortable permutations in $S_n$,
where $C_n=\frac{1}{n+1}{2n\choose n}$ is the $n$-th Catalan number. West \cite{West} conjectured that there are exactly \[\frac{2}{(n+1)(2n+1)}{3n\choose n}\] $2$-stack-sortable permutations in $S_n$, and Zeilberger \cite{Consumption} later proved this fact. 

Much of the study of the stack-sorting map can be phrased in terms of preimages of permutations under $s$. In fact, the study of stack-sorting preimages of permutations dates back to West \cite{West}, who called $|s^{-1}(\pi)|$ the \emph{fertility} of the permutation $\pi$ and computed this fertility for some specific types of permutations. Bousquet-M\'elou \cite{Bousquet} later studied permutations with positive fertilities, which she called \emph{sorted} permutations. In doing so, she asked for a method for computing the fertility of any given permutation. The first author achieved this in much greater generality in \cite{Defant} and \cite{Defant2} by developing a theory of new combinatorial objects called \emph{valid hook configurations}. The authors of \cite{Defant3} have used valid hook configurations to find connections among permutations with fertility $1$, certain weighted set partitions, and cumulants arising in free probability theory. The first author has investigated which numbers arise as the fertilities of permutations \cite{Defant4}. In studying preimages of permutation classes under the stack-sorting map, he has also obtained several enumerative results that link the stack-sorting map with well-studied sequences \cite{Defant5}. 

Several authors have extended the well-studied area of pattern avoidance in permutations to pattern avoidance in words \cite{Albert,Atkinson,Branden,
Burstein,Burstein2,Heubach,
Heubach2,Mansour,Pudwell}. One motivation for this line of inquiry comes from the study of sorting algorithms defined on words \cite{Albert,Atkinson}. The first order of business in this article is to extend West's stack-sorting map $s$ so that it can operate on words. There is one point of ambiguity in how one defines this extension: should a letter be allowed to sit on top of a copy of itself in the stack? If, for instance, we send the word $221$ through the stack, we want to know if the second $2$ forces the first $2$ to pop out of the stack. Depending on which convention is used, the output permutation could either be $122$ or $212$; we avoid this potential issue by considering both variations.  With this background in mind, we offer the following recursive definitions of the functions $\fast$ and $\slow$ from the set of all words to itself.

\begin{definition}
First, let $\fast(\varepsilon)=\slow(\varepsilon)=\varepsilon$, where $\varepsilon$ is the empty word. Now, suppose $w$ is a nonempty word with largest letter $n$. If the letter $n$ appears $k$ times in $w$, then we can uniquely write $w=A_1nA_2n\cdots nA_{k+1}$, where the letters in the (possibly empty) words $A_i$ are all at most $n-1$. We now define
$$\fast(w)=\fast(A_1)\fast(A_2)\cdots\fast(A_{k+1})nn\cdots n$$ and 
$$\slow(w)=\slow(A_1)\slow(A_2)n\slow(A_3)n\cdots n\slow(A_k)n\slow(A_{k+1})n,$$ where there are exactly $k$ copies of the letter $n$ at the end of the word $\fast(w)$.
\end{definition}

The map $\fast$ operates by sending a word through the stack with the convention that a letter \emph{can} sit on top of a copy of itself in the stack. On the other hand, $\slow$ operates by sending a word through the stack with the convention that a letter \emph{cannot} sit on top of a copy of itself. The main purposes of this article are to compare the functions $\fast$ and $\slow$ and to show how many of the properties of West's stack-sorting map $s$ extend to this new setting. In particular, we develop a method for computing the number of preimages of a given word under each map.

\subsection{Notation}

We require the following notation in order to state our main results.  
\begin{itemize}
\item Let $\mathcal W$ denote the set of all words of finite length over the alphabet $\mathbb N$. This set is a monoid with concatenation as its binary operation. As such, $A_1 \cdots A_k$ denotes the concatenation of the words $A_1,\ldots,A_k$. We will often speak of a word $w=w_1\cdots w_m$; unless otherwise stated, $w_1,\ldots,w_m$ are assumed to be the \emph{letters} of the word $w$ (so $w$ has length $m$). 
\item Given a tuple $\textbf{c}=(c_1, \dots, c_n)$ of nonnegative integers, let $\mathcal{W}_{\textbf{c}}$ be the set of all words with exactly $c_i$ $i$'s for each $1\leq i \leq n$. One can think of $\mathcal W_{\textbf{c}}$ as the set of permutations of the multiset $\{1^{c_1},2^{c_2},\ldots,m^{c_m}\}$. 
\item Let $\Id_{\mathbf{c}}$ be the unique word in $\mathcal{W}_{\mathbf{c}}$ whose letters are nondecreasing from left to right. By abuse of terminology, we call $\Id_{\mathbf{c}}$ the \textit{identity word} in $\mathcal W_\textbf{c}$.  We will omit the subscript when it is obvious from context.
\item We call a word \emph{normalized} if it is an element of $\mathcal W_{\textbf c}$ for some vector $\textbf{c}=(c_1,\ldots,c_n)$ in which each $c_i$ is strictly positive. For example, $31341$ is not normalized because it does not contain the letter $2$. 
\item Let $\fast^k$ denote the map $\fast$ composed with itself $k$ times, and define $\slow^k$ similarly. Given a word $w \in \mathcal{W}_{\textbf{c}}$, let $\langle w \rangle_{\textsf{fast}}$ be the smallest nonnegative integer $k$ such that $\textsf{fast}^k(w)=\Id_{\textbf{c}}$.  Similarly, let $\langle w \rangle_{\textsf{slow}}$ be the smallest nonnegative integer $k$ such that $\textsf{slow}^k(w)=\Id_{\textbf{c}}$.  In particular, put $\langle \varepsilon \rangle_{\fast}=\langle \varepsilon \rangle_{\slow}=0$. These values measure how ``far" $w$ is from the identity word under our generalized stack-sorting maps.
\item A \emph{composition} of a positive integer $m$ is a tuple of positive integers that sum to $m$.
\end{itemize}

\subsection{Outline of the Paper}
The operators $\fast$ and $\slow$ get their names from the heuristic idea that iteratively applying the map $\fast$ to a word should produce an identity word at least as fast as iteratively applying $\slow$ does. More formally, it seems reasonable to expect that $\langle w\rangle_{\fast}\leq\langle w\rangle_{\slow}$ for every word $w$. For example, $\langle 2221\rangle_{\fast}=1<3=\langle 2221\rangle_{\slow}$. However, in some special cases, we find the fable had it right: slow and steady wins the race! That is, there exist words $w$ for which $\langle w\rangle_{\fast}>\langle w\rangle_{\slow}$. In Section $2$, we will construct a word $\eta_n$ of length $2n+1$ such that $\langle \eta_n\rangle_{\fast}=2n-2$ and $\langle \eta_n\rangle_{\slow}=n$ for each positive integer $n$. In the same section, we show how to rewrite these maps in terms of West's stack-sorting map $s$ and also analyze the worst-case-scenario sorting for each map. 

In Section 3, we describe the aforementioned connection between $s$ and tree traversals of decreasing binary plane trees. We then explain the analogous connection for the maps $\fast$ and $\slow$. Answering a question raised in \cite{Defant3}, we generalize valid hook configurations to words, and we use these objects to show how to calculate the number of preimages of a word under the maps $\fast$ and $\slow$. This vastly generalizes the work on computing fertilities of permutations undertaken by West \cite{West} and the first author \cite{Defant,Defant2}.  

In Section 4, we utilize the ideas developed in Section 3 to study what we call $\fast$-fertility numbers and $\slow$-fertility numbers. Specifically, we show that for every nonnegative integer $f$, there exists a word $w$ such that $|\fast^{-1}(w)|=|\slow^{-1}(w)|=f$. As demonstrated in \cite{Defant4}, this result is false if we require our words to be permutations.

In Section 5, we show that a word $w\in\mathcal W_{\textbf c}$ satisfies $\fast(w)=\Id_{\textbf c}$ if and only if it avoids the pattern $231$ and satisfies $\slow(w)=\Id_{\textbf c}$ if and only if it avoids the patterns $231$ and $221$. We discuss known results concerning words that avoid the pattern $231$ and present new enumerative results concerning words that avoid the patterns $231$ and $221$. Specifically, we provide a recurrence for $\mathcal N(c_1,\ldots,c_n)$, the number of words in $\mathcal W_{(c_1,\ldots,c_n)}$ that avoid $231$ and $221$. We also use generating trees to prove that \[\mathcal{N}(\underbrace{\ell, \ell, \dots, \ell}_{n})=\frac{1}{\ell n+1}{(\ell+1)n\choose n}.\] In Section 6, we list several open problems and conjectures. 

\section{The Tortoise and the Hare}
We begin this section by recasting $\fast$ and $\slow$ explicitly in terms of the action of West's stack-sorting map $s$.  Given a vector $\textbf{c}=(c_1, \dots, c_n)$ of nonnegative integers, define the maps $\phi_{\textbf{c}}^{asc}, \phi_{\textbf{c}}^{des}: \mathcal{W}_{\textbf{c}} \to S_{c_1+\cdots +c_n}$ (recall that $S_m$ is the set of permutations of $\{1,\ldots,m\}$) as follows.  For each $1\leq i \leq n$, let $p_i=c_1+\cdots +c_{i-1}$.  To obtain $\phi_{\textbf{c}}^{asc}(w)$ from $w$, we replace the $i$'s by the integers $p_i+1, p_i+2, \dots, p_i+c_i$ in ascending order for each $i$.  To obtain $\phi_{\textbf{c}}^{des}(w)$, we replace the $i$'s by the integers $p_i+1, p_i+2, \dots, p_i+c_i$ in descending order for each $i$.  Note that even though these maps are not surjective if any $c_i>1$, they are always injective.  We define the map $\psi_{\textbf{c}}:S_{c_1+\cdots +c_n} \to \mathcal{W}_{\textbf{c}}$ as follows.  To obtain $\psi_{\textbf{c}}(\pi)$ from $\pi$, we replace all of the digits $p_i+1, p_i+2, \dots, p_i+c_i$ by the letter $i$ for each $1 \leq i \leq n$.  Clearly, $\psi_{\textbf{c}} \circ \phi_{\textbf{c}}^{asc}=\psi_{\textbf{c}} \circ \phi_{\textbf{c}}^{des}: \mathcal{W}_{\textbf{c}} \to \mathcal{W}_{\textbf{c}}$ is the identity map.  Similarly, $\phi_{\textbf{c}}^{asc} \circ \psi_{\textbf{c}}: \Image(\phi_{\textbf{c}}^{asc}) \to \Image(\phi_{\textbf{c}}^{asc})$ and $\phi_{\textbf{c}}^{des} \circ \psi_{\textbf{c}}: \Image(\phi_{\textbf{c}}^{des}) \to \Image(\phi_{\textbf{c}}^{des})$ are both the identity map (restricted to the correct subset of $S_{c_1+\cdots +c_n}$). Consequently, $\psi_{\textbf{c}}$ is a left inverse for both $\phi_{\textbf{c}}^{asc}$ and $\phi_{\textbf{c}}^{des}$.

As an example, $\phi_{(2,2,3)}^{asc}(3313221)=5617342$ and $\phi_{(2,2,3)}^{des}(3313221)=7625431$.  We emphasize that $\psi_{\textbf{c}}$ depends strongly on $\textbf{c}$. For example, $\psi_{(2,2,3)}(5617342)=3313221$ as expected, whereas $\psi_{(2,3,2)}(5617342)=2313221$ and $\psi_{(6,1)}(5617342)=1112111$.  The following lemma reduces the computation of $\fast$ and $\slow$ to computations involving $s$.

\begin{proposition}\label{prop:permexpand}
For every word $w \in \mathcal{W}_{\textbf{c}}$, we have
$$\fast(w)=(\psi_{\textbf{c}} \circ s \circ \phi_{\textbf{c}}^{des})(w) \quad \text{and} \quad \slow(w)=(\psi_{\textbf{c}} \circ s \circ \phi_{\textbf{c}}^{asc})(w).$$  Moreover, for every positive integer $k$, we have $$\slow^k(w)=(\psi_{\textbf{c}} \circ s^k \circ \phi_{\textbf{c}}^{asc})(w).$$
\end{proposition}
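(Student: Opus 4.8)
The plan is to prove all three identities by induction on the length of $w$, exploiting the recursive definitions of $\fast$, $\slow$, and $s$ in a coordinated way. The base case $w=\varepsilon$ is immediate since all maps fix the empty word. For the inductive step, write $w$ with largest letter $n$ occurring $k$ times, so $w=A_1nA_2n\cdots nA_{k+1}$ with each $A_i$ using only letters $\le n-1$. The key observation is how $\phi^{des}_{\mathbf c}$ and $\phi^{asc}_{\mathbf c}$ interact with this decomposition: each sends the letters of $w$ that equal $n$ to the block $\{m-k+1,\ldots,m\}$ (where $m=c_1+\cdots+c_n$), which are the $k$ largest values, and sends the letters of each $A_i$ into a fixed sub-alphabet disjoint from that block. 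So if $\pi=\phi^{des}_{\mathbf c}(w)$, then $\pi$ has the form $\pi=B_1 x_1 B_2 x_2\cdots x_k B_{k+1}$ where $\{x_1,\ldots,x_k\}$ are the top $k$ values, and crucially, in the \emph{descending} case $x_1>x_2>\cdots>x_k$, with $x_1=m$, while in the \emph{ascending} case $x_1<x_2<\cdots<x_k$, with $x_k=m$.

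For the $\fast$ identity I would then run West's recursion $s(LnR)=s(L)s(R)n$ on $\pi=\phi^{des}_{\mathbf c}(w)$ repeatedly, peeling off the top value $m=x_1$ first (it sits at the front of the block), then $x_2$, and so on; each peel reattaches one top value at the very end, and one checks that $s(\pi)=s(B_1)s(B_2)\cdots s(B_{k+1})\,x_k x_{k-1}\cdots x_1$, i.e.\ the top block comes out in increasing order at the end. Applying $\psi_{\mathbf c}$ turns each $x_j$ back into $n$ and, by the induction hypothesis applied to each $A_i$ (noting $\psi_{\mathbf c}\circ s\circ\phi^{des}_{\mathbf c}$ restricted to the sub-alphabet of $A_i$ agrees with $\fast$ on $A_i$, since the relevant values are consecutive and $\phi^{des}$ reverses within each letter-class), this yields exactly $\fast(A_1)\cdots\fast(A_{k+1})n\cdots n$. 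The $\slow$ identity is analogous but the peeling order differs: because $x_1<x_2<\cdots<x_k$ in the ascending encoding, West's recursion on $\phi^{asc}_{\mathbf c}(w)$ interleaves the top values back among the processed blocks in the pattern $s(B_1)s(B_2)x_1 s(B_3)x_2\cdots x_{k-1}s(B_{k+1})x_k$, which after applying $\psi_{\mathbf c}$ matches the definition of $\slow$ verbatim.

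For the third statement, $\slow^k(w)=\psi_{\mathbf c}\circ s^k\circ\phi^{asc}_{\mathbf c}$, I would induct on $k$. The case $k=1$ is the second identity just proved. For the inductive step the subtlety — and I expect this to be the main obstacle — is that $\psi_{\mathbf c}$ is only a \emph{left} inverse of $\phi^{asc}_{\mathbf c}$, so one cannot simply insert $\phi^{asc}_{\mathbf c}\circ\psi_{\mathbf c}$ in the middle of $s^k=s\circ s^{k-1}$ for free. The resolution is the compatibility noted in the excerpt: $\phi^{asc}_{\mathbf c}\circ\psi_{\mathbf c}$ is the identity on $\Image(\phi^{asc}_{\mathbf c})$, so it suffices to show that $s$ preserves $\Image(\phi^{asc}_{\mathbf c})$, i.e.\ that if the relative order within each value-class of $\pi$ is ascending, then the same holds for $s(\pi)$. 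This should follow from West's recursion: equal-encoded letters (those mapped from the same letter of $w$) that appear in ascending order are never reordered by a single pass of $s$, because $s$ only moves a value past smaller values, not past a larger copy of something in its own class. Granting that invariance, one writes $s^k\circ\phi^{asc}_{\mathbf c}=s^{k-1}\circ s\circ\phi^{asc}_{\mathbf c}=s^{k-1}\circ\phi^{asc}_{\mathbf c}\circ\psi_{\mathbf c}\circ s\circ\phi^{asc}_{\mathbf c}=s^{k-1}\circ\phi^{asc}_{\mathbf c}\circ\slow(w)$, and then the induction hypothesis applied to $\slow(w)\in\mathcal W_{\mathbf c}$ finishes the argument. (The same invariance fails for $\phi^{des}_{\mathbf c}$, which is exactly why no analogous iterated formula for $\fast$ is claimed.)
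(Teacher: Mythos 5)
Your proposal is correct, but it takes a different route from the paper for the first two identities. The paper proves $\fast=\psi_{\textbf{c}}\circ s\circ\phi_{\textbf{c}}^{des}$ (and likewise for $\slow$) by a direct stack-tracking argument: it pairs each entry of $\phi_{\textbf{c}}^{des}(w)$ with the letter of $w$ in the same position and observes that the two sorting processes push and pop these paired items at exactly the same moments, since the descending (resp.\ ascending) tie-breaking within each letter class reproduces the convention that a letter can (resp.\ cannot) sit atop a copy of itself; applying $\psi_{\textbf{c}}$ then finishes immediately. You instead induct on the length of $w$ via the decomposition $w=A_1nA_2n\cdots nA_{k+1}$ and West's recursion $s(LmR)=s(L)s(R)m$, checking that the peeling order of the top block $x_1>\cdots>x_k$ (descending case) or $x_1<\cdots<x_k$ (ascending case) reproduces the recursive definitions of $\fast$ and $\slow$; this is a valid alternative, and it has the virtue of working purely from the recursive definitions rather than the stack description. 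One small point to tighten: your parenthetical justification that the induction hypothesis applies to each block --- ``the relevant values are consecutive'' --- is not accurate, since a letter class of $w$ is generally split among several blocks $A_i$, so the values landing in a given $B_i$ need not be consecutive; what you actually need (and what is true) is that $B_i$ is order-isomorphic to $\phi^{des}$ (resp.\ $\phi^{asc}$) applied to $A_i$ alone, because within each letter class the global descending (resp.\ ascending) assignment restricts consistently to $A_i$, and $s$ depends only on relative order. For the iterated formula your argument coincides with the paper's: both reduce to the invariance $s(\Image(\phi_{\textbf{c}}^{asc}))\subseteq\Image(\phi_{\textbf{c}}^{asc})$, justified by the observation that $s$ never reverses a pair in which the smaller value comes first (and your closing remark that this invariance fails for $\phi_{\textbf{c}}^{des}$ matches the paper's footnote); the expression $s^{k-1}\circ\phi_{\textbf{c}}^{asc}\circ\slow(w)$ is a harmless abuse of notation for $s^{k-1}\bigl(\phi_{\textbf{c}}^{asc}(\slow(w))\bigr)$.
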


\begin{proof}
Fix a word $w \in \mathcal{W}_{\textbf{c}}$.  For the first statement, consider the permutation $\phi_{\textbf{c}}^{des}(w)$.  We may associate each entry of $\phi_{\textbf{c}}^{des}(w)$ with the letter that appears in the corresponding position in $w$.  If we keep track of the positions of individual entries and letters when we apply $s$ to $\phi_{\textbf{c}}^{des}(w)$ and $\fast$ to $w$, we see that the corresponding entries and letters enter the stack and pop out of the stack identically.  Hence, when we apply $\psi_{\textbf{c}}$ to $s(\phi_{\textbf c}^{des}(w))$, each entry is taken to the correct letter value in $\fast(w)$.  This shows that $\fast(w)=(\psi_{\textbf{c}} \circ s \circ \phi_{\textbf{c}}^{des})(w)$.  The same argument shows that $\slow(w)=(\psi_{\textbf{c}} \circ s \circ \phi_{\textbf{c}}^{asc})(w)$.

For the second statement, it suffices to note that $s$ maps $\Image(\phi_{\textbf{c}}^{asc})$ into itself.\footnote{It is not difficult to see that $(s \circ \phi_{\textbf{c}}^{des})(w)\not\in\Image(\phi_{\textbf{c}}^{des})$ if two letters of $w$ with the same value are ever in the stack simultaneously during the $\fast$-sorting process.} This follows from the simple observation that if $a<b$ and $a$ appears before $b$ in a permutation $\pi$, then $a$ appears before $b$ in $s(\pi)$. 
\end{proof}

The maps $\fast$ and $\slow$ do in fact ``sort'' words in the sense that iterative applications of either map to any word will eventually reach an identity word, which is a fixed point.  A natural question is how many iterations it takes to reach this fixed point.  Recall that $\langle \cdot \rangle_{\fast}$ and $\langle \cdot \rangle_{\slow}$ measure this ``distance'' from the identity.  In each $\mathcal{W}_{\mathbf{c}}$, this metric equals $0$ for only the identity word, and it equals $1$ for the nonidentity words that are completely sorted in a single go.  

Intuitively, $\fast$ should be the more efficient sorting algorithm because a later occurrence of a large letter value does not cause the previous occurrences to pop out of the stack prematurely.  It is easy to show that worst-case-scenario sorting with $\fast$ is much more efficient than worst-case-scenario sorting with $\slow$. For instance, if $w$ is a word with largest letter $n$, then all of the $n$'s are at the very end of $\fast(w)$, whereas only one $n$ is guaranteed to be at the end of $\slow(w)$.  In fact, this ``rate of progress'' is the worst-case scenario for each map; this is a natural way in which $\fast$ is faster than $\slow$.

\begin{proposition}
Let $\textbf{c}=(c_1, \dots, c_n)$, where $c_1,\ldots,c_n$ are positive integers.  For every $w \in \mathcal{W}_{\textbf{c}}$, we have $$\langle w \rangle_{\fast} \leq n-1 \quad \text{and} \quad \langle w \rangle_{\slow} \leq c_2+c_3+\cdots+c_n.$$  Moreover, equality is achieved in both cases by the word $\rho \in \mathcal{W}_{\textbf{c}}$ that is obtained from $\Id_{\textbf{c}}$ by moving all of the $1$'s to the end of the word.
\end{proposition}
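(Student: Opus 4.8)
The plan is to establish both upper bounds by induction on $n$, and the equality cases by direct computation, in each instance exploiting the single structural feature common to the two maps: each one pushes the copies of the largest letter $n$ to the end of the word and never disturbs them afterward. For $\fast$ this happens in one step, which is what makes $n-1$ iterations suffice; for $\slow$ the copies of $n$ pile up at the tail at a rate of at least one per iteration, which is why $c_n$ extra iterations are needed and the bound is $c_2+\cdots+c_n$.

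For the $\fast$ bound, write $w=A_1nA_2\cdots nA_{c_n+1}$ as in the definition, so $\fast(w)=v\,n^{c_n}$ with $v:=\fast(A_1)\cdots\fast(A_{c_n+1})$; since $\fast$ preserves the multiset of letters and $\mathbf c$ is normalized, $v\in\mathcal W_{(c_1,\ldots,c_{n-1})}$. The elementary observation $\fast(u\,n^{c_n})=\fast(u)\,n^{c_n}$ for any word $u$ omitting $n$ (take all blocks after the first to be empty) then gives $\fast^{1+i}(w)=\fast^i(v)\,n^{c_n}$, and the inductive hypothesis $\langle v\rangle_{\fast}\le n-2$ yields $\fast^{n-1}(w)=\Id_{(c_1,\ldots,c_{n-1})}\,n^{c_n}=\Id_{\mathbf c}$. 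For the $\slow$ bound one needs the extra lemma that if a word $z$ with largest letter $n$ (appearing $c_n$ times) ends in at least $\ell$ copies of $n$, where $\ell\le c_n-1$, then $\slow(z)$ ends in at least $\ell+1$ copies of $n$. This comes straight out of the formula $\slow(z)=\slow(A_1)\slow(A_2)\,n\,\slow(A_3)\,n\cdots n\,\slow(A_{c_n+1})\,n$: a tail $n^{\ell}$ of $z$ amounts to the last $\ell$ blocks $A_i$ being empty, and these empty blocks reappear in $\slow(z)$ each flanked by an $n$, producing a tail $n^{\ell+1}$. Since $\slow(w)$ always ends in $n$, iterating the lemma shows $\slow^{c_n}(w)=u\,n^{c_n}$ for some $u\in\mathcal W_{(c_1,\ldots,c_{n-1})}$; then $\slow^{c_n+i}(w)=\slow^i(u)\,n^{c_n}$, and the inductive hypothesis $\langle u\rangle_{\slow}\le c_2+\cdots+c_{n-1}$ gives $\langle w\rangle_{\slow}\le c_n+(c_2+\cdots+c_{n-1})$. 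The base case $n=1$ is trivial for both maps.

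For the equality statement, take $\rho=2^{c_2}3^{c_3}\cdots n^{c_n}\,1^{c_1}$. A direct induction on $j$ — peeling off the largest letter as above and using that every nondecreasing word is fixed by both maps — shows that, for $0\le j\le n-1$, $\fast^j(\rho)$ is the word obtained from $\Id_{\mathbf c}$ by moving the block $1^{c_1}$ to sit immediately after the block of $(n-j)$'s (this being $\Id_{\mathbf c}$ itself when $j=n-1$), and that $\slow^j(\rho)=2^{c_2}\cdots(n-1)^{c_{n-1}}\,n^{c_n-j}\,1^{c_1}\,n^{j}$ for $0\le j\le c_n$. From the first description, $\fast^{n-1}(\rho)=\Id_{\mathbf c}$, whereas for $j\le n-2$ the word $\fast^j(\rho)$ begins with the letter $2$ and so is not $\Id_{\mathbf c}$; hence $\langle\rho\rangle_{\fast}=n-1$. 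From the second, $\slow^{c_n}(\rho)=2^{c_2}\cdots(n-1)^{c_{n-1}}\,1^{c_1}\,n^{c_n}$ has all its $n$'s at the end, so $\slow^{c_n+i}(\rho)=\slow^i(\rho')\,n^{c_n}$ where $\rho'=2^{c_2}\cdots(n-1)^{c_{n-1}}\,1^{c_1}$ is the analogue of $\rho$ for $\mathcal W_{(c_1,\ldots,c_{n-1})}$, while for $j<c_n$ the word $\slow^j(\rho)$ has a copy of $n$ before a copy of $1$ and hence is not $\Id_{\mathbf c}$; combining these with the inductive value $\langle\rho'\rangle_{\slow}=c_2+\cdots+c_{n-1}$ gives $\langle\rho\rangle_{\slow}=c_n+(c_2+\cdots+c_{n-1})=c_2+\cdots+c_n$. (Alternatively, the $\slow$ statements follow from Proposition~\ref{prop:permexpand}: $\phi^{asc}_{\mathbf c}(\rho)$ is the cyclic shift $(c_1+1)(c_1+2)\cdots m\,1\,2\cdots c_1$ of $\Id_m$, on which, since $s$ fixes increasing runs, $s^j$ can be computed explicitly.)

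The arithmetic is routine throughout; the only step that demands real care is the $\slow$-recursion bookkeeping — verifying precisely that one copy of $n$ is added to the tail at each step (keeping the index gymnastics in $\slow(A_1)\slow(A_2)\,n\,\slow(A_3)\,n\cdots$ straight and handling the boundary once the tail has length $c_n$), and, in the equality case, pinning down $\slow^j(\rho)$ for every $j$. Once that lemma is in place, both inductions close at once.
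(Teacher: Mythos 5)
Your proposal is correct and follows essentially the same route as the paper: both arguments rest on the observation that each application of $\fast$ (respectively $\slow$) adds a full block (respectively at least one copy) of the largest remaining letters to a sorted tail that is never disturbed again, and both verify the equality claims by explicitly computing the iterates of $\rho=2^{c_2}\cdots n^{c_n}1^{c_1}$. Your packaging as an induction on $n$ with an inner tail-growth lemma, rather than the paper's single induction on the number of iterations, is only a cosmetic difference.
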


\begin{proof}
Fix some $w \in \mathcal{W}_{\textbf{c}}$. For the sake of clarity, let $i^{c_i}$ denote the word formed by concatenating the letter $i$ with itself $c_i$ times. It is clear that $n^{c_n}$ appears at the very end of $\fast(w)$.  By induction, we see that for every $1 \leq k \leq n-1$, the word $\fast^k(w)$ ends with the string \[(n-k+1)^{c_{n-k+1}}(n-k+2)^{c_{n-k+2}}\cdots(n-1)^{c_{n-1}}n^{c_n}.\]  In particular, $\fast^{n-1}(w)=\Id_{\textbf c}$, which establishes the first inequality.  In much the same way, we know that $\slow^k(w)$ ends with the $k$ largest letters in increasing order.  This implies that $\slow^{c_2+c_3+\cdots+c_n}(w)=\Id_{\textbf c}$ and establishes the second inequality.

We now prove the second part of the lemma. By definition, $\rho=2^{c_2} 3^{c_3}\cdots n^{c_n} 1^{c_1}$.  Induction on $k$ shows that $\fast^k(\rho)=2^{c_2} 3^{c_3} \cdots (n-k)^{c_{n-k}} 1^{c_1} (n-k+1)^{c_{n-k+1}} \cdots n^{c_n}$ for each $0 \leq k \leq n-1$.  Hence, $\langle \rho \rangle_{\fast}=n-1$.  Similarly, each iterative application of $\slow$ to $\rho$ moves the letter directly to the left of the $1$'s to the position directly to the right of the $1$'s (which stay together).  Hence, $\langle \rho \rangle_{\slow}=c_2+\cdots +c_n$.
\end{proof}

In light of the previous lemma, one would na\"{i}vely expect $\fast$ to sort all words faster than $\slow$, i.e., $\langle w \rangle_{\fast}\leq \langle w \rangle_{\slow}$.  However, this turns out not always to happen: even though $\fast$ seems to make more progress in the first few iterations, sometimes $\slow$ catches up and reaches the identity first! For example, we have \[3662451\xrightarrow{\fast}3241566\xrightarrow{\fast}2314566\xrightarrow{\fast}2134566\xrightarrow{\fast}1234566\] and \[3662451\xrightarrow{\slow}3624156\xrightarrow{\slow}3214566\xrightarrow{\slow}1234566,\]
so \[\langle 3662451\rangle_{\fast}=4>3=\langle 3662451\rangle_{\slow}.\]
The following theorem shows that $\slow$ can actually be arbitrarily faster than $\fast$.

\begin{theorem}\label{thm:tortoisebeatsharebyalot}
For any integer $n \geq 3$, the word \[\eta_n=357\cdots(2n-3)(2n)(2n)246\cdots(2n-2)(2n-1)1\] has length $2n+1$ and satisfies \[\langle \eta_n\rangle_{\fast}=2n-2\quad\text{and}\quad\langle \eta_n\rangle_{\slow}=n.\]
\end{theorem}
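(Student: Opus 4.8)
The plan is to transfer everything to West's map $s$ and then iterate explicitly. Write $\mathbf c=(1,\dots,1,2)$ for the content vector of $\eta_n$, and for a permutation $\pi$ let $\langle\pi\rangle_s$ denote the least $k$ with $s^k(\pi)$ equal to the identity; note that the length of $\eta_n$ is $(n-2)+2+(n-1)+1+1=2n+1$ by inspection. For the $\slow$ bound, set $\sigma_n:=\phi^{asc}_{\mathbf c}(\eta_n)=3\,5\cdots(2n-3)\,(2n)\,(2n+1)\,2\,4\cdots(2n-2)\,(2n-1)\,1$. By Proposition \ref{prop:permexpand}, $\slow^k(\eta_n)=\psi_{\mathbf c}(s^k(\sigma_n))$, and since each $s^k(\sigma_n)$ lies in $\Image(\phi^{asc}_{\mathbf c})$ while $\phi^{asc}_{\mathbf c}(\Id_{\mathbf c})=\Id_{2n+1}$, we get $\langle\eta_n\rangle_{\slow}=\langle\sigma_n\rangle_s$. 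I would then compute $s(\sigma_n)$ and $s^2(\sigma_n)$ directly from the recursion $s(LnR)=s(L)s(R)n$, repeatedly using the elementary fact that $s$ fixes any suffix which is the increasing run of the largest values of a permutation. This gives $s^2(\sigma_n)=\mu_{n-1}\,(2n-2)(2n-1)(2n)(2n+1)$, where $\mu_m:=3\,5\cdots(2m-1)\,2\,4\cdots(2m-2)\,1$ is a permutation of $\{1,\dots,2m-1\}$. A one-line induction from the same recursion shows $s(\mu_m)=\mu_{m-1}\,(2m-2)(2m-1)$ for $m\ge 3$ and $s(\mu_2)=123$, so $\langle\mu_m\rangle_s=m-1$; hence $\langle\sigma_n\rangle_s=2+\langle\mu_{n-1}\rangle_s=n$.

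For the $\fast$ bound I iterate $\fast$ directly rather than passing to permutations once and for all. From the recursive definition of $\fast$ one computes $\fast(\eta_n)=\nu_{n-1}\,(2n-1)(2n)(2n)$, where $\nu_m:=3\,5\cdots(2m-1)\,2\,4\cdots(2m)\,1$ is a permutation of $\{1,\dots,2m\}$. Because $\fast$ sends every copy of the largest letter to the end of the word, it peels off the block $(2n-1)(2n)(2n)$ at every subsequent step, and on the remaining permutation word $\fast$ coincides with $s$ (as $\phi^{des}_{\mathbf c}$ and $\psi_{\mathbf c}$ are both the identity on permutations). Inductively this yields $\fast^{k+1}(\eta_n)=s^k(\nu_{n-1})\,(2n-1)(2n)(2n)$ for all $k\ge 1$, so $\langle\eta_n\rangle_{\fast}=1+\langle\nu_{n-1}\rangle_s$.

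The last and only delicate point is to show $\langle\nu_m\rangle_s=2m-1$; I would do this by tracking the position of the entry $1$. The key is an invariant: every permutation on the $s$-trajectory of $\nu_m$ has the shape $\gamma\cdot 1\cdot\delta$, where $\delta$ is the increasing run of the largest values (already in their final positions) and $\gamma$, a word on the value set $\{2,3,\dots\}$, ends with its own maximum $M$. Writing $\gamma=\gamma'M$ and using the suffix fact above, $s(\gamma\cdot 1\cdot\delta)=s(\gamma'M1)\,\delta=s(\gamma')\cdot 1\cdot(M\delta)$, which again has the required shape but with $\gamma$ shortened by one letter, so the entry $1$ moves exactly one position to the left per step, and the process stops (at $\Id_{2m}$) precisely when $\gamma$ is exhausted. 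Since $1$ starts in position $2m$ of $\nu_m$ and no earlier word is the identity (its first entry exceeds $1$), we conclude $\langle\nu_m\rangle_s=2m-1$, whence $\langle\eta_n\rangle_{\fast}=1+(2(n-1)-1)=2n-2$. The main effort throughout is bookkeeping: stating the normal forms for $s^k(\sigma_n)$, $s^k(\mu_m)$, and $s^k(\nu_m)$ correctly, and checking at each iteration that the reduction from $\fast$ on words to $s$ on permutations is legitimate.
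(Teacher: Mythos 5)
Your proposal is correct, and it is a genuinely more complete argument than the one in the paper: the paper's proof simply writes out the trajectories of $\eta_5$ under $\fast$ and $\slow$ and asserts that the general case is ``completely analogous,'' whereas you supply the uniform, all-$n$ argument that this assertion gestures at. Your route has two ingredients the paper does not make explicit. First, you reduce both counts to West's map $s$ — for $\slow$ via Proposition \ref{prop:permexpand} together with the injectivity of $\psi_{\mathbf c}$ on $\Image(\phi^{asc}_{\mathbf c})$, and for $\fast$ by peeling off the terminal block $(2n-1)(2n)(2n)$ and using that $\fast=s$ on permutations — and then work with the closed forms $s^2(\sigma_n)=\mu_{n-1}(2n-2)(2n-1)(2n)(2n+1)$ and $\fast(\eta_n)=\nu_{n-1}(2n-1)(2n)(2n)$, both of which I checked against the recursions and against the paper's $n=5$ computation. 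Second, your invariant for the $\fast$ side (every iterate has the shape $\gamma\cdot 1\cdot\delta$ with $\delta$ the increasing run of largest values and $\gamma$ ending in its own maximum, so that $s(\gamma'M1\,\delta)=s(\gamma')\,1\,M\delta$ and the entry $1$ moves left exactly one position per step) is a clean lemma that pins down $\langle\nu_m\rangle_s=2m-1$ exactly and explains \emph{why} $\fast$ needs $2n-2$ steps, rather than just exhibiting it; the analogous recursion $s(\mu_m)=\mu_{m-1}(2m-2)(2m-1)$ handles the $\slow$ side. The only things to make explicit in a final write-up are the trivial observations that the intermediate words are never the identity (each begins with a letter at least $2$, so the step counts are exact) and that your formula $\fast^{k+1}(\eta_n)=s^k(\nu_{n-1})(2n-1)(2n)(2n)$ in fact holds for all $k\geq 0$; neither is a gap. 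In short, the paper's approach buys brevity at the cost of an ``analogous case'' hand-wave, while yours buys a complete proof and a structural explanation, at the cost of some bookkeeping with the normal forms $\sigma_n$, $\mu_m$, $\nu_m$.
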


\begin{proof}
The proof of the theorem amounts to observing what happens to $\eta_n$ under repeated applications of $\fast$ and $\slow$. One could write out these calculations for general $n$, but we fear that doing so would only obfuscate the computations with a sea of ellipses ($\cdots$). Instead, we show the calculations for the case $n=5$; the general case is completely analogous. 

We have $\eta_5=3\,\,5\,\,7\,\,10\,\,10\,\,2\,\,4\,\,6\,\,8\,\,9\,\,1$. Now, 
\begin{alignat*}{2}
&3\,\,5\,\,7\,\,10\,\,10\,\,2\,\,4\,\,6\,\,8\,\,9\,\,1\hspace{2cm}&&3\,\,5\,\,7\,\,10\,\,10\,\,2\,\,4\,\,6\,\,8\,\,9\,\,1 \\
 &\hspace{1.75cm}\Big\downarrow\fast &&\hspace{1.75cm}\Big\downarrow\slow \\
&3\,\,5\,\,7\,\,2\,\,4\,\,6\,\,8\,\,1\,\,9\,\,10\,\,10\hspace{2cm}&&3\,\,5\,\,7\,\,10\,\,2\,\,4\,\,6\,\,8\,\,1\,\,9\,\,10 \\
 &\hspace{1.75cm}\Big\downarrow\fast &&\hspace{1.75cm}\Big\downarrow\slow \\
 &3\,\,5\,\,2\,\,4\,\,6\,\,7\,\,1\,\,8\,\,9\,\,10\,\,10\hspace{2cm}&&3\,\,5\,\,7\,\,2\,\,4\,\,6\,\,1\,\,8\,\,9\,\,10\,\,10 \\
 &\hspace{1.75cm}\Big\downarrow\fast &&\hspace{1.75cm}\Big\downarrow\slow \\
&3\,\,2\,\,4\,\,5\,\,6\,\,1\,\,7\,\,8\,\,9\,\,10\,\,10\hspace{2cm}&&3\,\,5\,\,2\,\,4\,\,1\,\,6\,\,7\,\,8\,\,9\,\,10\,\,10 \\
 &\hspace{1.75cm}\Big\downarrow\fast &&\hspace{1.75cm}\Big\downarrow\slow \\
 &2\,\,3\,\,4\,\,5\,\,1\,\,6\,\,7\,\,8\,\,9\,\,10\,\,10\hspace{2cm}&&3\,\,2\,\,1\,\,4\,\,5\,\,6\,\,7\,\,8\,\,9\,\,10\,\,10 \\
 &\hspace{1.75cm}\Big\downarrow\fast &&\hspace{1.75cm}\Big\downarrow\slow \\
&2\,\,3\,\,4\,\,1\,\,5\,\,6\,\,7\,\,8\,\,9\,\,10\,\,10\hspace{2cm}&&1\,\,2\,\,3\,\,4\,\,5\,\,6\,\,7\,\,8\,\,9\,\,10\,\,10 \\
 &\hspace{1.75cm}\Big\downarrow\fast &&\hspace{1.75cm} \\
&2\,\,3\,\,1\,\,4\,\,5\,\,6\,\,7\,\,8\,\,9\,\,10\,\,10\hspace{2cm}&& \\
 &\hspace{1.75cm}\Big\downarrow\fast &&\hspace{1.75cm} \\
&2\,\,1\,\,3\,\,4\,\,5\,\,6\,\,7\,\,8\,\,9\,\,10\,\,10\hspace{2cm}&& \\
 &\hspace{1.75cm}\Big\downarrow\fast &&\hspace{1.75cm}
\\
&1\,\,2\,\,3\,\,4\,\,5\,\,6\,\,7\,\,8\,\,9\,\,10\,\,10\hspace{2cm}&& \hspace{4cm}. 
 \end{alignat*}
\end{proof}

Say a word $w$ is \emph{exceptional} if $\langle w\rangle_{\fast}>\langle w\rangle_{\slow}$. Let $\mathcal E_m$ be the set of exceptional normalized words of length $m$. It turns out that $\mathcal E_m=\emptyset$ when $m\leq 6$. We have used a computer to find that \[\mathcal E_7=\{3662451,3664251,6362451,6364251\}.\] The sets $\mathcal E_8$ and $\mathcal E_9$ have $172$ and $5001$ elements, respectively. Furthermore, we have checked that each element of $\mathcal E_8$ contains one of the words in $\mathcal E_7$ as a pattern. We have also found that there are $72$ words $w$ of length $9$ (but no shorter words) that satisfy $\langle w\rangle_{\fast}=\langle w\rangle_{\slow}+2$. These observations lead to a host of questions concerning exceptional words, many of which we list in Section $6$.

\section{Trees and Valid Hook Configurations}
Given a set $X$ of positive integers, a \emph{decreasing plane tree on $X$} is a rooted plane tree in which the vertices are labeled with the elements of $X$ (where each label is used exactly once) such that each nonroot vertex has a label that is strictly smaller than the label of its parent.
A \emph{binary plane tree} either is empty or consists of a root vertex along with an ordered pair of subtrees (the left and right subtrees) that are themselves binary plane trees. Note that if a vertex in a binary plane tree has a single child, we make a distinction between whether the child is a left child or a right child. Figure \ref{Fig2} shows two different decreasing binary plane trees on $\{1,2,\ldots,7\}$.  

\begin{figure}[h]
\begin{center} 
\includegraphics[height=2cm]{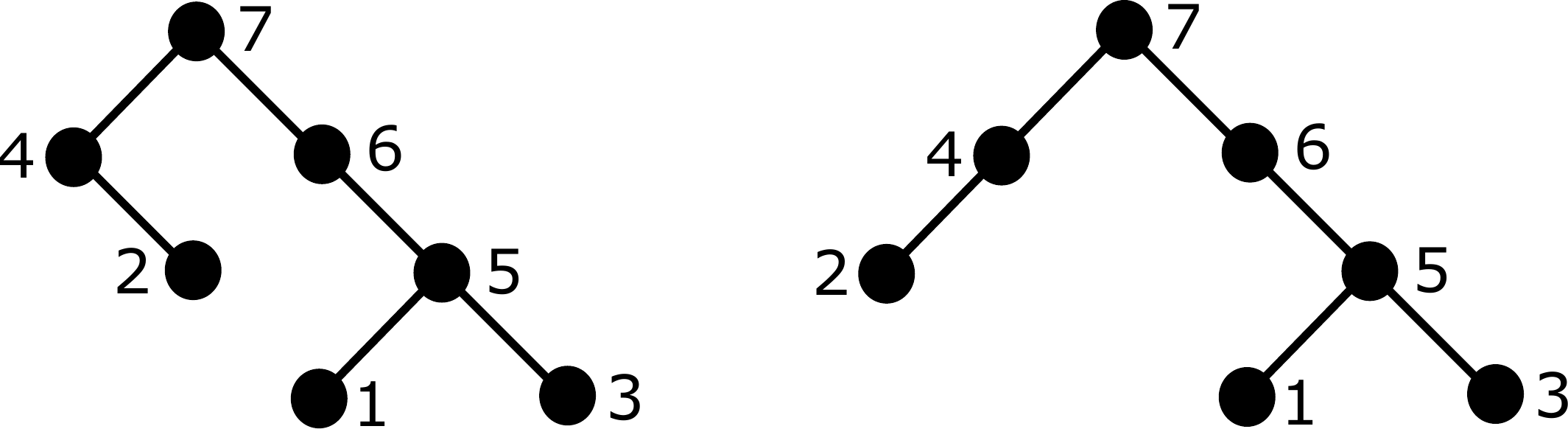}
\end{center}
\caption{Two different decreasing binary plane trees on $\{1,2,\ldots,7\}$.}\label{Fig2}
\end{figure}

We can use a \emph{tree traversal} to read the labels of a decreasing binary plane tree. One tree traversal, called the \emph{in-order reading} (sometimes called the \emph{symmetric order reading}), is obtained by reading the left subtree of the root in in-order, then reading the label of the root, and finally reading the right subtree of the root in in-order. Let $S(\tau)$ denote the in-order reading of a decreasing binary plane tree $\tau$. It turns out that $S$ gives a bijection from the set of decreasing binary plane trees on a set $X$ to the set of permutations of $X$ \cite{Stanley}. Under this bijection, the tree on the left in Figure \ref{Fig2} corresponds to the permutation $4276153$, and the tree on the right corresponds to the permutation $2476153$. 

Another tree traversal, called the \emph{postorder reading}, is defined for arbitrary decreasing plane trees. We read a decreasing plane tree in postorder by reading the subtrees of the root from left to right (each in postorder) and then reading the label of the root. For example, both trees in Figure \ref{Fig2} have postorder $2413567$. Let $P(\tau)$ denote the postorder reading of a decreasing plane tree $\tau$. The fundamental result \cite[Corollary 8.22]{Bona} that links West's stack-sorting map to decreasing binary plane trees is the fact that \[s=P\circ S^{-1}.\] In other words, if we are given a permutation $\pi$, then $s(\pi)$ is the postorder reading of the unique decreasing binary plane tree whose in-order reading is $\pi$. 

We can generalize the notion of a decreasing plane tree to that of a weakly decreasing plane tree. If $\mathcal X$ is a multiset of positive integers, then a \emph{weakly decreasing plane tree on $\mathcal X$} is a rooted plane tree labeled with the elements of $\mathcal X$ (where each label appears exactly as many times as it appears in $\mathcal X$) such that each nonroot vertex has a label that is at most as large as the label of its parent. The definitions of in-order and postorder readings extend in the obvious ways to weakly decreasing binary plane trees. As before, let $S(\tau)$ and $P(\tau)$ denote the in-order and postorder readings, respectively, of a weakly decreasing (binary) plane tree $\tau$. Let $\mathcal L$ be the set of weakly decreasing binary plane trees in which a vertex and its right child cannot have the same label (i.e., whenever a vertex has the same label as its parent, it must be a left child). Similarly, let $\mathcal R$ be the set of weakly decreasing binary plane trees in which a vertex cannot have the same label as its left child. 

The bijection between permutations and decreasing binary plane trees extends to the context of words. More precisely, the in-order reading $S$ provides a bijection from $\mathcal L$ to the set $\mathcal W$ of all words (that is, the set  of finite words over $\mathbb N$). Similarly, $S$ provides a bijection from $\mathcal R$ to $\mathcal W$. To be completely formal, we write \[S_{\mathcal L}:\mathcal L\to\mathcal{W} \quad\text{and}\quad S_{\mathcal R}:\mathcal R\to\mathcal W\]
for these bijections. The inverse maps $S_{\mathcal R}^{-1}$ and $S_{\mathcal L}^{-1}$ are defined recursively as follows. Given a word $w$, we can write $w=AnB$, where $n$ is the largest letter in $w$ and $A$ does not contain the letter $n$ (so that all copies of the letter $n$ in $w$ appear in the subword $nB$). We then let $S_{\mathcal R}^{-1}(w)$ be the tree in which the root vertex has label $n$ and in which the left and right subtrees of the root are $S_{\mathcal R}^{-1}(A)$ and $S_{\mathcal R}^{-1}(B)$, respectively. Similarly, we can write $w=CnD$, where $n$ is the largest letter in $w$ and $D$ does not contain the letter $n$ (so that all copies of the letter $n$ in $w$ appear in the subword $Cn$). Let $S_{\mathcal L}^{-1}(w)$ be the tree in which the root vertex has label $n$ and in which the left and right subtrees of the root are $S_{\mathcal L}^{-1}(C)$ and $S_{\mathcal L}^{-1}(D)$, respectively. We omit the straightforward proof that these maps are in fact inverses of $S_{\mathcal R}$ and $S_{\mathcal L}$, respectively. Figure \ref{Fig3} show the trees $S_{\mathcal R}^{-1}(w)$ and $S_{\mathcal L}^{-1}(w)$ when $w=23123311$.

\begin{figure}[t]
\begin{center} 
\includegraphics[height=2.5cm]{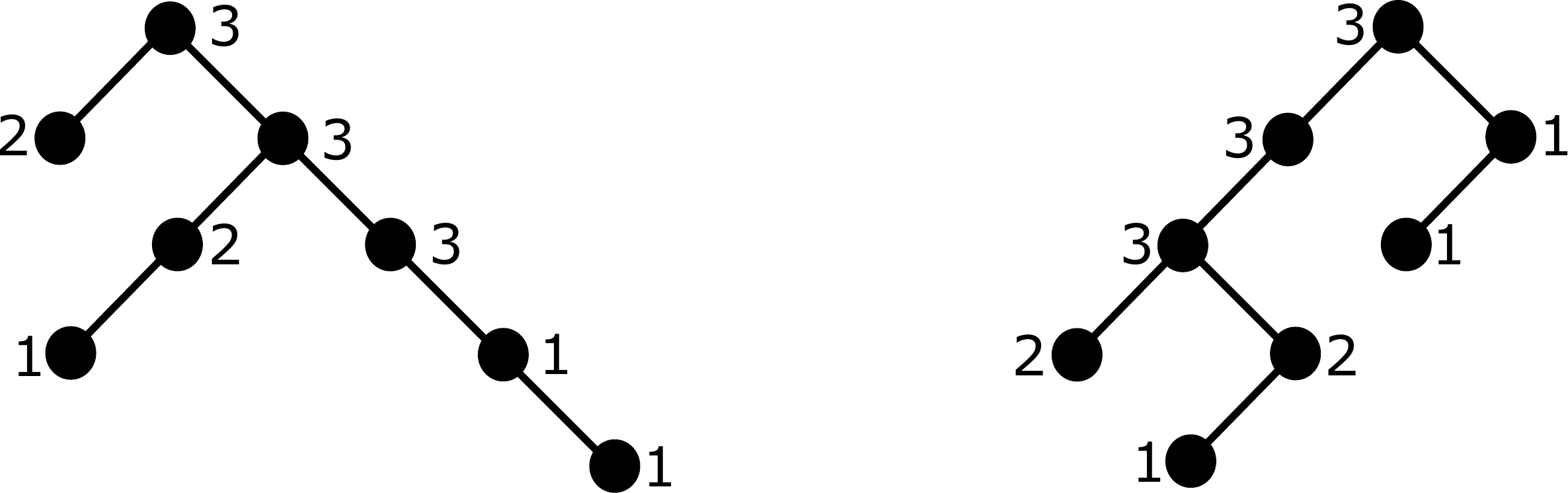}
\end{center}
\caption{The trees $S_{\mathcal R}^{-1}(23123311)$ (left) and $S_{\mathcal L}^{-1}(23123311)$ (right).}\label{Fig3}
\end{figure}

The motivation for defining the sets $\mathcal L$ and $\mathcal R$ comes from the following lemma.

\begin{lemma}\label{lem:treecorrespondence}
The maps $\fast$ and $\slow$ satisfy \[\fast=P \circ S_{\mathcal R}^{-1}\quad\text{and}\quad\slow=P \circ S_{\mathcal L}^{-1}.\]
\end{lemma}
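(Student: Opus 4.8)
The plan is to prove both identities by a single strong induction on the length of the word $w$, mirroring the recursive definitions of $\fast$, $\slow$, $S_{\mathcal R}^{-1}$, and $S_{\mathcal L}^{-1}$. The base case $w=\varepsilon$ is immediate: $S_{\mathcal R}^{-1}(\varepsilon)$ and $S_{\mathcal L}^{-1}(\varepsilon)$ are both the empty tree, whose postorder reading is $\varepsilon$, matching $\fast(\varepsilon)=\slow(\varepsilon)=\varepsilon$. Before the inductive step I would record the one structural fact about postorder that is needed: for a weakly decreasing binary plane tree $T$ with root label $r$, left subtree $T_L$, and right subtree $T_R$, one has $P(T)=P(T_L)\,P(T_R)\,r$, with the convention that the postorder of the empty tree is $\varepsilon$.

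For the inductive step, let $w$ be nonempty with largest letter $n$, and write $w=A_1nA_2n\cdots nA_{k+1}$ with each $A_i$ avoiding $n$ and $k\geq 1$. The key is to line up the two decompositions with the recursions defining $S_{\mathcal R}^{-1}$ and $S_{\mathcal L}^{-1}$. In the definition of $S_{\mathcal R}^{-1}$, the splitting $w=AnB$ with $A$ avoiding $n$ forces $A=A_1$ and $B=A_2nA_3n\cdots nA_{k+1}$; since $A_1$ and $B$ are strictly shorter than $w$, the inductive hypothesis together with the postorder identity gives $P(S_{\mathcal R}^{-1}(w))=\fast(A_1)\,\fast(B)\,n$, and unwinding the recursive definition of $\fast$ on $B$ (which contains $n$ exactly $k-1$ times) turns this into $\fast(A_1)\fast(A_2)\cdots\fast(A_{k+1})n^{k}=\fast(w)$. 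Symmetrically, in the definition of $S_{\mathcal L}^{-1}$ the splitting $w=CnD$ with $D$ avoiding $n$ forces $C=A_1nA_2n\cdots nA_k$ and $D=A_{k+1}$, so the inductive hypothesis gives $P(S_{\mathcal L}^{-1}(w))=\slow(C)\,\slow(A_{k+1})\,n$, and the recursive definition of $\slow$ on $C$ (which contains $n$ exactly $k-1$ times) rewrites this as $\slow(A_1)\slow(A_2)n\cdots n\slow(A_k)n\slow(A_{k+1})n=\slow(w)$.

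I expect the only point requiring genuine care to be this bookkeeping: the two trees peel off copies of $n$ asymmetrically — $S_{\mathcal R}^{-1}$ pushes every copy of $n$ but the first into the right subtree, whereas $S_{\mathcal L}^{-1}$ pushes every copy but the last into the left subtree — and one must check that these match the correspondingly asymmetric recursions in the definition of $\fast$ and $\slow$, paying attention to the degenerate case $k=1$ (where $B$, respectively $C$, contains no $n$ and the recursion bottoms out immediately, so that $\fast(B)=\fast(A_2)$, respectively $\slow(C)=\slow(A_1)$). Once the decompositions are correctly matched, the rest is a routine substitution, and no serious obstacle remains.
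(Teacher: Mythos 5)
Your argument is correct and is exactly the routine unwinding of the recursive definitions that the paper dismisses as immediate: the paper's proof of this lemma is the single sentence that it follows from the definitions of $S_{\mathcal R}^{-1}$, $S_{\mathcal L}^{-1}$, $P$, $\fast$, and $\slow$. Your induction on word length, with the careful matching of $w=A_1nA_2n\cdots nA_{k+1}$ to the splittings $w=AnB$ and $w=CnD$ (including the $k=1$ case), simply makes that immediacy explicit.
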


\begin{proof}
This is immediate from the definitions of $S_{\mathcal R}^{-1}$, $S_{\mathcal L}^{-1}$, $P$, $\fast$, and $\slow$.  
\end{proof}

As mentioned in the introduction, much of the theory of the stack-sorting map $s$ can be phrased in terms of preimages of permutations. Lemma \ref{lem:treecorrespondence} provides a link between weakly decreasing binary plane trees and the maps $\fast$ and $\slow$; it is this link that allows us to study preimages of permutations under $\fast$ and $\slow$. Because $S_{\mathcal R}:\mathcal R\to\mathcal W$ is a bijection, it follows from Lemma \ref{lem:treecorrespondence} that $|\fast^{-1}(w)|$ is the number of trees in $\mathcal R$ with postorder $w$. Similarly, $|\slow^{-1}(w)|$ is the number of trees in $\mathcal L$ with postorder $w$. 

In \cite{Defant}, the first author introduced new combinatorial objects called ``valid hook configurations.''  He showed how to use these objects to compute the number of decreasing plane trees of a prescribed type with a given postorder. Applying this concept to the specific case of decreasing binary plane trees, one obtains a method for computing the fertility $|s^{-1}(\pi)|$ of a permutation $\pi$. In fact, one can even count the elements of $s^{-1}(\pi)$ according to their number of descents and number of valleys. 

In the rest of this section, we discuss how to define valid hook configurations for words. Following \cite{Defant}, one could develop a general method for counting weakly decreasing plane trees of a prescribed type with a given postorder. For example, the 	``decreasing $\mathbb N$-trees" discussed in that article generalize naturally to ``weakly decreasing $\mathbb N$-trees'', and the method for counting decreasing $\mathbb N$-trees with a given postorder extends to the setting of weakly decreasing $\mathbb N$-trees with very little difficulty. Because our interest lies with the stack-sorting maps $\fast$ and $\slow$, we will not concern ourselves with very general types of trees. Instead, we will focus on counting trees in $\mathcal L$ and $\mathcal R$ with a given postorder reading. It turns out that this problem is more difficult (and hence more interesting) than the problem of counting weakly decreasing $\mathbb N$-trees. Indeed, we will see that we must place additional conditions on our valid hook configurations in order to ensure that we count trees in either $\mathcal L$ or $\mathcal R$.   

The \emph{plot} of a word $w=w_1\cdots w_m$ is the graph depicting the point $(i,w_i)$ for all $i\in\{1,\ldots,m\}$. For example, the left image in Figure \ref{Fig4} shows the plot of the word $21133$. A \emph{hook} of $w$ is drawn by selecting two points $(i,w_i)$ and $(j,w_j)$ with $i<j$ and $w_i\leq w_j$. We draw a vertical line segment from $(i,w_i)$ to $(i,w_j)$ and then connect it to a horizontal line segment from $(i,w_j)$ to $(j,w_j)$. The points $(i,w_i)$ and $(j,w_j)$ are respectively called the \emph{southwest endpoint} and the \emph{northeast endpoint} of the hook. The right image in Figure \ref{Fig4} shows two hooks drawn on the plot of $21133$. One hook has southwest endpoint $(1,2)$ and northeast endpoint $(5,3)$. The other has southwest endpoint $(4,3)$ and northeast endpoint $(5,3)$. 

\begin{figure}[t]
\begin{center}
\includegraphics[height=1.5cm]{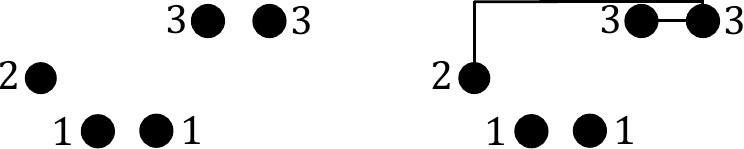}
\end{center}  
\caption{The left image shows the plot of $21133$. The right image shows this same plot along with two hooks drawn on it. For the purpose of drawing the hooks in the right image clearly, we have allowed one of the hooks to pass above the point $(4,3)$. This is simply to avoid confusion. }\label{Fig4}
\end{figure}

Consider a word $w=w_1\cdots w_m$. For our purposes, it is most convenient to define a \emph{descent} of $w$ to be an index $d\in\{1,\ldots,m-1\}$ such that $w_d\geq w_{d+1}$. 
If $d$ is a descent of $w$, we call the point $(d,w_d)$ a \emph{descent top} of $w$. Let $H$ be a hook of $w$ with southwest endpoint $(i,w_i)$ and northeast endpoint $(j,w_j)$. We say $H$ is a \emph{descent hook} if $i$ is a descent of $w$. We say $H$ is \emph{horizontal} if $w_i=w_j$, and we say $H$ is \emph{small} if $j=i+1$. For example, both of the hooks shown in the right image of Figure \ref{Fig4} are descent hooks. The hook in that figure with southwest endpoint $(4,3)$ and northeast endpoint $(5,3)$ is both horizontal and small. The other hook is neither horizontal nor small.    

Let $(H_1,\ldots,H_k)$ be a tuple of hooks of the word $w=w_1\cdots w_m$. Let $(i_u,w_{i_u})$ and $(j_u,w_{j_u})$ denote the southwest and northeast endpoints, respectively, of the hook $H_u$. We say the tuple $(H_1,\ldots,H_k)$ is a \emph{valid hook configuration} of $w$ if it satisfies the following properties: 

\begin{enumerate}[1.]
\item We have $i_1<\cdots<i_k$.
\item Each descent top of $w$ is the southwest endpoint of a hook. 
\item If $(j,w_j)$ is the northeast endpoint of any hook, then $(j,w_j)$ is both the northeast endpoint of a descent hook and the northeast endpoint of a small hook (where these hooks could be the same). 
\item For all $u,v\in\{1,\ldots,k\}$, either the intervals $(i_u,j_u)$ and $(i_v,j_v)$ are disjoint or one is contained in the other. 
\end{enumerate}

These four conditions have some immediate consequences for a valid hook configuration \linebreak $(H_1,\ldots,H_k)$ of a word $w$; they are worth keeping in mind if one wishes to work with valid hook configurations. The first consequence, which is immediate from condition 1, is that a point in the plot of $w$ can be the southwest endpoint of at most one hook. The second consequence, which follows from conditions 2 and 4, is that a hook cannot pass strictly below a point $(a,w_a)$ in the plot of $w$. The third consequence, which also follows from condition 4, is that two hooks cannot intersect each other perpendicularly except at a common endpoint. By this, we mean that the vertical part of a hook cannot intersect the horizontal part of a different hook unless the intersection occurs at a point that is a common endpoint of the two  hooks. Figure \ref{Fig5} shows three examples of these forbidden situations. Figure \ref{Fig6} depicts a valid hook configuration of the word $211232124567$.

\begin{figure}[t]
\begin{center}
\includegraphics[height=3.5cm]{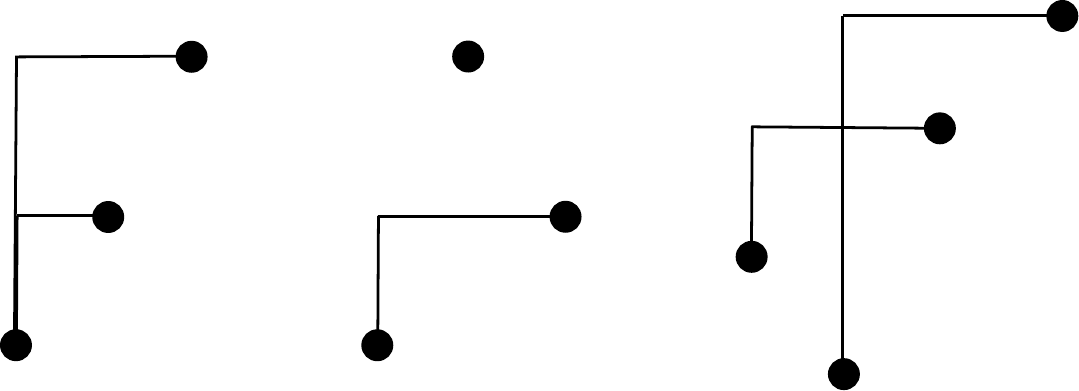}
\end{center}  
\caption{Three placements of hooks that are forbidden in a valid hook configuration.}\label{Fig5}
\end{figure}

\begin{figure}[t]
\begin{center}
\includegraphics[height=3.5cm]{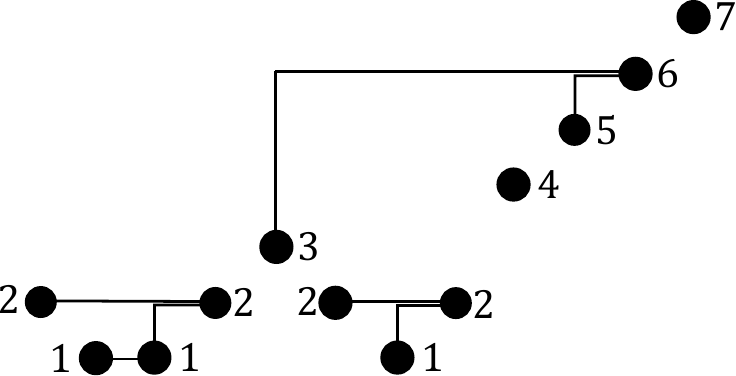}
\end{center}  
\caption{A valid hook configuration of $211232124567$.}\label{Fig6}
\end{figure}

A valid hook configuration of a word $w$ induces a coloring of the plot of $w$. To color the plot, first draw a ``sky" over the entire diagram and color the sky blue. Assign arbitrary distinct colors other than blue to the hooks in the valid hook configuration. Roughly speaking, each point should ``look up" and receive the color that it sees. If a point does not see any hooks, then it sees the sky and receives the color blue. More formally, suppose we wish to color a point $(i,w_i)$. Consider the set of all hooks that either lie above $(i,w_i)$ or have northeast endpoint $(i,w_i)$. If this set is empty, color the point $(i,w_i)$ blue. Otherwise, choose the hook from this set that has the rightmost southwest endpoint, and give $(i,w_i)$ the color of that hook. Note that we make the convention that the endpoints of a hook do not lie below that hook. Figure \ref{Fig7} shows the coloring of the plot of $211232124567$ induced by the valid hook configuration in Figure \ref{Fig6}. Note that the points $(1,2)$, $(5,3)$, and $(12,7)$ are blue because they do not lie below any hooks and are not northeast endpoints of any hooks. 

\begin{figure}[t]
\begin{center}
\includegraphics[height=4.55cm]{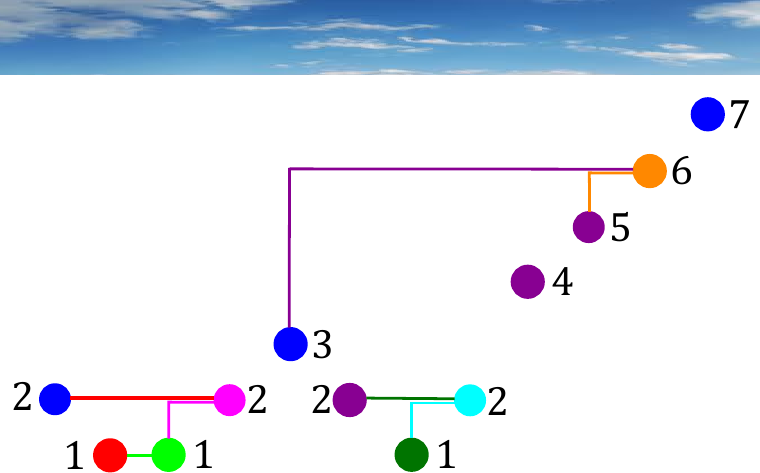}
\end{center}  
\caption{The coloring of the plot of $211232124567$ induced by the valid hook configuration of Figure \ref{Fig6}.}\label{Fig7}
\end{figure}

\begin{remark}\label{Rem1}
It is straightforward to check that a northeast endpoint of a hook cannot receive the same color as any other point. 
\end{remark}

We have shown that each valid hook configuration $\mathscr H=(H_1,\ldots,H_k)$ of a word $w=w_1\cdots w_m$ induces a coloring of the plot of $w$. From this coloring, we obtain an integer composition $q^{\mathscr H}=(q_0,\ldots,q_k)$ of $m$. For each $i>0$, we simply define $q_i$ to be the number of points that are given the same color as the hook $H_i$. We also let $q_0$ be the number of blue points in the induced coloring (i.e., the number of points that see the sky when they look up). We say the valid hook configuration $\mathscr H$ \emph{induces} the composition $q^{\mathscr H}$. A \emph{valid composition of $w$} is a composition that is induced by a valid hook configuration of $w$. 

As mentioned above, we are going to restrict our attention to counting trees in the sets $\mathcal L$ and $\mathcal R$. In order to do so, we must place additional constraints on the valid hook configurations we allow. For example, we will see later that the hooks in a valid hook configuration of a word $w$ become edges in the trees with postorder $w$. Since the trees in $\mathcal L$ and $\mathcal R$ are binary, it is natural to define a \emph{binary valid hook configuration} of a word $w$ to be a valid hook configuration of $w$ in which each northeast endpoint of a hook is the northeast endpoint of at most two hooks. For example, the valid hook configuration in Figure \ref{Fig6} is binary. Let $\mathcal H_{\mathcal R}(w)$ denote the set of binary valid hook configurations of a word $w$ in which every horizontal hook is small. Let $\mathcal H_{\mathcal L}(w)$ denote the set of all binary valid hook configurations of $w$ in which no small horizontal hook has the same northeast endpoint as another hook. 

We can finally state our main theorem connecting valid hook configurations with the problem of determining $|\fast^{-1}(w)|$ and $|\slow^{-1}(w)|$. Let $C_n=\frac{1}{n+1}{2n\choose n}$ denote the $n^\text{th}$ Catalan number. Given an integer composition $q=(q_0,\ldots,q_k)$, let \[C_q=\prod_{t=0}^kC_{q_t}.\] Recall that $q^{\mathscr H}$ denotes the composition induced by the valid hook configuration $\mathscr H$. 

\begin{theorem}\label{mainVHCthm}
For every word $w$, we have \[|\fast^{-1}(w)|=\sum_{\mathscr H\in\mathcal H_{\mathcal R}(w)}C_{q^{\mathscr H}}\quad\text{and}\quad|\slow^{-1}(w)|=\sum_{\mathscr H\in\mathcal H_{\mathcal L}(w)}C_{q^{\mathscr H}}.\]
\end{theorem}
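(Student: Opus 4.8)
The plan is to first apply Lemma~\ref{lem:treecorrespondence}. Since $\fast = P\circ S_{\mathcal R}^{-1}$ and $S_{\mathcal R}\colon\mathcal R\to\mathcal W$ is a bijection, $|\fast^{-1}(w)|$ is the number of trees $T\in\mathcal R$ with $P(T)=w$; likewise $|\slow^{-1}(w)|$ is the number of $T\in\mathcal L$ with $P(T)=w$. So it suffices to prove
\[
\#\{T\in\mathcal R:P(T)=w\}=\sum_{\mathscr H\in\mathcal H_{\mathcal R}(w)}C_{q^{\mathscr H}}\qquad\text{and}\qquad\#\{T\in\mathcal L:P(T)=w\}=\sum_{\mathscr H\in\mathcal H_{\mathcal L}(w)}C_{q^{\mathscr H}}.
\]
I would carry out the $\mathcal R$ case and then observe that the $\mathcal L$ case follows from the mirror-image argument with left and right children interchanged; the difference between the conditions defining $\mathcal H_{\mathcal R}(w)$ and $\mathcal H_{\mathcal L}(w)$ is exactly the difference between the conditions defining $\mathcal R$ and $\mathcal L$, namely whether a vertex may share a label with its left or with its right child.

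\textbf{The bijection.} Writing $\mathcal B_q$ for the set of unlabeled binary plane trees on $q$ vertices, so $|\mathcal B_q|=C_q$, the core of the proof is an explicit bijection
\[
\{T\in\mathcal R:P(T)=w\}\;\longleftrightarrow\;\bigsqcup_{\mathscr H\in\mathcal H_{\mathcal R}(w)}\;\prod_{t=0}^{k(\mathscr H)}\mathcal B_{q_t^{\mathscr H}},
\]
whose right side has cardinality $\sum_{\mathscr H}C_{q^{\mathscr H}}$. Given $T\in\mathcal R$ with $P(T)=w=w_1\cdots w_m$, label the vertices $v_1,\ldots,v_m$ in postorder, so $v_i$ has label $w_i$. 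A case analysis of postorder successors shows that each descent top $(d,w_d)$ of $w$ arises from a left-child edge of $T$ and thus determines a hook with southwest endpoint $(d,w_d)$; condition~4 (noncrossing) then records the nesting of subtrees, while conditions~2 and~3 together with the binary and horizontal-hook restrictions encode, respectively, that all descent tops are covered, that $T$ is binary, and that a child sharing its parent's label must be a right child (so a horizontal hook can occur only in the small form, which is exactly what $\mathcal H_{\mathcal R}(w)$ permits). One then checks that in the induced coloring of the plot of $w$, each color class is the set of postorder positions of the vertices of one of the subtrees obtained by deleting the ``skeleton'' edges corresponding to the hooks, so that the $t$th such subtree has $q_t^{\mathscr H}$ vertices; recording its plane-tree shape finishes the forward map. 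Conversely, a configuration $\mathscr H\in\mathcal H_{\mathcal R}(w)$ and a choice of shapes $B_0,\ldots,B_{k(\mathscr H)}$ assemble into a tree by grafting $B_t$ onto the skeleton, after which the constraint $P(T)=w$ forces every vertex label; the defining properties of $\mathcal H_{\mathcal R}(w)$ are precisely what guarantee that this forced labeling is weakly decreasing and never puts equal labels at a vertex and its left child, so $T\in\mathcal R$. One verifies the two maps are mutually inverse. Alternatively, the same identity can be proved by induction on $m$ using $w=A_1nA_2n\cdots nA_{k+1}$ with $n=\max(w)$: both sides vanish unless all copies of $n$ lie at the end of $w$, and when $w=A_1n^k$ a tree in $\mathcal R$ with postorder $w$ is equivalent to a factorization of $A_1$ into $k+1$ possibly-empty words together with a tree in $\mathcal R$ for each piece, while $\mathcal H_{\mathcal R}(w)$ with its $C_{q^{\mathscr H}}$-weights admits a parallel decomposition; the horizontal-hook restriction is exactly what produces the matching vanishing in the excluded case.

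\textbf{Main obstacle.} I expect the real work, and the only part that genuinely goes beyond the permutation case of \cite{Defant,Defant2}, to be two local verifications: on the forward side, that the hooks read off from $T$ satisfy the noncrossing condition~4 and the northeast-endpoint condition~3, where one must track at each relevant vertex whether a repeated label sits at a left or a right child; and on the reverse side, that the labeling forced by $P(T)=w$ is always weakly decreasing and never violates the left-child constraint of $\mathcal R$. Both rest on the single new phenomenon that equal letters of $w$ correspond to an edge of $T$ whose two endpoints carry the same label, and it is exactly this phenomenon that the small/horizontal-hook refinements separating $\mathcal H_{\mathcal R}(w)$ from $\mathcal H_{\mathcal L}(w)$ are built to handle. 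Once these local facts are established, assembling the global bijection---or carrying out the inductive step---is routine bookkeeping.
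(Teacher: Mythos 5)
Your treatment of the $\mathcal R$ half follows the paper's route: hooks become the ``skeleton'' edges of the tree, the color classes of the induced coloring become the vertex sets of the remaining subtrees (whose shapes are counted by $C_{q^{\mathscr H}}$), and the conditions defining $\mathcal H_{\mathcal R}(w)$ are what force the assembled tree to be binary, weakly decreasing, and free of equal-label left children. The genuine gap is your claim that the $\mathcal L$ identity ``follows from the mirror-image argument with left and right children interchanged.'' Reflection does carry $\mathcal L$ bijectively onto $\mathcal R$, but it does not preserve the postorder reading: if $T$ has left and right subtrees $T_L,T_R$ and root label $n$, then $P(T)=P(T_L)P(T_R)n$ while the reflection has postorder $P(T_R^{\,r})P(T_L^{\,r})n$, which is a different word in general (e.g.\ the tree with root $3$, left child $1$, right child $2$ has postorder $123$, but its mirror has postorder $213$). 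So the count of $\mathcal L$-trees with postorder $w$ cannot be read off from the count of $\mathcal R$-trees with postorder $w$ by any left-right symmetry, and indeed the plot of $w$ and the hook conditions have no such symmetry either: $\mathcal H_{\mathcal R}(w)$ forbids non-small horizontal hooks but allows a small horizontal hook to share its northeast endpoint with a second hook, whereas $\mathcal H_{\mathcal L}(w)$ allows non-small horizontal hooks and instead forbids a small horizontal hook from sharing its northeast endpoint. These are not mirror conditions of one another, so the assertion that ``the difference between $\mathcal H_{\mathcal R}(w)$ and $\mathcal H_{\mathcal L}(w)$ is exactly the difference between $\mathcal R$ and $\mathcal L$'' does not constitute an argument.

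The paper handles $\slow$ quite differently: it runs the \emph{same} insertion algorithm on a pair $(\mathscr H,\mathscr T)$ with $\mathscr H\in\mathcal H_{\mathcal L}(w)$ (so equal-label children produced by the algorithm are attached as \emph{right} children), and then ``swings'' every right child that shares its parent's label over to the left. The crux is the claim that such a child is its parent's only child, which is proved using condition 3 of valid hook configurations (the northeast endpoint must also be the northeast endpoint of a small hook, forcing the offending hook to be small and horizontal) together with the defining property of $\mathcal H_{\mathcal L}(w)$ (that small horizontal hook is then the only hook at that endpoint, and Remark \ref{Rem1} rules out Case-2 children). Without this post-processing step and its justification, the second identity is not established; relatedly, your statement that ``each descent top arises from a left-child edge of $T$'' is already false for words (for $w=22$ the unique tree in $\mathcal R$ has the second $2$ as a \emph{right} child), which is a symptom of the same equal-label phenomenon your mirror shortcut glosses over. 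Your sketched inductive alternative is only set up for the $\fast$/$\mathcal R$ case (the key vanishing statement about the maximal letters sitting at the end of $w$ fails for $\slow$), so it does not repair the $\mathcal L$ half either.
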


The proof of Theorem \ref{mainVHCthm} requires the following simple yet important lemma. Informally, this lemma states that the points in each color class induced by a valid hook configuration are strictly increasing in height from left to right.  

\begin{lemma}\label{lem:colorsincrease}
Let $\mathscr H$ be a valid hook configuration of a word $w=w_1\cdots w_m$. Suppose the points $(i,w_i)$ and $(j,w_j)$ are given the same color in the coloring of the plot of $w$ induced by $\mathscr H$. If $i<j$, then $w_i<w_j$.  
\end{lemma}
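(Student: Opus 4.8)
The plan is to prove the claim by induction on the number $k$ of hooks in $\mathscr{H}$, exploiting the recursive structure of the coloring. The base case $k=0$ is vacuous, since then every point is blue and we'd need to argue that the entire word is weakly increasing when it has no descents—but wait, that's exactly the content of condition 2: if $w$ has a descent $d$, then $(d,w_d)$ must be the southwest endpoint of a hook, contradicting $k=0$. So with $k=0$ the word is strictly increasing and the claim is immediate.

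For the inductive step, consider the hook $H_u$ among $H_1,\ldots,H_k$ whose southwest endpoint $(i_u,w_{i_u})$ is rightmost, i.e. $u = k$ (by condition 1, $i_1 < \cdots < i_k$, so $H_k$ has the rightmost southwest endpoint). By condition 4, the interval $(i_k, j_k)$ is either disjoint from or contains or is contained in each other $(i_v,j_v)$; since $i_k$ is maximal among southwest endpoints, no other interval can contain $(i_k,j_k)$ unless... actually I need to be careful—$(i_v, j_v) \supseteq (i_k, j_k)$ is possible with $i_v < i_k$. Let me instead take $H_k$ and observe that the color class of $H_k$ consists of exactly those points $(a, w_a)$ with $i_k \le a \le j_k$ that are not "blocked" from seeing $H_k$ by some hook nested strictly inside $(i_k, j_k)$ and lying above $H_k$. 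The key point is that, by condition 3, the northeast endpoint $(j_k, w_{j_k})$ is the northeast endpoint of a small hook, so in particular $j_k = i' + 1$ for the southwest endpoint $(i', w_{i'})$ of that small hook, and moreover (condition 3 again) $(j_k, w_{j_k})$ is a descent-hook northeast endpoint, forcing $w_{i_k} \ge w_{i_k + 1}$ if the descent hook is $H_k$ itself.

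**The core argument**: restrict attention to the subword lying in the interval $[i_k, j_k]$, and within it further remove the "interiors" of all hooks nested strictly below $H_k$—that is, contract each maximal nested sub-configuration to a single point. What remains of the color class of $H_k$ is a sequence of points; I claim consecutive ones are strictly increasing. This uses conditions 2 and 4 together: if two consecutive points $(a, w_a), (b, w_b)$ of the $H_k$-color class had $w_a \ge w_b$ with $a < b$, then $a$ would be a descent (if $b = a+1$) or there would be a descent top strictly between them that is not covered by a nested hook, contradicting the coloring assignment (that descent top would have to be a southwest endpoint of a hook nested below $H_k$ by condition 4, hence its color would differ from $H_k$'s). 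For points in the same color class that are not consecutive in the plot, strict increase follows by chaining. Then for color classes other than $H_k$'s, apply the inductive hypothesis: deleting $H_k$ from $\mathscr{H}$ and appropriately modifying $w$ (removing the points in $H_k$'s color class, or rather viewing the nested structure) yields a valid hook configuration with $k-1$ hooks whose induced coloring agrees with $\mathscr{H}$'s on the remaining classes.

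**The main obstacle** will be setting up the bookkeeping for the induction cleanly—specifically, identifying precisely which hook to remove and verifying that what remains after removal is genuinely a valid hook configuration (all four conditions), and that its induced coloring restricts correctly. The subtlety is that removing a hook can merge two color classes or expose points to the sky, so one must remove the hook whose southwest endpoint is rightmost (equivalently, a hook that is "innermost" in a suitable sense, covering no descent tops other than via its own southwest endpoint) to ensure the remaining configuration still satisfies condition 2. An alternative, possibly cleaner route that avoids this: argue directly that if $(i, w_i)$ and $(j, w_j)$ share the color of hook $H_u$ with $i < j$, then walking along the plot from $i$ to $j$, every time we encounter a descent we must "dip under" a hook nested below $H_u$ and come back out at a strictly higher northeast endpoint (by condition 3, the northeast endpoint sits strictly above where we entered, since a descent hook's northeast endpoint exceeds its southwest endpoint's height), so the net height change is strictly positive. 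I would pursue this direct version first and fall back on the induction only if the "dip under and come out higher" claim resists a clean proof.
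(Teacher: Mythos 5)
Neither of your two routes closes the actual gap, which is the same in both: where does the \emph{strict} inequality come from? In your ``direct'' route the pivotal parenthetical --- that when you dip under a descent hook you re-emerge strictly higher ``since a descent hook's northeast endpoint exceeds its southwest endpoint's height'' --- is simply false. A hook only requires $w_i\leq w_j$, descents are weak ($w_d\geq w_{d+1}$), and horizontal descent hooks (equal endpoint heights) are not a corner case to be waved away: small horizontal hooks are exactly the objects the paper's sets $\mathcal H_{\mathcal R}(w)$ and $\mathcal H_{\mathcal L}(w)$ are built around (see Figure \ref{Fig4}). So ``net height change is strictly positive'' has no justification as written; the strictness in the lemma does not come from height gained along hooks but from a color-mismatch argument. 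Your induction route has a second genuine problem that you flag but misdiagnose: deleting the hook $H_k$ with rightmost southwest endpoint does \emph{not} keep the remaining tuple a valid hook configuration. Its southwest endpoint may itself be a descent top (breaking condition 2, since by condition 1 it is the only hook there), and its northeast endpoint may be the unique small hook or unique descent hook required at that point by condition 3 (e.g.\ in $1212$ with hooks $(2,2)\!\to\!(4,2)$ and $(3,1)\!\to\!(4,2)$, removing the hook with rightmost southwest endpoint kills condition 3). Moreover the ``consecutive points of the $H_k$-class increase'' step, which you justify only by gesturing at an uncovered descent top, is precisely where all the work lies, so the induction never gets off the ground.

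For comparison, the paper's proof is a short direct contradiction with no induction and no hook removal: assume $i<j$ and $w_i\geq w_j$, let $i'$ be the \emph{largest} index with $i'<j$ and $w_{i'}\geq w_j$ (so $i\leq i'$). Maximality makes $(i',w_{i'})$ a descent top, hence the southwest endpoint of a hook $H$ by condition 2, and maximality again forces the northeast endpoint of $H$ to lie at position $\geq j$. Then $(j,w_j)$ receives the color of $H$ or of a hook whose southwest endpoint lies strictly to the right of $(i',w_{i'})$, and no such color can be received by $(i,w_i)$, which sits at position $\leq i'$; this contradicts the assumption that the two points share a color. If you want to salvage your ``walk from $i$ to $j$'' picture, this maximality-plus-coloring argument is the missing ingredient that replaces the false ``come out strictly higher'' claim.
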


\begin{proof}
Assume instead that $i<j$ and $w_i\geq w_j$. Let $i'$ be the largest integer satisfying $i'<j$ and $w_{i'}\geq w_j$. The point $(i',w_{i'})$ must be a descent top of $w$, so condition 2 in the definition of a valid hook configuration tells us that there is a hook $H$ in $\mathscr H$ whose southwest endpoint is $(i',w_{i'})$. The northeast endpoint of $H$ must either lie to the right of $(j,w_j)$ or be equal to $(j,w_j)$. The point $(j,w_j)$ must be given the same color as either $H$ or a hook whose southwest endpoint is to the right of $(i',w_{i'})$. The point $(i,w_i)$ cannot be given this color, which contradicts our hypothesis.  
\end{proof}

Let $\mathscr H=(H_1,\ldots,H_k)$ be a valid hook configuration of a word $w=w_1\cdots w_m$. Let $Q_r(\mathscr H)$ denote the set of points that are given the same color as the hook $H_r$ in the coloring induced by $\mathscr H$. Also, let $Q_0(\mathscr H)$ denote the set of blue points. For each $r\in\{0,\ldots,k\}$, Lemma \ref{lem:colorsincrease} guarantees that the heights of the points in $Q_r(\mathscr H)$ are distinct (since they are strictly increasing from left to right). Therefore, it is natural to define $X_r(\mathscr H)$ to be this set of heights. In symbols, we have \[X_r(\mathscr H)=\{w_j:(j,w_j)\in Q_r(\mathscr H)\}.\] 

For example, suppose $\mathscr H$ is the valid hook configuration of $21123214567$ shown in Figure \ref{Fig6}. Referring to the induced coloring shown in Figure \ref{Fig7}, we find that 
\[Q_0(\mathscr H)=\{(1,2),(5,3),(12,7)\},\hspace{0.3cm}Q_1(\mathscr H)=\{(2,1)\},\hspace{0.3cm}Q_2(\mathscr H)=\{(3,1)\},\hspace{0.3cm}Q_3(\mathscr H)=\{(4,2)\},\] \[Q_4(\mathscr H)=\{(6,2),(9,4),(10,5)\},\hspace{0.3cm}Q_5(\mathscr H)=\{(7,1)\},\hspace{0.3cm}Q_6(\mathscr H)=\{(8,2)\},\hspace{0.3cm}Q_7(\mathscr H)=\{(11,6)\}\]
and 
\[X_0(\mathscr H)=\{2,3,7\},\hspace{0.3cm}X_1(\mathscr H)=\{1\},\hspace{0.3cm}X_2(\mathscr H)=\{1\},\hspace{0.3cm}X_3(\mathscr H)=\{2\},\] \[X_4(\mathscr H)=\{2,4,5\},\hspace{0.3cm}X_5(\mathscr H)=\{1\},\hspace{0.3cm}X_6(\mathscr H)=\{2\},\hspace{0.3cm}X_7(\mathscr H)=\{6\}.\]

The Catalan numbers appear in Theorem \ref{mainVHCthm} because $C_n$ is the number of (unlabeled) binary plane trees with $n$ nodes. Equivalently, if $X$ is a set of positive integers with $|X|=n$, then $C_n$ is the number of decreasing binary plane trees on $X$ whose postorder readings are in increasing order (since there is a unique way to add labels to each unlabeled binary plane tree so that its postorder is increasing).
If $\mathscr H=(H_1,\ldots,H_k)$ is a valid hook configuration that induces the valid composition $q^{\mathscr H}=(q_0,\ldots,q_k)$, then $q_r=|Q_r(\mathscr H)|=|X_r(\mathscr H)|$. We say that a tuple $\mathscr T=(T_0,\ldots,T_k)$ \emph{spawns from $\mathscr H$} if, for each $r\in\{0,\ldots,k\}$, $T_r$ is a decreasing binary plane tree on $X_r(\mathscr H)$ whose postorder reading is in increasing order. There are exactly $C_{q^{\mathscr H}}$ tuples that spawn from $\mathscr H$.\footnote{In general, the number of trees of a certain type with a prescribed postorder reading is given by a sum of products of numbers that count certain unlabeled plane trees, where the sum ranges over a specific set of valid hook configurations. This is explained in the context of permutations in \cite{Defant}.}  We are finally equipped to prove Theorem \ref{mainVHCthm}.

\begin{proof}[Proof of Theorem \ref{mainVHCthm}] Fix a word $w$. The proof that $|\fast^{-1}(w)|=\sum_{\mathscr H\in\mathcal H_{\mathcal R}(w)}C_{q^{\mathscr H}}$ consists of three steps. The first step is the description of an algorithm that produces a tree in $\mathcal R$ from a pair $(\mathscr H,\mathscr T)$, where $\mathscr H\in\mathcal H_{\mathcal R}(w)$ and $\mathscr T$ is a tuple that spawns from $\mathscr H$. The second step is a demonstration that each tree produced from this algorithm in fact has postorder $w$. The third step is a demonstration that every tree in $\mathcal R$ with postorder $w$ arises in a unique way from this algorithm. The second and third steps are virtually identical to the proofs of Proposition 3.1 and Theorem 3.1 in \cite{Defant}, so we will omit them here. To show that $|\slow^{-1}(w)|=\sum_{\mathscr H\in\mathcal H_{\mathcal L}(w)}C_{q^{\mathscr H}}$, we will describe a slightly different algorithm that produces a tree in $\mathcal L$ from a pair $(\mathscr H,\mathscr T)$, where $\mathscr H\in\mathcal H_{\mathcal L}(w)$ and $\mathscr T$ is a tuple that spawns from $\mathscr H$. As before, we will not go through the details of proving that this algorithm produces a tree with postorder $w$ and that every tree in $\mathcal L$ with postorder $w$ arises uniquely in this fashion. 

For the first algorithm, suppose we are given a word $w=w_1\cdots w_m$ and a pair $(\mathscr H,\mathscr T)$ such that $\mathscr H=(H_1,\ldots,H_k)\in\mathcal H_{\mathcal R}(w)$ and $\mathscr T=(T_0,\ldots,T_k)$ is a tuple that spawns from $\mathscr H$. We will construct a sequence of trees $\tau_m,\tau_{m-1},\ldots,\tau_1$ (in this order), and the tree $\tau_1$ will be the output of the algorithm. The tree $\tau_m$ consists of a single root vertex with label $w_m$. At each step, we produce $\tau_\ell$ by adding a leaf vertex $y_\ell$ with label $w_{\ell}$ to the tree $\tau_{\ell+1}$. There are two cases to consider when describing how to attach $y_\ell$ to $\tau_{\ell+1}$. 

\textbf{Case 1:} Suppose $(\ell,w_\ell)$ is the southwest endpoint of a hook $H_t$. Note that $H_t$ is the only hook with southwest endpoint $(\ell,w_\ell)$. Let $(j,w_j)$ be the northeast endpoint of $H_t$. If $y_j$ has no children in $\tau_{\ell+1}$, make $y_\ell$ a right child of $y_j$. If $y_j$ already has a right child in $\tau_{\ell+1}$, make $y_\ell$ a left child of $y_j$. 

\textbf{Case 2:} Suppose $(\ell,w_\ell)$ is not the southwest endpoint of any hook in $\mathscr H$. Let $u$ be the largest element of $\{1,\ldots,\ell\}$ such that $(u,w_u)$ is given the same color as $(\ell+1,w_{\ell+1})$ in the coloring induced by $\mathscr H$. In other words, if $r$ is the unique integer such that $(\ell+1,w_{\ell+1})\in Q_r(\mathscr H)$, then $u$ is the largest element of $\{1,\ldots,\ell\}$ such that $(u,w_u)\in Q_r(\mathscr H)$. One can show that $u$ exists.\footnote{In fact, $u$ is the smallest integer such that $(u,w_u)$ is connected to $(\ell,w_\ell)$ via a connected sequence of hooks in $\mathscr H$.} Furthermore, Lemma \ref{lem:colorsincrease} tells us that $w_u<w_{\ell+1}$. This implies that $w_u$ is not the largest element of $X_r(\mathscr H)$, so it has a parent in the tree $T_r$. Let $(v,w_v)\in Q_r(\mathscr H)$ be the point such that $w_v$ is the parent of $w_u$ in $T_r$. We know that $w_u<w_v$ because $T_r$ is a decreasing binary plane tree on $X_r(\mathscr H)$. Lemma \ref{lem:colorsincrease} guarantees that $u<v$, so our choice of $u$ forces $v\geq \ell+1$. This means that $y_v$ is a vertex in $\tau_{\ell+1}$. If $y_v$ has no children in $\tau_{\ell+1}$, make $y_\ell$ a right child of $y_v$. If $y_v$ already has a right child in $\tau_{\ell+1}$, make $y_\ell$ a left child of $y_v$.  

Case $1$ is really telling us that, roughly speaking, the hooks in $\mathscr H$ turn into some (but not all) of the edges in our tree. We now wish to show that the tree $\tau_1$ that we produce at the end of the algorithm is actually a weakly decreasing plane tree. If $(\ell,w_\ell)$ is the southwest endpoint of a hook, then it is clear from the definition of a hook that $y_\ell$ is attached as a child of a vertex whose label is greater than or equal to $w_\ell$. Suppose $(\ell,w_\ell)$ is not the southwest endpoint of a hook. Let $u$ and $v$ be as in the description of Case $2$ above. We need to show that $w_\ell\leq w_v$; we will actually show the stronger statement that $w_\ell<w_v$. Indeed, if $w_\ell\geq w_v$, then we can let $\ell'$ be the largest integer such that $\ell'<v$ and $w_{\ell'}\geq w_v$. As in the proof of Lemma \ref{lem:colorsincrease}, $(\ell',w_\ell')$ must be a descent top of $w$, so it is the southwest endpoint of a hook $H$. The point $(v,w_v)$ must be given the same color as either $H$ or a hook whose southwest endpoint is to the right of $(\ell',w_{\ell'})$. The point $(u,w_u)$ cannot be given this same color, which contradicts the fact that $(u,w_u)$ and $(v,w_v)$ have the same color. 

Next, we establish that we can actually perform every step of the above algorithm.  It suffices to show that we never reach a stage at which we try to attach a leaf as a child of a vertex that already has two children.  Choose an arbitrary point $(j,w_j)$, and let $t$ be the unique integer such that $(j,w_j)\in Q_t(\mathscr H)$.

Suppose $(j,w_j)$ is the northeast endpoint of a hook. We have assumed that $\mathscr H\in\mathcal H_{\mathcal R}(w)$, so $\mathscr H$ is a binary valid hook configuration. This means that there are at most two hooks with northeast endpoint $(j,w_j)$, so at most two vertices can be added as children of $y_j$ via Case 1. Remark \ref{Rem1} tells us that there is no point with the same color as $(j,w_j)$, so the tree $T_t$ consists of a single vertex. This implies that no vertex can be added as a child of $y_j$ via Case $2$. 

Next, suppose $(j,w_j)$ is not the northeast endpoint of a hook. Clearly, no vertex can be added as a child of $y_j$ via Case $1$. Because $T_t$ is a \emph{binary} plane tree, $w_j$ has at most two children in $T_t$. Each child of $w_j$ in $T_t$ can give rise to at most a single child of $w_j$ via Case $2$, and it follows that at most two vertices can be added as children of $y_j$ via Case $2$.

Finally, we need to discuss why the tree $\tau_1$ is in $\mathcal R$. The above argument shows that $\tau_1$ is a weakly decreasing binary plane tree, so we must explain why no vertex in $\tau_1$ has the same label as its left child. This is where we use the fact that every horizontal hook in $\mathscr H$ is small (by the definition of $\mathcal H_{\mathcal R}(w)$). Indeed, suppose $y_a$ is a vertex in $\tau_1$ with a left child $y_b$. If $y_b$ was attached to $y_a$ via Case $1$, then there must be a hook in $\mathscr H$ with southwest endpoint $(a,w_a)$ and northeast endpoint $(b,w_b)$. If this hook were small, then $y_b$ would have been added as a right child of $y_a$ instead of a left child. This means that the hook cannot be small, so it cannot be horizontal. Hence, $w_a<w_b$. On the other hand, if $y_b$ was added to $y_a$ via Case $2$, then the paragraph immediately following the description of Case 2 makes it clear that $w_a<w_b$.    

It now remains to describe the algorithm that produces a tree in $\mathcal L$ from a pair $(\mathscr H,\mathscr T)$, where $\mathscr H\in\mathcal H_{\mathcal L}(w)$ and $\mathscr T$ is a tuple that spawns from $\mathscr H$. The first part of the algorithm runs exactly as the previous algorithm. More precisely, we produce trees $\tau_m,\tau_{m-1},\ldots,\tau_1$ using the exact same procedure as before. We then modify the tree $\tau_1$ to create a tree $\tau_1'\in\mathcal L$. 

The same arguments as above show that $\tau_1$ is a weakly decreasing binary plane tree. Suppose $y_c$ is a right child of $y_d$ in $\tau_1$ and $w_c=w_d$. We claim that $y_c$ is the only child of $y_d$ in $\tau_1$. This means that we can simply ``swing" $y_c$ (along with its subtree) to the left so that it becomes a left child of $y_d$. Once we swing all of the right children that have the same labels as their parents, we will be left with our desired tree $\tau_1'\in\mathcal L$. 

It remains to prove the claim that $y_c$ is the only child of $y_d$ in $\tau_1$ whenever $y_c=y_d$. We have seen that $y_c$ could not have been attached as a child of $y_d$ via Case $2$ (since if it were, we would have $w_c<w_d$). Therefore, $(c,w_c)$ is the southwest endpoint of a hook $H$ with northeast endpoint $(d,w_d)$. According to condition 3 in the definition of a valid hook configuration, $(d,w_d)$ is the northeast endpoint of a small hook. This small hook has southwest endpoint $(d-1,w_{d-1})$. It follows from the description of Case $1$ in the above algorithm that $y_{d-1}$ is the right child of $y_d$ in $\tau_1$. This forces $c=d-1$, so $H$ is a small horizontal hook. Since $\mathscr H\in\mathcal H_{\mathcal L}(w)$, we know that $H$ is the only hook with northeast endpoint $(d,w_d)$. Accordingly, no vertex other than $y_c$ could have been attached as a child of $y_d$ via Case $1$. Because $(d,w_d)$ is the northeast endpoint of a hook, it follows from Remark \ref{Rem1} that no vertex could have been added as a child of $y_d$ via Case $2$. This proves the claim.
\end{proof}

\section{Fertility Numbers}
As an immediate application of the theory developed in the previous section, we prove a result 
concerning what we call $\fast$-fertility numbers and $\slow$-fertility numbers. Recall that West defined the fertility of a permutation $\pi$ to be $|s^{-1}(\pi)|$. In \cite{Defant4}, the first author defined a \emph{fertility number} to be a nonnegative integer $f$ such that there exists a permutation with fertility $f$. Among other things, he showed that $3,7,11,15,19,$ and $23$ are not fertility numbers, and he has conjectured that infinitely many positive integers are not fertility numbers. By analogy, we define a \emph{$\fast$-fertility number} to be a nonnegative integer $f$ such that there exists a word $w$ with $|\fast^{-1}(w)|=f$. We define \emph{$\slow$-fertility numbers} similarly. It turns out that $\fast$-fertility and $\slow$-fertility numbers are much less mysterious than ordinary fertility numbers. 

\begin{theorem}\label{thm:fertilitynumbers}
For every nonnegative integer $f$, there exists a word $w$ such that $|\fast^{-1}(w)|=|\slow^{-1}(w)|=f$. 
\end{theorem}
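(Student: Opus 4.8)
The plan is to write down, for each $f$, an explicit word $w$ and to compute $|\fast^{-1}(w)|$ and $|\slow^{-1}(w)|$ directly, splitting into the cases $f=0$ and $f\geq 1$. The only general fact I will use is that $\fast$ and $\slow$ preserve the number of copies of each letter (immediate from the definition, or from Proposition \ref{prop:permexpand}); this makes the set of preimages of $w$ finite, since it is contained in $\mathcal W_{\mathbf c}$, where $\mathbf c$ records the letter multiplicities of $w$.

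For $f=0$, I would take $w=21$. Any preimage of $21$ lies in $\mathcal W_{(1,1)}=\{12,21\}$, and since $\fast$ and $\slow$ each map both of these words to $12$, we get $\fast^{-1}(21)=\slow^{-1}(21)=\emptyset$. For $f\geq 1$, I would take $w=\underbrace{1\,1\cdots 1}_{f-1}\,2$ (when $f=1$ this is just the one-letter word $2$). Every preimage of $w$ lies in $\mathcal W_{(f-1,1)}$, which consists of exactly the $f$ words $v_i=1^i\,2\,1^{\,f-1-i}$ for $0\leq i\leq f-1$. Since $\fast(1^j)=\slow(1^j)=1^j$, and since the recursive definitions, applied to a word whose largest letter occurs once, specialize to West's identity $s(LnR)=s(L)s(R)n$, we obtain
\[\fast(v_i)=\fast(1^i)\,\fast(1^{\,f-1-i})\,2=1^{f-1}2=\slow(1^i)\,\slow(1^{\,f-1-i})\,2=\slow(v_i).\]
Hence every element of $\mathcal W_{(f-1,1)}$ is a preimage of $w$ under each of $\fast$ and $\slow$, so $|\fast^{-1}(w)|=|\slow^{-1}(w)|=f$.

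There is essentially no obstacle here; the only thing requiring care is the bookkeeping in the degenerate cases $f=0$ and $f=1$, together with the (trivial but essential) remark that $\fast$ and $\slow$ fix each $\mathcal W_{\mathbf c}$ setwise, so that ``$\fast^{-1}(w)$'' is a finite set one can enumerate. If one prefers an argument phrased in the language of Section 3, one can instead invoke Lemma \ref{lem:treecorrespondence}: the trees in $\mathcal R$ (respectively $\mathcal L$) with postorder $1^{f-1}2$ are precisely those whose root is labelled $2$ and whose left and right subtrees are right-combs (respectively left-combs) built entirely from $1$'s with sizes summing to $f-1$, and there are exactly $f$ of these; Theorem \ref{mainVHCthm} recovers the same count by summing $C_{q^{\mathscr H}}$ over the valid hook configurations of $1^{f-1}2$.
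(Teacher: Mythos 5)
Your proof is correct, and it takes a genuinely different, substantially more elementary route than the paper's. The paper treats this theorem as an application of the machinery of Section 3: it gets the even values by citing from \cite{Defant4} that the permutation $\xi_m=m(m-1)\cdots 21(m+1)\cdots(2m)$ has fertility $2m$ (using that $\fast$, $\slow$, and $s$ agree on permutations), and then reaches the odd value $2m+1$ by inserting a repeated $1$ to form the word $\xi_m'$ and computing $|\fast^{-1}(\xi_m')|=|\slow^{-1}(\xi_m')|=2m+1$ via Theorem \ref{mainVHCthm}, by explicitly enumerating the binary valid hook configurations of $\xi_m'$ and their induced compositions. You instead exploit repeated letters much more aggressively: for $w=1^{f-1}2$ the whole class $\mathcal W_{(f-1,1)}$, which has exactly $f$ elements, is sent to $w$ by both maps, and since $\fast$ and $\slow$ preserve letter multiplicities the preimage sets are exactly this class. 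Your computations check out: with $k=1$ copy of the maximum both recursive definitions reduce to $s(LnR)=s(L)s(R)n$, so $\fast(v_i)=\slow(v_i)=1^{f-1}2$ for every $v_i=1^i2\,1^{f-1-i}$; the degenerate cases $f=0$ (take $21$) and $f=1$ (take $2$) are handled correctly; and your side remark via Lemma \ref{lem:treecorrespondence} is also right, since the all-$1$ subtrees hanging off the root labelled $2$ must be right paths in $\mathcal R$ and left paths in $\mathcal L$, giving $f$ trees either way. What your approach buys is brevity and self-containedness---no appeal to \cite{Defant4} and no valid hook configurations at all. What the paper's approach buys is a worked illustration of the VHC theorem and witnesses that are only one repeated letter away from being permutations, which sharpens the contrast with the genuinely restricted set of permutation fertility numbers studied in \cite{Defant4}; your witnesses, using only two distinct letter values, make the theorem look (correctly) much easier in the word setting.
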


\begin{proof}
It is clear that the word $21$ has fertility $0$ under both $\fast$ and $\slow$ and that the word $1$ has fertility  $1$ under each map.  In \cite{Defant4}, it is shown that the permutation $\xi_m=m(m-1)\cdots 321(m+1)(m+2)\cdots (2m)$ has fertility $2m$ for every integer $m \geq 1$. Since $\fast$ and $\slow$ both restrict to the map $s$ on the set of permutations, this tells us that \[|\fast^{-1}(\xi_m)|=|\slow^{-1}(\xi_m)|=|s^{-1}(\xi_m)|=2m.\] For each integer $m \geq 1$, let $\xi_m'=m(m-1)\cdots 3211(m+1)(m+2)\cdots (2m)$ be the word obtained by inserting an additional $1$ between the $2$ and the $1$ in $\xi_m$. We will finish the proof by showing that \[|\fast^{-1}(\xi_m')|=|\slow^{-1}(\xi_m')|=2m+1.\] The reader may find it helpful to refer to Figure \ref{Fig8}, which shows the valid hook configurations of $\xi_3'=3211456$ and their induced colorings. 

\begin{figure}[t]
\begin{center} 
\includegraphics[width=\linewidth]{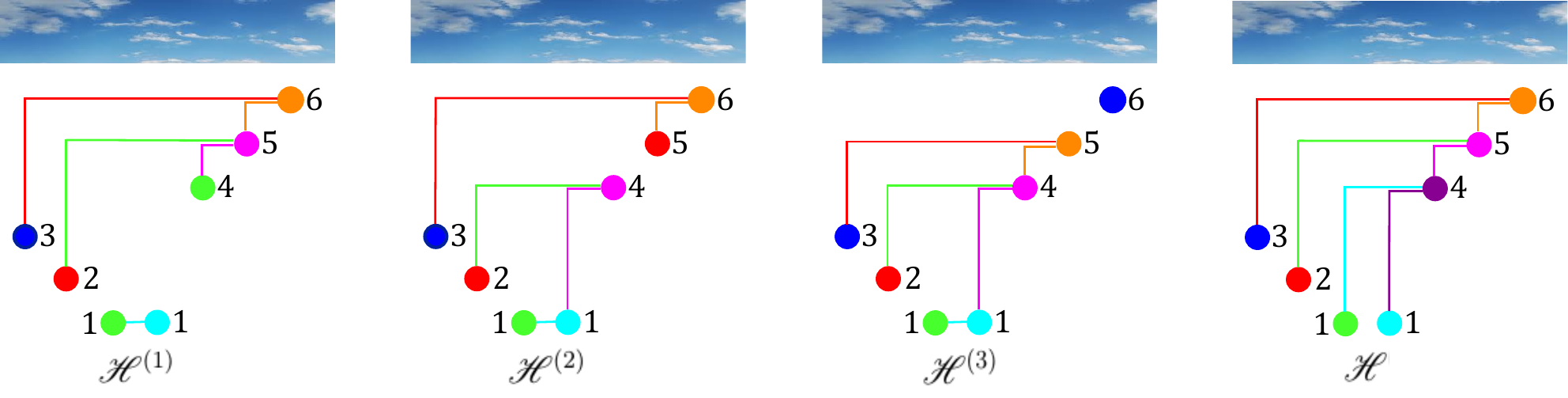}
\end{center}
\caption{The valid hook configurations of $\xi_3'=3211456$, which are described in the proof of Theorem \ref{thm:fertilitynumbers}.}\label{Fig8}
\end{figure}

Note that the only possible horizontal hook in a valid hook configuration of $\xi_m'$ is the hook with southwest endpoint $(m,1)$ and northeast endpoint $(m+1,1)$. It follows that $\mathcal H_{\mathcal R}(\xi_m')$ and $\mathcal H_{\mathcal L}(\xi_m')$ are both equal to the set of all binary valid hook configurations of $\xi_m'$. Let us choose such a binary valid hook configuration. The descent tops of $\xi_m'$ are precisely the points of the form $(i,m+1-i)$ for $i\in\{1,\ldots,m\}$. Let $H_i$ denote the hook with southwest endpoint $(i,m+1-i)$. 

Let us first suppose that $H_m$ has northeast endpoint $(m+1,1)$. The northeast endpoints of $H_1,\ldots,H_{m-1}$ form an $(m-1)$-element subset of $\{(m+2,m+1),(m+3,m+2),\ldots,(2m+1,2m)\}$. Of course, this subset is uniquely determined by choosing the positive integer $j$ such that $(m+1+j,m+j)$ is not in the subset. Once this element is chosen, the hooks $H_1,\ldots,H_{m-1}$ are uniquely determined. For each $i\in\{1,\ldots,m-1\}$, we must add an additional small hook that has the same northeast endpoint as $H_i$. This produces a binary valid hook configuration $\mathscr H^{(j)}$. In the coloring of the plot of $\xi_m'$ induced by $\mathscr H^{(j)}$, all of the points are given distinct colors, with one exception: the points $(m+1-j,j)$ and $(m+1+j,m+j)$ are both given the same color as $H_{m-j}$ (where $H_0$ denotes the sky). The valid composition induced from this valid hook configuration is $q^{\mathscr H^{(j)}}=(1,\ldots,1,2,1\ldots,1)$, where the $2$ is in the $(m+1-j)$-th position. Since $C_{(1,1,\ldots,1,2,1\ldots,1)}=2$, the valid hook configuration $\mathscr H^{(j)}$ contributes a $2$ to each of the sums in Theorem \ref{mainVHCthm}. 

Second, suppose that we have a binary valid hook configuration of $\xi_m'$ in which $H_m$ does not have northeast endpoint $(m+1,1)$. This forces $H_i$ to have northeast endpoint $(2m+2-i,2m+1-i)$ for every $i\in\{1,\ldots,m-1\}$. For each $i\in\{1,\ldots,m\}$, we must add an additional small hook that has the same northeast endpoint as $H_i$. This produces a binary valid hook configuration $\mathscr H ^{+}$. In the coloring of the plot of $\xi_m'$ induced by $\mathscr H^+$, all of the points are given distinct colors. Thus, $q^{\mathscr H}=(1,1,\ldots,1)$. The valid hook configuration $\mathscr H$ contributes $C_{(1,1,\ldots,1)}=1$ to each of the sums in Theorem \ref{mainVHCthm}. In summary, 
\[|\fast^{-1}(\xi_m')|=|\slow^{-1}(\xi_m')|=\sum_{j=1}^m C_{q^{\mathscr H^{(j)}}}+C_{q^{\mathscr H ^+}}=2m+1. \qedhere\]
\end{proof}

\section{Sortable Words}

The $t$-stack-sortable permutations mentioned in the introduction have received a large amount of attention \cite{Bona, BonaSurvey, Defant2,West, Consumption}.  We define a $t$-$\fast$-sortable word to be a word $w$ such that $\fast^t(w)$ is an identity word. In other words, it is a word $w$ such that $\langle w\rangle_{\fast}\leq t$. We define $t$-$\slow$-sortable words similarly.  Our goal in this section is to investigate the $1$-$\fast$-sortable words and $1$-$\slow$-sortable words. For brevity, we call these words \emph{$\fast$-sortable} and \emph{$\slow$-sortable}, respectively. 

Recall that a permutation is sortable if and only if it avoids the pattern $231$.  We begin with the corresponding characterization for sortable words.

\begin{proposition}\label{prop:sortablechar}
A word is $\fast$-sortable if and only if it avoids the pattern $231$.  A word is $\slow$-sortable if and only if it avoids the patterns $231$ and $221$.
\end{proposition}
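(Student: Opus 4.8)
The plan is to prove both characterizations simultaneously using the recursive definitions of $\fast$ and $\slow$ together with the recursive structure of pattern containment. Recall that if $w$ has largest letter $n$ occurring $k$ times, we write $w = A_1 n A_2 n \cdots n A_{k+1}$ with each $A_i$ using only letters $\leq n-1$. The key recursive facts I would establish are: (i) $w$ avoids $231$ if and only if each $A_i$ avoids $231$ \emph{and} $A_1 = A_2 = \cdots = A_k = \varepsilon$ (because any letter appearing before the final occurrence of $n$ but to the left of some later $n$, together with that later $n$ and any smaller letter after it, would create a $231$; more carefully, any letter in $A_1 \cup \cdots \cup A_k$ followed by an $n$ followed by a smaller letter — and such a smaller letter exists unless everything after is empty — yields a $231$ pattern); (ii) $w$ avoids both $231$ and $221$ if and only if each $A_i$ avoids $231$ and $221$, \emph{and} $A_1 = \cdots = A_{k-1} = \varepsilon$ while $A_k$ contains no repeated letters that... — actually the cleanest formulation is that avoiding $221$ additionally forbids two copies of $n$ with any letter before the first of them, so it forces $A_1 = \cdots = A_k = \varepsilon$ when $k \geq 2$, and when $k=1$ it imposes nothing extra at the top level beyond what $A_1, A_2$ must satisfy.

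Given these structural facts, the proof of the proposition becomes a clean induction on word length. For the $\fast$ direction: by the recursive definition, $\fast(w) = \fast(A_1)\cdots\fast(A_{k+1}) \, n^k$. This equals the identity word $\Id$ if and only if $\fast(A_1)\cdots\fast(A_{k+1})$ is the identity word on the remaining letters. Since the letters of $\Id$ are nondecreasing and the $A_i$ partition the letters below $n$, this happens if and only if each $\fast(A_i)$ is the identity word $\Id$ on its own multiset of letters \emph{and} no letter appears in $A_i$ that should come after a letter in $A_{i'}$ for $i < i'$ — but since $\Id$ is sorted and the concatenation must be sorted, and all of $A_1, \dots, A_k$ lie strictly left of the last copy of $n$, one sees the only way the concatenation is sorted is if $A_1 = \cdots = A_k = \varepsilon$ and $\fast(A_{k+1}) = \Id$. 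By induction $\fast(A_{k+1}) = \Id$ iff $A_{k+1}$ avoids $231$, and by fact (i) this matches exactly the condition for $w$ to avoid $231$. The $\slow$ direction is parallel: $\slow(w) = \slow(A_1)\slow(A_2)n\slow(A_3)n\cdots n \slow(A_{k+1}) n$, and this is $\Id$ iff $A_1 = \cdots = A_k = \varepsilon$ and $\slow(A_{k+1}) = \Id$ — wait, one must be careful here because the $n$'s are interspersed, so in fact I need $\slow(A_2) = \varepsilon$ as well unless $k=1$; the upshot after tracking the interspersed $n$'s is that $\slow(w) = \Id$ forces $A_1 = \cdots = A_{k} = \varepsilon$ \emph{and} (when $k \geq 2$) also $A_{k+1} = \varepsilon$, i.e., $w = n^k$ — no: re-examining, when $k = 1$ we get $\slow(w) = \slow(A_1)\slow(A_2) n$, requiring $A_1 = \varepsilon$ and $\slow(A_2) = \Id$; when $k \geq 2$ the trailing $n \slow(A_{k+1}) n$ shows $\slow(A_{k+1})$ must be empty and similarly all earlier $A_i$ empty. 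By induction and fact (ii), these conditions are exactly "$w$ avoids $231$ and $221$."

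The main obstacle, and the step I would be most careful about, is proving the structural pattern-containment lemmas (i) and (ii) cleanly, and in particular handling the subtle interaction between "$n$ appears multiple times" and the pattern $221$: the pattern $221$ requires a letter equal to or... no, $221$ as a pattern means three positions whose values have the relative order of $2,2,1$, i.e., the first two values are equal and both exceed the third. So two copies of $n$ followed later by any smaller letter is a $221$ pattern — but there need not be a smaller letter after the last $n$. The correct statement is: if $k \geq 2$ then $n n$ appears (at positions of the last two $n$'s, or rather any two $n$'s with the right thing after), and a $221$ arises precisely when some letter $< n$ follows two copies of $n$; combined with avoiding $231$ (which already forces everything between and before the $n$'s to be empty when $k \geq 2$ beyond $A_{k+1}$), one concludes $A_{k+1}$ must also be empty. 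I would state these as a separate lemma with its own short induction, then feed it into the main induction. I expect the bookkeeping around which $A_i$ must vanish — and reconciling it with the asymmetric way $\fast$ and $\slow$ place the trailing $n$'s — to be the only genuinely fiddly part; everything else is a routine structural induction.
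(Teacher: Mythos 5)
Your overall strategy (structural induction via the recursive definitions and the block decomposition $w=A_1nA_2n\cdots nA_{k+1}$) is viable and genuinely different from the paper's proof, which argues directly about the stack: it shows that a $231$ occurrence forces two letters to come out in the wrong order, and conversely that any inversion in the output produces a $231$ (with weak inequalities for $\slow$ and $221$). However, as written your proof has a genuine gap: the key structural lemmas (i) and (ii) are false. Avoiding $231$ does \emph{not} force $A_1=\cdots=A_k=\varepsilon$: the word $132$ (and, with $k\geq 2$, the word $1323$) avoids $231$, is $\fast$-sortable, and has $A_1\neq\varepsilon$. Correspondingly, your claim that the concatenation $\fast(A_1)\cdots\fast(A_{k+1})n^k$ can only be the identity when $A_1,\ldots,A_k$ are empty is wrong; the correct condition is that each $\fast(A_i)$ is an identity word \emph{and} every letter of $A_i$ is at most every letter of $A_j$ for $i<j$. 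The same error recurs on the $\slow$ side: the word $122$ has $k=2$, $A_1=1\neq\varepsilon$, avoids both $231$ and $221$, and satisfies $\slow(122)=122=\Id$, contradicting both your claim that $221$-avoidance (with $k\geq2$) forces $A_1=\cdots=A_k=\varepsilon$ and your claim that $\slow(w)=\Id$ forces all blocks, including $A_{k+1}$, to vanish. (Note also that a letter preceding two copies of $n$ gives a $122$ pattern, not a $221$ pattern; $221$ requires the repeated large letters to come \emph{before} the small one, so what $221$-avoidance actually forces, when $k\geq 2$, is $A_3=\cdots=A_{k+1}=\varepsilon$, i.e., emptiness of the blocks after the second copy of $n$.)

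The induction can be repaired, but the bookkeeping you flagged as ``fiddly'' is exactly where the statements must change: for $\fast$, prove that $w$ avoids $231$ iff each $A_i$ avoids $231$ and, for each $i\leq k$, no letter smaller than some letter of $A_i$ occurs after the $i$-th copy of $n$; this matches precisely the condition for $\fast(A_1)\cdots\fast(A_{k+1})n^k$ to be nondecreasing. For $\slow$ with $k\geq 2$, the correct forced structure is $A_3=\cdots=A_{k+1}=\varepsilon$ together with $A_1,A_2$ both $\slow$-sortable and every letter of $A_1$ at most every letter of $A_2$, matching $\slow(w)=\slow(A_1)\slow(A_2)n\cdots n$. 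With those corrected lemmas the induction goes through, but in its current form the argument does not prove the proposition.
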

\begin{proof}
We prove the contrapositive of each statement.  Let $w=w_1w_2\cdots w_m$.  First, suppose $w$ contains the pattern $231$, i.e., there exist $1\leq a<b<c\leq m$ such that $w_c<w_a<w_b$.  Consider the action of $\fast$ on $w$.  Because $w_a<w_b$, it is clear that $w_b$ will force $w_a$ to pop out of the stack if it has not already left the stack, and this occurs before $w_c$ even enters the stack.  Hence, $w_a$ precedes $w_c$ in $\fast(w)$, which implies that $\fast(w) \neq \Id$.  Second, suppose $\fast(w)=w_1'w_2'\cdots w_m' \neq \Id$.  Then there exist $1\leq d <e \leq m-1$ such that $w_d'>w_e'$.  (We have the restriction $e \leq m-1$ because no letter is larger than $w_m'$.)  The letter $w_d'$ must have exited the stack before $w_e'$ could even enter it. Let $w_f'$ be the letter that forces $w_d'$ to pop out of the stack. We must have $w_f'>w_d'>w_e'$. Furthermore, these letters must appear in the order $w_d',w_f',w_e'$ in $w$, which means that these three letters form a $231$ pattern in $w$.  This establishes the first statement. 

The proof of the second statement proceeds in a similar manner.  The only difference is that we replace the inequalities $w_a<w_b$ and $w_d'<w_f'$ by $w_a\leq w_b$ and $w_d'\leq w_f'$. 
\end{proof}

This proposition yields an immediate comparison between $|\fast^{-1}(\Id_{\textbf{c}})|$ and $|\slow^{-1}(\Id_{\textbf{c}})|$ for various vectors $\textbf{c}=(c_1, \ldots, c_n)$; the result holds particular interest in light of the discussion of Section $2$.  We remark that an alternative proof of this fact can be obtained by considering the map discussed in Section $3$ between $\mathcal{R}$ and $\mathcal{L}$ that turns right children into left children when these children equal their parents.

\begin{corollary}
For any $\textbf{c}=(c_1, \dots, c_n)$, where $c_1, \dots, c_n$ are positive integers, we have $$|\fast^{-1}(\Id_{\textbf{c}})| \geq |\slow^{-1}(\Id_{\textbf{c}})|.$$  Moreover, equality holds exactly when $c_i=1$ for all $i >1$.
\end{corollary}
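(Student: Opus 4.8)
The plan is to use Proposition~\ref{prop:sortablechar} to translate the two quantities into counts of pattern-avoiding words and then compare these counts directly. Since $\fast^{-1}(\Id_{\textbf c})$ is the set of $\fast$-sortable words in $\mathcal W_{\textbf c}$, Proposition~\ref{prop:sortablechar} identifies it with the set of words in $\mathcal W_{\textbf c}$ that avoid $231$; likewise $\slow^{-1}(\Id_{\textbf c})$ is the set of words in $\mathcal W_{\textbf c}$ avoiding both $231$ and $221$. The inequality $|\fast^{-1}(\Id_{\textbf c})| \geq |\slow^{-1}(\Id_{\textbf c})|$ is then immediate, because every word avoiding $\{231,221\}$ in particular avoids $231$, so the second set is a subset of the first.

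For the equality characterization, I would argue in two directions. First, suppose $c_i = 1$ for all $i > 1$. I claim that in this case every $231$-avoiding word in $\mathcal W_{\textbf c}$ automatically avoids $221$. Indeed, a $221$ pattern requires two equal letters (playing the roles of the two $2$'s) with a strictly smaller letter after them; the only letter value that is repeated in $\mathcal W_{\textbf c}$ is $1$, which is the smallest value, so it cannot play the role of a $2$ in a $221$ pattern. Hence the two sets coincide and equality holds. Conversely, suppose $c_i \geq 2$ for some $i > 1$; I must exhibit a word in $\mathcal W_{\textbf c}$ that avoids $231$ but contains $221$. A natural candidate is the word obtained from $\Id_{\textbf c}$ by taking the two copies of the letter $i$ and a single copy of the letter $1$ and rearranging just those into the pattern $i\, i\, 1$ near the front while keeping everything else nondecreasing — e.g. place $i\,i$ immediately followed by $1$ and then list all remaining letters in nondecreasing order. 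One checks this word contains the $221$ pattern (the two $i$'s and the following $1$) but still avoids $231$: any would-be $231$ pattern would need a "$2$" strictly larger than its "$3$" — wait, rather it needs $w_c < w_a < w_b$, and the only descent-like configuration created is of the form (large, large, small) with the two larges equal, which cannot be extended to a strict $231$. I would write this verification out carefully.

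The main obstacle is the converse direction: producing, for arbitrary $\textbf c$ with some repeated non-minimal letter, an explicit word that separates the two avoidance classes, and rigorously checking both that it contains $221$ and that it avoids $231$. The containment is easy, but the $231$-avoidance check requires care because the word has many letters and one must rule out all triples; the cleanest route is to choose the word so that it is nondecreasing except for one block of the form $i\,i\,1\cdots1$ (using one or more of the available $1$'s) sitting among otherwise sorted letters, and then observe that a $231$ pattern would force the middle ("$3$") letter to be strictly greater than the first ("$2$") letter, which in turn must be strictly greater than the last ("$1$") letter; since the only out-of-order letters are the two equal $i$'s followed by $1$'s, no strict three-term increase-then-drop can be assembled. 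I would also double-check the boundary case $n = 1$ (where the condition "$c_i = 1$ for all $i > 1$" is vacuously true), confirming that indeed $|\fast^{-1}(\Id_{\textbf c})| = |\slow^{-1}(\Id_{\textbf c})| = 1$ since $\mathcal W_{\textbf c}$ contains only $\Id_{\textbf c}$, consistent with the claimed equality.
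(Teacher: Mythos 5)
Your proposal is correct and follows essentially the same route as the paper: reduce via Proposition \ref{prop:sortablechar} to the inclusion of the $\{231,221\}$-avoiding words in the $231$-avoiding words, note that when only the letter $1$ is repeated no $221$ pattern can occur, and exhibit an explicit witness when some $c_i\geq 2$ with $i>1$. The only difference is the witness word (the paper moves the block of $(i-1)$'s to the right of the $i$'s in $\Id_{\textbf{c}}$, while you place $i\,i\,1$ at the front followed by the remaining letters in nondecreasing order); your $231$-avoidance check goes through since every inversion in your word has its larger entry equal to $i$ in one of the first two positions, so no strictly increasing pair can precede a smaller letter.
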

\begin{proof}
Fix some $\textbf{c}=(c_1, \dots, c_n)$.  Since any word $w \in \slow^{-1}(\Id_{\textbf{c}})$ avoids the patterns $231$ and $221$, it is also in $\fast^{-1}(\Id_{\textbf{c}})$.  Hence, $\slow^{-1}(\Id_{\textbf{c}}) \subseteq \fast^{-1}(\Id_{\textbf{c}})$, which establishes the inequality.

Now, suppose $c_i=1$ for all $i>1$.  Then it is impossible for any word $w \in \mathcal{W}_{\textbf{c}}$ to contain the pattern $221$, so the conditions for $w \in \mathcal{W}_{\textbf{c}}$ being in $\fast^{-1}(\Id_{\textbf{c}})$ and $\slow^{-1}(\Id_{\textbf{c}})$ are equivalent. We can conclude that $|\fast^{-1}(\Id_{\textbf{c}})|=|\slow^{-1}(\Id_{\textbf{c}})|$ in this case.  Finally, suppose that $c_i\geq 2$ for some $i>1$.  Consider the word $w \in \mathcal{W}_{\textbf{c}}$ that is obtained from $\Id_{\textbf{c}}$ by moving all of the $i-1$'s to the right of the $i$'s.  Since $w$ contains the pattern $221$ but not the pattern $231$, it is in $\fast^{-1}(\Id_{\textbf{c}})$ but not in $\slow^{-1}(\Id_{\textbf{c}})$.  Hence, $|\fast^{-1}(\Id_{\textbf{c}})| > |\slow^{-1}(\Id_{\textbf{c}})|$ is strict in this case.
\end{proof}

We devote the remainder of this section to enumerating the $\fast$-sortable and $\slow$-sortable words. According to Proposition \ref{prop:sortablechar}, this is equivalent to the more classical problem of enumerating the words that avoid $231$ and the words that avoid both $231$ and $221$. 

Let us focus first on $\fast$. In its most general form, our problem is find a formula, depending on $c_1,\ldots,c_n$, for the number of $\fast$-sortable words in $\mathcal W_{(c_1,\ldots,c_n)}$. An explicit formula seems unattainable in this level of generality, but we can at least obtain a recurrence. In fact, this has already been done. Because of Proposition \ref{prop:sortablechar}, the following theorem is equivalent to Lemma 3 in \cite{Atkinson}.

\begin{theorem}[\cite{Atkinson}]\label{thm:harerecurrence}
For nonnegative integers $c_1, \dots c_n$, let $\mathcal M(c_1,\ldots,c_n)=|\fast^{-1}(\Id_{(c_1,\ldots,c_n)})|$ denote the number of $\fast$-sortable words in $\mathcal W_{(c_1,\ldots,c_n)}$. We have $\mathcal M(c_1)=1$ for all choices of $c_1$. For $n\geq 2$, we have \[\mathcal M(c_1,\ldots,c_n)=\begin{cases} \mathcal M(c_1+c_2,c_3,\ldots,c_n)+\sum_{r=1}^{c_1}\mathcal M(r,c_2-1,c_3,\ldots,c_n) & \mbox{if } c_2\geq 1 \\ \mathcal M(c_1,c_3,\ldots,c_n) & \mbox{if } c_2=0. \end{cases}\]
\end{theorem}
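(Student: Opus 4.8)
The plan is to prove the recurrence by a direct decomposition of $\fast$-sortable words according to the position(s) of the letter $1$, using Proposition~\ref{prop:sortablechar} to work entirely with $231$-avoiding words. Fix nonnegative integers $c_1,\dots,c_n$ with $n\geq 2$, and set $\textbf{c}=(c_1,\dots,c_n)$. The base case $\mathcal M(c_1)=1$ is trivial since $\mathcal W_{(c_1)}$ has a single element, the identity word, which is $231$-avoiding. The case $c_2=0$ is also immediate: if the letter $2$ does not occur, then in any word of $\mathcal W_{\textbf c}$ the $1$'s play no role in creating a $231$ pattern (a $1$ can only be the ``$1$'' of such a pattern, and that requires a larger letter to its left followed by a still-larger letter, i.e.\ the pattern $231$ already among the letters $\geq 2$ is needed, or rather a $1$ with two larger letters before it in the order high-then-higher --- but any such pair of larger letters would themselves need... ) more carefully: deleting all the $1$'s gives a bijection $\mathcal W_{(c_1,0,c_3,\dots,c_n)}\to\mathcal W_{(c_3,\dots,c_n)}$ that preserves $231$-avoidance in both directions, because a $1$ in a $231$ pattern must be the smallest letter ``$1$'', and re-inserting $c_1$ copies of the globally smallest letter into a $231$-avoiding word can never create a $231$ pattern (the new letter is smallest, so it can only serve as the final ``$1$'', but then it needs a descent of larger letters before it, which already existed). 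This gives $\mathcal M(c_1,0,c_3,\dots,c_n)=\mathcal M(c_1,c_3,\dots,c_n)$.

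For the main case $c_2\geq 1$, I would split the set of $231$-avoiding words in $\mathcal W_{\textbf c}$ into two classes. First I claim that in a $231$-avoiding word, no letter $\geq 2$ may appear to the left of the \emph{last} $1$ unless... no: the right decomposition is to look at whether any copy of the letter $2$ appears \emph{before} the first $1$. Actually the cleanest approach, matching the two summands, is: let $w\in\mathcal W_{\textbf c}$ be $231$-avoiding and consider the block of $1$'s. In a $231$-avoiding word all the $1$'s need not be consecutive, but here is the key structural fact I would establish as a lemma: \emph{if $w$ is $231$-avoiding and some letter $a\geq 2$ occurs before some $1$, then every letter occurring after that $1$ is $\leq a$} --- more usefully, I would show that a $231$-avoiding word over $\{1,\dots,n\}$ either has all its $1$'s before every letter that is $\geq 2$, or else has exactly one ``first $1$'' and the portion after it is constrained. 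Let me instead use the standard first-letter / largest-letter recursion adapted to this setting. Write $w = u\,1\,v$ where this is the \emph{first} occurrence of the letter $1$. Since $w$ avoids $231$ and $1$ is the globally smallest value, $u$ and $v$ must each avoid $231$; moreover no element of $u$ may be followed (within $u v$, reading across the $1$) by a strictly larger element --- precisely, for $231$-avoidance across the split point we need: there is no $i\in u$, $j\in v$ with $w_i < w_j$ having the $1$ as the witness, which forces \textbf{every letter of $v$ to be $\geq$ every letter of $u$}.

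With that structural fact in hand (every letter of $u$ is $\le$ every letter of $v$, and $u,v$ each avoid $231$), I split into two cases matching the two summands. If $u$ contains no copy of the letter $2$: since every letter of $v$ is $\ge$ every letter of $u\ge 2$ (when $u$ nonempty; and if $u$ is empty this is vacuous), and $c_2\ge 1$ so the $2$'s all lie in $v$, I would merge the ``$1$ we pulled out'' with the block to get that $w$ is determined by a $231$-avoiding word obtained by deleting that one $1$ and recording the rest --- the remaining $c_1-1$ copies of $1$ together with the letter $2$ behave, for the purpose of this count, like a single merged smallest-value class, yielding the term $\mathcal M(c_1+c_2,c_3,\dots,c_n)$ after the standard ``glue $1$'s and $2$'s'' bijection (here one checks that since no $2$ precedes the first $1$ and every letter after it is $\ge 2$, contracting $\{1,2\}$ to one symbol is a $231$-avoidance-preserving bijection). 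If instead $u$ \emph{does} contain a copy of $2$: then $u$ uses only letters in $\{2,3,\dots\}$ but being $\le$ everything in $v$, and also $u$ itself avoids $231$ with the $2$ being its smallest used value in some positions --- I would peel off the letters of $u$, which consist of some number $r\in\{1,\dots,c_1\}$ of... no, $u$ has no $1$'s by choice. Let me recalibrate: the two terms suggest that the summation index $r$ counts \emph{how many of the $c_1$ copies of $1$ sit in $u$} is wrong since $u$ has none. Rather, I would decompose on the position of the \emph{last} $2$, or symmetrically write $w$ via the \emph{last} occurrence of $1$; the summand $\sum_{r=1}^{c_1}\mathcal M(r,c_2-1,\dots)$ with $c_2$ dropping by $1$ signals: remove one specific $2$ (say the first $2$), and $r$ counts the number of $1$'s lying to its right. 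Concretely, if $w$ is $231$-avoiding and the first $2$ has $r$ $1$'s after it, then everything before that first $2$ consists of the other $c_1-r$ copies of $1$ (nothing larger can precede the first $2$ without creating a $231$ with a later larger letter --- this needs the case $c_2\ge 1$ so there genuinely is structure), and the word after deleting that first $2$ and those leading $1$'s lies in $\mathcal W_{(r,c_2-1,c_3,\dots,c_n)}$ and is $231$-avoiding, giving a bijection accounting for $\sum_{r=1}^{c_1}\mathcal M(r,c_2-1,c_3,\dots,c_n)$; the two cases (first $2$ preceded or not preceded by all remaining structure, equivalently the dichotomy ``no letter $>1$ before the first $1$'' vs.\ ``the first $1$ is itself preceded by something'') are complementary and give the two summands.

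\textbf{Main obstacle.} The genuinely delicate part is nailing down the correct structural dichotomy so that the two summands are reproduced \emph{exactly}, with the right bijections --- in particular verifying that ``glue the $1$'s and $2$'s'' is a $231$-avoidance-preserving bijection in the first case, and that in the second case deleting the first $2$ together with the $1$'s preceding it (and relabeling) is a bijection onto $231$-avoiding words with parameter vector $(r,c_2-1,c_3,\dots,c_n)$, for each $r$, with no double-counting or omission at the $r$-boundary. I expect that once the lemma ``$w=u\,1\,v$ at the first $1$ forces every letter of $v$ to dominate every letter of $u$'' is proved, these bijections are routine but require care about the empty-word edge cases and about the interaction between the letter $2$ and the letter $1$. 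Since Theorem~\ref{thm:harerecurrence} is attributed to \cite{Atkinson} and stated only as equivalent to their Lemma~3 via Proposition~\ref{prop:sortablechar}, an acceptable alternative is simply to cite that equivalence and refer to \cite{Atkinson} for the combinatorial recurrence on $231$-avoiding words; the decomposition sketched above is what a self-contained proof would look like.
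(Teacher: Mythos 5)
The paper does not prove this statement at all: it observes that, via Proposition \ref{prop:sortablechar}, the recurrence is exactly Lemma 3 of \cite{Atkinson} and cites it. So your fallback option (cite the equivalence and refer to \cite{Atkinson}) is precisely what the paper does. The self-contained argument you sketch, however, has genuine errors, not just rough edges. Your central structural lemma is false: writing $w=u\,1\,v$ at the first $1$, it is not true that every letter of $v$ must be at least every letter of $u$. The word $211$ avoids $231$, yet $u=2$, $v=1$. (What the first $1$ actually forces is that the letters of $u$ are weakly decreasing; it says nothing about $v$ dominating $u$.) Likewise, your Case-B claim that everything before the first $2$ consists of $1$'s is false: $321$ avoids $231$ and has a $3$ before its only $2$, which in turn precedes a $1$. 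Since letters $\geq 3$ may sit before the first $2$, your proposed inverse (prepend the deleted $1$'s and the $2$) is not well defined, and the forward map is not injective: for content $(2,1,1)$ the $231$-avoiding words with a $2$ before some $1$ are $2113, 3211, 1213, 3121, 1321$, and both $3121$ and $1321$ map to $31$ under ``delete the first $2$ and the $1$'s preceding it.'' Worse, the statistic $r=$ (number of $1$'s after the first $2$) gives fibers of sizes $3$ and $2$ over $r=1,2$ in this example, while the recurrence demands $\mathcal M(1,0,1)=2$ and $\mathcal M(2,0,1)=3$; so no bijection respecting your $r$ can exist, and the decomposition itself, not just its write-up, is wrong. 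Your justification of the $c_2=0$ case is also incorrect as stated (re-inserting a globally smallest letter can create a $231$, e.g.\ $23\mapsto 231$, and deleting the $1$'s does not land in the right set), although that case is trivial anyway: when the letter $2$ is absent, relabeling $3,4,\ldots$ down by one is an order-isomorphism, giving $\mathcal M(c_1,0,c_3,\ldots,c_n)=\mathcal M(c_1,c_3,\ldots,c_n)$.

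One part of your sketch is salvageable and worth recording: if the dichotomy is taken to be ``no $2$ precedes any $1$'' versus ``some $2$ precedes some $1$'' (not your prefix-before-the-first-$1$ condition, under which $1213$ lands in the wrong class and merging fails to be injective), then on the first class the map that replaces every $2$ by a $1$ and relabels $i\mapsto i-1$ for $i\geq 3$ is a $231$-avoidance-preserving bijection onto $\mathcal W_{(c_1+c_2,c_3,\ldots,c_n)}$, since the inverse (declare the first $c_1$ smallest-valued positions to be $1$'s and the rest $2$'s) cannot create a $231$. That accounts for the term $\mathcal M(c_1+c_2,c_3,\ldots,c_n)$. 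The second term requires a different analysis of the remaining class than the one you propose, and the counterexample above shows any correct argument must use a statistic other than the number of $1$'s following the first $2$; absent that, you should simply cite \cite{Atkinson} as the paper does.
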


The authors of \cite{Atkinson} used Theorem \ref{thm:harerecurrence} to find an explicit formula for the generating function of $\mathcal M(c_1,\ldots,c_n)$. Specifically, given variables $x_1,x_2,\ldots$, let $y_i=x_i(1-x_i)$. Let $A(z_1,\ldots,z_m)=\prod_{1\leq i<j\leq m}(z_i-z_j)$. The following theorem is Theorem $3$ in \cite{Atkinson}. 

\begin{theorem}[\cite{Atkinson}]\label{thm:haregenfunc}
In the above notation, we have \[\sum_{a_1,\ldots,a_n\geq 0}\mathcal M(a_1,\ldots,a_n)x_1^{a_1}\cdots x_n^{a_n}=-\frac{\sum_{i=1}^n(-1)^ix_iy_i^{n-2}A(y_1,\ldots,y_{i-1}y_{i+1}\ldots,y_n)}{A(y_1,\ldots,y_n)}.\]
\end{theorem}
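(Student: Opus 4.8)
The plan is to convert the recurrence of Theorem~\ref{thm:harerecurrence} into a functional equation for the full multivariate generating function and then solve that equation. Set $F_n(x_1,\dots,x_n)=\sum_{a_1,\dots,a_n\ge 0}\mathcal M(a_1,\dots,a_n)x_1^{a_1}\cdots x_n^{a_n}$, viewed as a formal power series, so that the theorem asserts $F_n=-\dfrac{\sum_{i=1}^n(-1)^ix_iy_i^{n-2}A(y_1,\dots,\widehat{y_i},\dots,y_n)}{A(y_1,\dots,y_n)}$, where $\widehat{y_i}$ indicates omission of $y_i$. The base case is a direct check: since $\mathcal M(a_1)=1$ we get $F_1(x_1)=1/(1-x_1)$, which matches the right-hand side when empty $A$-products are read as $1$ and $y_1^{-1}$ is interpreted formally (note $x_1/y_1=1/(1-x_1)$ is a genuine power series); alternatively one can start from $n=2$, where $\mathcal M(a_1,a_2)=\binom{a_1+a_2}{a_1}$ gives $F_2=1/(1-x_1-x_2)$ and the right-hand side simplifies to the same thing using $y_1-y_2=(x_1-x_2)(1-x_1-x_2)$.

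The first main step, which I expect to be the principal obstacle, is to show that for $n\ge 2$
\begin{equation*}
F_n(x_1,\dots,x_n)=\frac{y_1\,F_{n-1}(x_1,x_3,\dots,x_n)-y_2\,F_{n-1}(x_2,x_3,\dots,x_n)}{y_1-y_2},
\end{equation*}
where $F_{n-1}$ is the $(n-1)$-variable generating function for $\mathcal M$ evaluated at the indicated arguments. To prove this I would split the defining sum for $F_n$ according to whether $a_2=0$ or $a_2\ge 1$. The part with $a_2=0$ contributes exactly $F_{n-1}(x_1,x_3,\dots,x_n)$ by the second case of the recurrence. In the part with $a_2\ge 1$, substituting the first case of the recurrence and collecting the term $\mathcal M(a_1+a_2,a_3,\dots,a_n)$ by the value of $s=a_1+a_2$ (summing a finite geometric series in $a_2$) gives $\frac{x_2}{x_1-x_2}\bigl[F_{n-1}(x_1,x_3,\dots,x_n)-F_{n-1}(x_2,x_3,\dots,x_n)\bigr]$, while collecting the term $\sum_{r=1}^{a_1}\mathcal M(r,a_2-1,a_3,\dots,a_n)$ gives $\frac{x_2}{1-x_1}\bigl[F_n(x_1,\dots,x_n)-F_{n-1}(x_2,x_3,\dots,x_n)\bigr]$, using that summing partial sums over $a_1$ multiplies by $1/(1-x_1)$ and the shift $a_2\mapsto a_2-1$ multiplies by $x_2$. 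Adding these and solving the resulting linear relation for $F_n$ yields the displayed equation once one simplifies: $1+\frac{x_2}{x_1-x_2}=\frac{x_1}{x_1-x_2}$, then $\frac{x_2}{x_1-x_2}+\frac{x_2}{1-x_1}=\frac{y_2}{(x_1-x_2)(1-x_1)}$, and finally $(x_1-x_2)(1-x_1-x_2)=x_1(1-x_1)-x_2(1-x_2)=y_1-y_2$. The delicacy here is purely bookkeeping: one must justify the partial-sum and index-shift steps as identities of formal power series and check that the boundary contributions (from $s=0$ and $r=0$) cancel, which they do because the recurrence forces $\mathcal M(0,c_2,\dots)=\mathcal M(c_2,\dots)$.

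The second step is to solve this functional equation, and here the claimed formula is naturally read as a \emph{divided difference}. Recall the classical alternant identity: for nodes $t_1,\dots,t_m$ one has $g[t_1,\dots,t_m]=\dfrac{\sum_{i=1}^m(-1)^{i-1}g(t_i)\,A(t_1,\dots,\widehat{t_i},\dots,t_m)}{A(t_1,\dots,t_m)}$; absorbing the sign shows that the target expression is exactly $g_n[y_1,\dots,y_n]$ for the function $g_n(y)=y^{\,n-2}\,x(y)$, where $x(y)$ denotes the compositional inverse of $y=x(1-x)$ (so $x(y_i)=x_i$). One then proves $F_n=g_n[y_1,\dots,y_n]$ by induction on $n$: the base case was checked above, and for the inductive step observe that $g_n(y)=y\cdot g_{n-1}(y)$, so the Leibniz rule for divided differences $(y\cdot g)[t_0,\dots,t_k]=t_0\,g[t_0,\dots,t_k]+g[t_1,\dots,t_k]$ yields $y_1\,g_{n-1}[y_1,y_3,\dots,y_n]=g_n[y_1,y_3,\dots,y_n]-g_{n-1}[y_3,\dots,y_n]$ and, likewise, $y_2\,g_{n-1}[y_2,y_3,\dots,y_n]=g_n[y_2,y_3,\dots,y_n]-g_{n-1}[y_3,\dots,y_n]$. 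Subtracting, the two copies of $g_{n-1}[y_3,\dots,y_n]$ cancel, and dividing by $y_1-y_2$ gives $\frac{g_n[y_1,y_3,\dots,y_n]-g_n[y_2,y_3,\dots,y_n]}{y_1-y_2}=g_n[y_1,y_2,y_3,\dots,y_n]$ by the recursive definition of (symmetric) divided differences; combined with the inductive hypothesis for $F_{n-1}$ and the functional equation, this is exactly $F_n$. Finally, unwinding the alternant identity with $t_i=y_i$ and $g_n(y_i)=y_i^{\,n-2}x_i$ recovers the formula in the statement. (If one prefers to avoid the auxiliary series $x(y)$, an equivalent route is to take the right-hand side of the theorem as a given rational function in $x_1,\dots,x_n$ and verify directly that it satisfies the functional equation and the base case by clearing denominators and applying standard Vandermonde cofactor expansions; this is routine but more computational.)
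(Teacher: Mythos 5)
The paper does not actually prove this statement: it is quoted verbatim (as Theorem~\ref{thm:haregenfunc}) from Atkinson--Linton--Walker, so there is no internal proof to compare against. Judged on its own, your derivation from Theorem~\ref{thm:harerecurrence} is correct and self-contained. I checked the key functional equation: splitting $F_n$ at $a_2=0$ versus $a_2\geq 1$, the geometric-sum collapse of $\sum_{a_2=1}^{s}x_1^{s-a_2}x_2^{a_2}$ gives your $\frac{x_2}{x_1-x_2}$ term, the partial-sum/shift bookkeeping gives $\frac{x_2}{1-x_1}\bigl[F_n-F_{n-1}(x_2,x_3,\dots,x_n)\bigr]$ (the boundary term is handled exactly by your observation that the recurrence forces $\mathcal M(0,c_2,\dots)=\mathcal M(c_2,\dots)$), and solving the linear relation with the identities you list, including $(x_1-x_2)(1-x_1-x_2)=y_1-y_2$, does yield $F_n=\bigl(y_1F_{n-1}(x_1,x_3,\dots,x_n)-y_2F_{n-1}(x_2,x_3,\dots,x_n)\bigr)/(y_1-y_2)$; all divisions occur after differences that are genuinely divisible, so everything is legitimate at the level of formal power series. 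The divided-difference resolution is also sound: the alternant formula identifies the right-hand side of the theorem with $g_n[y_1,\dots,y_n]$ for $g_n(y_i)=y_i^{\,n-2}x_i$, and the Leibniz identity for multiplication by $y$ together with the symmetric recursion $\bigl(g_n[y_1,y_3,\dots,y_n]-g_n[y_2,y_3,\dots,y_n]\bigr)/(y_1-y_2)=g_n[y_1,\dots,y_n]$ closes the induction. Two minor points worth tightening in a write-up: (i) the compositional inverse $x(y)$ is not really needed --- all identities used involve only the node values $y_i$ and the assigned values $g_n(y_i)=y_i^{\,n-2}x_i$, and they hold as rational-function identities in $x_1,\dots,x_n$, which sidesteps any worry about $g_1(y)=y^{-1}x(y)$; and (ii) at $n=2$ the term $g_{n-1}[y_3,\dots,y_n]$ is an empty divided difference, so it is cleanest to start the induction at the directly verified case $n=2$, as you already suggest. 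With those small clarifications, the argument is a complete proof of the cited theorem from the cited recurrence.
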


As a corollary of Theorem \ref{thm:haregenfunc}, the authors of \cite{Atkinson} proved the surprising fact that $\mathcal M(c_1,\ldots,c_n)$ is a symmetric function of the arguments $c_1,\ldots,c_n$. That is, for any permutation $\sigma_1\cdots\sigma_n\in S_n$, \[\mathcal M(c_1,\ldots,c_n)=\mathcal M(c_{\sigma_1},\ldots,c_{\sigma_n}).\] 

We now turn our attention to deriving a recurrence relation for the $\slow$-fertility of $\Id_{(c_1,\ldots,c_n)}$.  Let $\mathcal{N}(c_1, \ldots, c_n)=|\slow^{-1}(\Id_{\mathbf{c}})|$ denote the number of $\slow$-sortable words in $\mathcal W_{(c_1,\ldots,c_n)}$. Equivalently, $\mathcal N(c_1,\ldots,c_n)$ is the number of words in $\mathcal W_{(c_1,\ldots,c_n)}$ that avoid the patterns $231$ and $221$. The following theorem reveals $\mathcal{N}(c_1, \dots, c_n)$ not to depend on the value of $c_n$. 

\begin{theorem}\label{thm:slowfertility}
For any positive integer $c_1$, we have $\mathcal{N}(c_1)=1$.  Moreover, for $n \geq 2$ and any positive integers $c_1, \dots, c_n$, we have
\begin{align*}
\mathcal{N}(c_1, \dots, c_n) &=2\mathcal{N}(c_1, \dots, c_{n-1})+\sum_{i=1}^{n-2}\mathcal{N}(c_1, \dots, c_i)\mathcal{N}(c_{i+1}, \dots, c_{n-1})\\
 &\quad +\sum_{i=1}^{n-1}\sum_{k=1}^{c_i-1}\mathcal{N}(c_1, \dots, c_{i-1},k)\mathcal{N}(c_i-k, c_{i+1}, \dots, c_{n-1}).
\end{align*}
\end{theorem}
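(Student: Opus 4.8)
The plan is to establish a bijective decomposition of the $\slow$-sortable words in $\mathcal{W}_{(c_1,\dots,c_n)}$ according to the behavior of the largest letter $n$. Since $\mathcal{N}(c_1,\dots,c_n)$ counts words avoiding both $231$ and $221$, I will first record the key structural fact: if $w$ avoids $231$ and $221$, then in the decomposition $w = A_1 n A_2 n \cdots n A_{k+1}$ (where $k = c_n$), all letters in $A_1 A_2 \cdots A_k$ must actually be \emph{larger} than every letter in $A_{k+1}$—otherwise a small letter appearing before some copy of $n$ and a copy of an intermediate letter appearing after would create a $231$ (or, if equal, a $221$). Moreover, $A_2, \dots, A_k$ must be empty: if some $A_j$ with $2 \le j \le k$ were nonempty, the copy of $n$ before it and after it, together with a letter of $A_j$, would form either $231$ or $221$ depending on whether that letter is strictly less than $n$; since all letters are $\le n-1 < n$, we get $n (n-1') n$-type patterns. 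Hence for $c_n \ge 2$ the word has the very rigid form $A_1 \underbrace{n \cdots n}_{c_n} A_{c_n+1}$ with every letter of $A_1$ exceeding every letter of $A_{c_n+1}$, and for $c_n = 1$ it is $A_1 n A_2$ with the same separation condition.

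The second step is to convert this separation condition into a product. Write the alphabet $\{1,\dots,n-1\}$ as a union of a top block and a bottom block: $A_{k+1}$ uses letters $1,\dots,i$ (for some $0 \le i \le n-1$) and $A_1$ uses letters $i+1,\dots,n-1$, each block using its letters with full multiplicity \emph{except} that the letter $i$ at the boundary may be split—some copies of $i$ in $A_1$ (in the top block, where it is the minimum) and the rest in $A_{k+1}$. This is exactly why the inner double sum over $k$ (copies of the split letter used on the left) with factors $\mathcal{N}(c_1,\dots,c_{i-1},k)\,\mathcal{N}(c_i-k,\dots,c_{n-1})$ appears, and the single sum $\sum_{i=1}^{n-2}\mathcal{N}(c_1,\dots,c_i)\mathcal{N}(c_{i+1},\dots,c_{n-1})$ handles the unsplit case (clean break between letter $i$ and letter $i+1$). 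The leading term $2\mathcal{N}(c_1,\dots,c_{n-1})$ comes from the two degenerate cases where one of $A_1$, $A_{k+1}$ is empty: either $A_{k+1} = \varepsilon$ and $A_1$ uses all of $1,\dots,n-1$, or $A_1 = \varepsilon$ and $A_{k+1}$ uses all of $1,\dots,n-1$ (here the $c_n$ copies of $n$ sit at the front). In both cases the residual word ranges over all $\slow$-sortable words on $c_1,\dots,c_{n-1}$, and one checks that no further $231$ or $221$ is created by prepending/appending the $n$'s; crucially the count is $2\mathcal N(c_1,\dots,c_{n-1})$ regardless of whether $c_n = 1$ or $c_n \ge 2$, which is the source of the claimed independence from $c_n$. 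Conversely, I must verify that any word assembled this way avoids $231$ and $221$, using that $A_1$ and $A_{k+1}$ individually avoid the patterns and that all letters of $A_1$ dominate all letters of $A_{k+1}$ so no cross-pattern arises, and that the interposed $n$'s create nothing new since $n$ is maximal.

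The main obstacle I anticipate is the careful bookkeeping of the split-letter boundary case: ensuring that the decomposition (bottom block $=$ letters $1,\dots,i-1$ in full plus $k$ copies of $i$; top block $=$ remaining $c_i - k$ copies of $i$ plus letters $i+1,\dots,n-1$ in full) is genuinely a bijection, i.e., that every $\slow$-sortable $w$ with $A_1, A_{k+1}$ both nonempty is counted exactly once—there is a subtlety in pinning down $i$ as the unique letter value that straddles the two blocks, and in confirming that within the top block the letter $i$ really does play the role of the minimum so that $A_1$, after subtracting $i$ down to $1$, corresponds to a $\slow$-sortable word on the shifted composition $(c_i - k, c_{i+1},\dots,c_{n-1})$. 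I would handle this by defining the two maps (decomposition and reassembly) explicitly and checking they are mutually inverse, being attentive to the edge cases $k = c_i$ (which collapses into the unsplit sum, so the ranges must be chosen to avoid double-counting: the inner sum runs $k$ from $1$ to $c_i - 1$, precisely excluding $k = c_i$) and $i = 0$ or $i = n-1$ (which collapse into the $2\mathcal{N}(c_1,\dots,c_{n-1})$ term). Once the case analysis is organized with these boundary conventions fixed, the identity follows by summing the contributions.
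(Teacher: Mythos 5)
Your structural lemma is false, and the error propagates through the whole argument. First, the claim that $A_2,\dots,A_k$ must be empty does not follow: a letter $x<n$ sitting between two copies of $n$ gives the subsequence $n\,x\,n$, which is a $212$ pattern, not a $231$ or $221$. For example, $13233$ avoids both $231$ and $221$ (and indeed $\slow(13233)=12333$), yet its $3$'s are not consecutive. What the $221$-avoidance actually forces is that no letter smaller than $n$ may appear after the \emph{second} copy of $n$, so the correct normal form is $w=A\,n\,B\,n^{c_n-1}$ with all but one $n$ at the very end and a possibly nonempty block $B$ between the first $n$ and that terminal run --- not your $A_1 n^{c_n}A_{c_n+1}$. (Note also that your form is internally inconsistent for $c_n\geq 2$: two consecutive $n$'s followed by any letter of a nonempty $A_{c_n+1}$ is itself a $221$.) Second, your separation condition is backwards. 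Since $\slow(A\,n\,B\,n^{c_n-1})=\slow(A)\slow(B)n^{c_n}$, sortability requires every letter of the block \emph{before} the first $n$ to be at most every letter of the later block; equivalently, a larger letter before an $n$ followed by a smaller letter after it is exactly a $231$. Requiring the letters before the $n$'s to dominate the letters after them, as you do, is precisely the configuration that creates $231$'s: your assembly would admit $223311$ (which contains $231$) and $2211$, $1221$ (which contain $221$), while excluding genuinely $\slow$-sortable words such as $132$, $1212$, and $13233$. So the bijection you describe does not exist, and the verification step you defer ("no cross-pattern arises") would fail.

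The repair is essentially the paper's proof, and your three-term bookkeeping survives once it is attached to the right decomposition: write $w=A\,n\,B\,n^{c_n-1}$ with $A,B$ free of the letter $n$; then $w$ is $\slow$-sortable iff $A$ and $B$ are $\slow$-sortable and no letter of $A$ exceeds a letter of $B$, with $AB\in\mathcal W_{(c_1,\dots,c_{n-1})}$. The term $2\mathcal N(c_1,\dots,c_{n-1})$ comes from $A=\varepsilon$ or $B=\varepsilon$; the single sum from a clean break in letter values between $A$ (the small letters $1,\dots,i$) and $B$ (the large letters $i+1,\dots,n-1$); and the double sum from a letter value $i$ split, $k$ copies into $A$ and $c_i-k$ into $B$. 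In particular the small letters go into $A$ and the large ones into $B$, the reverse of your assignment, and the independence from $c_n$ comes from the forced terminal run $n^{c_n-1}$, not from freedom to place the $n$-block at the front or the back.
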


\begin{proof}
The $n=1$ case is easy: $\mathcal{W}_{(c_1)}$ consists of only the identity word, which is clearly $\slow$-sortable, so $\mathcal{N}(c_1)=1$.

Now, consider $n \geq 2$.  Consider a $\slow$-sortable word $w\in\mathcal W_{(c_1,\ldots,c_n)}$. Since $w$ avoids the pattern $221$, all but one of the $n$'s must be at the very end of $w$, i.e., $w=AnBnn\cdots n$ (where there are $c_n-1$ $n$'s appearing at the end) for some (possibly empty) words $A$ and $B$ that do not contain the letter $n$.  We can now compute $$\slow(AnBnn\cdots n)=\slow(A) \slow(B) nnn \cdots n,$$ and this sorted word is the identity exactly when both $A$ and $B$ are $\slow$-sortable and no letter of $A$ is larger than a letter of $B$.  Note that $AB \in \mathcal{W}_{(c_1, \dots, c_{n-1})}$.

If $A$ is empty, then $B \in \mathcal{W}_{(c_1, \dots, c_{n-1})}$, and by definition there are $\mathcal{N}(c_1, \dots, c_{n-1})$ possible choices for $B$.  Similarly, if $B$ is empty, then there are $\mathcal{N}(c_1, \dots, c_{n-1})$ possible choices for $A$.  This pair of possibilities gives the first term in the recurrence relation.

Now, suppose both $A$ and $B$ are nonempty and there is no letter value that appears in both $A$ and $B$.  Then there exists some $1\leq i \leq n-2$ such that $A \in \mathcal{W}_{(c_1, \dots, c_i)}$ and $B \in \mathcal{W}_{(c_{i+1}, \dots, c_{n-1})}$.  In this case, there are $\mathcal{N}(c_1, \dots, c_i)\mathcal{N}(c_{i+1}, \dots, c_{n-1})$ such pairs of sortable words $(A,B)$.  Summing over $i$ gives the second term in the recurrence relation.

Finally, consider the case where there is some value $1\leq i \leq n-1$ that appears in both $A$ and $B$.  Then there exists $1\leq k \leq c_i-1$ such that $A$ contains $k$ $i$'s and $B$ contains $c_i-k$ $i$'s.  Hence, we have $A \in \mathcal{W}_{(c_1, \dots, c_{i-1}, k)}$ and $B \in \mathcal{W}_{(c_i-k, c_{i+1}, \dots, c_{n-1})}$.  As above, there are $\mathcal{N}(c_1, \dots, c_{i-1}, k)\mathcal{N}(c_i-k, c_{i+1}, \dots, c_{n-1})$ such pairs of sortable words $(A,B)$.  Summing over $i$ and $k$ gives the third term in the recurrence relation.  This exhausts all possibilities.
\end{proof}
Table \ref{tab:formula} gives the formulas for $\mathcal{N}(c_1, \dots, c_n)$ and $\mathcal{M}(c_1, \dots, c_n)$ for small values of $n$.  It is not difficult to see that $\mathcal{N}(c_1, \dots, c_n)$ grows as $c_i^{n-i-1}$ in each $i$ (e.g., $\mathcal{N}(c_1, c_2, c_3)$ grows quadratically in $c_1$ and linearly in $c_2$).

\begin{table}[h]
\begin{tabular}{|c|c|}
\hline & \\
$\mathcal{N}(c_1)=1$ &$\mathcal{M}(c_1)=1$\\
\hline & \\
$\mathcal{N}(c_1, c_2)=c_1+1$ &$\mathcal{M}(c_1, c_2)=\binom{c_1+c_2}{c_1}$\\
\hline & \\
$\begin{aligned} \mathcal{N}(c_1, c_2, c_3) &=\frac{1}{2}c_1^2+c_1c_2+\frac{3}{2}c_1+c_2+1\\ &=\frac{1}{2}(c_1+1)(c_1+2c_2+2) \end{aligned}$ &$\begin{aligned} \mathcal{M}(c_1, c_2, c_3) &=2^{c_1+c_2+c_3}-\sum_{r=0}^{c_1-1} \binom{c_1+c_2+c_3}{r}\\ &-\sum_{r=0}^{c_2-1}  \binom{c_1+c_2+c_3}{r}-\sum_{r=0}^{c_3-1}  \binom{c_1+c_2+c_3}{r} \end{aligned}$\\
\hline
\end{tabular}
\vspace{3pt}
\caption{The formulas for $\mathcal{N}(c_1, \dots, c_n)$ and $\mathcal{M}(c_1, \dots, c_n)$ quickly become complicated as $n$ grows. The second column comes from \cite{Atkinson}.}
\label{tab:formula}
\end{table}
We remark that this type of argument yields the similar but more complicated recurrence relation for $\mathcal M(c_1,\ldots,c_n)$.  Here, a sortable word $w=A_1 n A_2 n \cdots n A_{k+1}$ no longer has to avoid the pattern $221$, so we lose the requirement that $A_i$ be empty for all $i \geq 3$.  Rather, all we need is that each $A_i$ be $\fast$-sortable and that no letter of $A_i$ be greater than a letter of $A_j$ for $i<j$.  This division of letters among the $A_i$'s corresponds to ``dividing'' the word $\Id_{(c_1, \dots, c_{n-1})}$ into $n+1$ (possibly empty) contiguous pieces.

Although the general formula in Theorem \ref{thm:slowfertility} looks complicated, it simplifies in some special cases.  In particular, we investigate the $\ell$-uniform (normalized) words. These are words in which each letter value that appears in the word appears exactly $\ell$ times, i.e., $\textbf{c}=(\ell,\ell,\dots, \ell)$. 

To count these words, we make use of \emph{generating trees}, an enumerative tool that was introduced in \cite{Chung} and studied extensively afterward \cite{Banderier, West3, West2}. To describe a generating tree of a class of combinatorial objects, we first specify a scheme by which each object of size $n$ can be uniquely generated from an object of size $n-1$. We then label each object with the number of objects it generates. The generating tree consists of an ``axiom" that specifies the labels of the objects of size $1$ along with a ``rule" that describes the labels of the objects generated by each object with a given label. For example, in the generating tree 
\[\text{Axiom: }(2)\qquad\text{Rule: }(1)\leadsto(2),\quad(2)\leadsto(1)(2),\] the axiom $(2)$ tells us that we begin with a single object of size $1$ that has label $2$. The rule $(1)\leadsto(2),\hspace{.15cm}(2)\leadsto(1)(2)$ tells us that each object of size $n-1$ with label $1$ generates a single object of size $n$ with label $2$, whereas each object of size $n-1$ with label $2$ generates one object of size $n$ with label $1$ and one object of size $n$ with label $2$. This example generating tree describes objects counted by the Fibonacci numbers. 

\begin{theorem}\label{uniformwords}
The number of $\ell$-uniform words on the alphabet $[n]$ that avoid the patterns $231$ and $221$ is \[\mathcal{N}(\underbrace{\ell, \ell, \dots, \ell}_{n})=\frac{1}{\ell n+1}\binom{(\ell +1)n}{n}.\] 
\end{theorem}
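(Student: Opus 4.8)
The plan is to set up a generating tree for the class of $\slow$-sortable (equivalently, $\{231,221\}$-avoiding) $\ell$-uniform words and show that this generating tree coincides with the known generating tree for $(\ell+1)$-ary trees (or, equivalently, for sequences counted by the Fandom--Fuss--Catalan number $\frac{1}{\ell n+1}\binom{(\ell+1)n}{n}$). First I would pin down the structural description of such words: by Proposition~\ref{prop:sortablechar} together with the $221$-avoidance argument already used in the proof of Theorem~\ref{thm:slowfertility}, an $\ell$-uniform $\slow$-sortable word on $[n]$ must have all but one copy of its largest letter $n$ at the very end, and removing those trailing $n$'s (and then possibly the last remaining $n$ in a controlled way) should produce a shorter word in the same class but on a smaller alphabet, or split the word into two independent blocks. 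This is exactly the combinatorics encoded by the recurrence in Theorem~\ref{thm:slowfertility} specialized to $\textbf{c}=(\ell,\dots,\ell)$, so the generating tree must be read off from that specialization.

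Next I would identify the right \emph{statistic} to use as the label in the generating tree. The natural candidate, guided by the recurrence, is something like the number of distinct ``active positions'' at which a new largest letter-block can legally be inserted when passing from an $\ell$-uniform word on $[n-1]$ to one on $[n]$ — concretely, one plus the length of the maximal weakly increasing suffix of the current word, or the number of left-to-right positions among the blocks $A_1,\dots$ that respect the no-letter-of-$A_i$-exceeds-a-letter-of-$A_j$ constraint. I would then verify the axiom (the word consisting of $\ell$ copies of $1$ has a specific label, presumably $\ell+1$ or $2$ depending on normalization) and, crucially, the succession rule: an object with label $m$ should generate objects with labels forming the multiset $\{2,3,\dots,m,m+\ell\}$ or a similar $\ell$-parametrized pattern. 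Matching this rule against the standard generating tree for $(\ell+1)$-ary trees — which has rule $(m)\leadsto(2)(3)\cdots(m)(m+1)\cdots$ with exactly the right fan-out — then gives a label-preserving bijection of the trees, hence equality of the counting sequences, and the closed form $\frac{1}{\ell n+1}\binom{(\ell+1)n}{n}$ follows from the classical enumeration of $(\ell+1)$-ary trees by number of internal nodes.

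The main obstacle I anticipate is getting the succession rule exactly right: I need to track precisely how inserting the $\ell$ copies of the new largest letter $n$ (one copy somewhere in the interior, $\ell-1$ copies at the end) interacts with the $231$- and $221$-avoidance conditions, and show that the number of legal insertion sites changes from label $m$ to the predicted multiset of child labels. This requires a careful case analysis of where the single ``interior'' copy of $n$ can go relative to the weakly increasing suffix, and a verification that each choice changes the statistic in the claimed way — in particular that no two distinct insertions yield the same word (uniqueness of the generation scheme) and that every $\slow$-sortable $\ell$-uniform word on $[n]$ arises from a unique one on $[n-1]$. A secondary technical point is confirming that the statistic is well-defined and finite for $\ell$-uniform words and that the axiom label is computed correctly for the base case $n=1$. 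Once the rule is established, the remainder is a routine appeal to the known generating-tree enumeration, so essentially all the work is in the structural/bijective verification of the succession rule.
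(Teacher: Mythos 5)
Your strategy is the same as the paper's: build a generating tree for $\{231,221\}$-avoiding $\ell$-uniform words by inserting the block of $n$'s (one interior copy plus $\ell-1$ trailing copies, forced by $221$-avoidance), label each word by its number of legal insertion sites, and match against a known generating tree for the Fuss--Catalan numbers. However, the proposal stops exactly where the proof actually lives. The decisive content is the succession rule itself, and the candidates you float are not the right one: children labeled $\{2,3,\dots,m,m+\ell\}$, or a rule of the form $(m)\leadsto(2)(3)\cdots(m)(m+1)$, do not generate $\frac{1}{\ell n+1}\binom{(\ell+1)n}{n}$. The tree the paper uses (from Banderier et al., Example 9) has axiom $(\ell+1)$ and rule $(m)\leadsto(\ell+1)(\ell+2)\cdots(\ell+m)$, and the whole proof consists of verifying that the insertion scheme realizes precisely this rule. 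Concretely: if the legal positions for the first copy of $n$ in a word $w$ over $[n-1]$ are $j_1<\cdots<j_m$ (a position $j$ is legal iff there is no pair $\alpha<j\leq\beta$ with $w_\alpha>w_\beta$, which is exactly the $231$-avoidance constraint), and you insert at $j_r$ and append $\ell-1$ copies of $n$ at the end, then the legal positions of the new word $w'$ for the letter $n+1$ are exactly $j_1,\dots,j_r$ together with the $\ell$ positions at the far right end, so $w'$ has label $\ell+r$. Hence a word of label $m$ produces children with labels $\ell+1,\ell+2,\dots,\ell+m$, and the base word $1^\ell$ has label $\ell+1$.

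This verification is not a routine afterthought: it requires showing both that the old positions strictly to the right of the insertion point become illegal (the inserted $n$ sits above some later smaller letter, creating the forbidden pair) and that the positions at or to the left of $j_r$ remain legal, together with the uniqueness of the generation scheme (which you do flag). Since your writeup neither establishes this rule nor states it in the correct form, the argument as proposed has a genuine gap; filling it amounts to carrying out the case analysis you yourself identify as the main obstacle, after which the appeal to the known enumeration is indeed immediate.
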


\begin{proof}
The authors of \cite{Banderier} show (their Example 9) that objects counted by the $(\ell+1)$-Catalan numbers $\frac{1}{\ell n+1}{(\ell+1)n\choose n}$ can be described via the generating tree 
\begin{equation}\label{Eq1}
\text{Axiom: }(\ell+1)\qquad\text{Rule: }(m)\leadsto(\ell+1)(\ell+2)\cdots(\ell+m)\quad\text{for every }m\in\mathbb N.
\end{equation} Fix some positive integer $\ell$, and let $\mathcal P_\ell(231,221)$ denote the set of all normalized $\ell$-uniform words that avoid the patterns $231$ and $221$; we will show that these words can be described using the generating tree in \eqref{Eq1}. 

Let us say that a word $w'\in\mathcal P_\ell(231,221)$ over the alphabet $[n]$ is generated from a word $w\in\mathcal P_\ell(231,221)$ over the alphabet $[n-1]$ if we can obtain $w'$ by inserting $\ell$ copies of the letter $n$ into spaces between the letters in $w$. For example, when $\ell=n=3$, the word $121122$ generates the words 
\begin{equation}\label{Eq2}
312112233, \quad 132112233, \quad 121132233, \quad 121123233, \quad 121122333.
\end{equation} Because $w'$ avoids $221$, the last $\ell-1$ letters of $w'$ all have value $n$. Therefore, $w'$ is determined by specifying $w$ along with the position $j$ of the first appearance of the letter $n$ in $w'$. In the above example, the possible positions $j$ where we could have placed the first appearance of the letter $3$ were $1,2,5,6,7$. In general, we can place the first appearance of $n$ into position $j$ if and only if $1\leq j\leq\ell(n-1)+1$ and there do not exist $\alpha,\beta$ such that $1\leq \alpha<j\leq\beta\leq\ell(n-1)$ and $w_\alpha>w_\beta$. Indeed, this follows from the requirement that the new word $w'$ avoids $231$. We label the word $w$ with the number of such positions $j$, or, equivalently, the number of words that $w$ generates. 

Suppose we are given the word $w\in\mathcal P_\ell(231,221)$ over the alphabet $[n-1]$. Let $m$ be the label of $w$, and let $j_1<\cdots<j_m$ be the positions where we can place the letter $n$ so that, after appending an additional $\ell-1$ copies of $n$ to the end of the word, we obtain a word $w'\in\mathcal P_\ell(231,221)$ over the alphabet $[n]$ that is generated by $w$. If we were to place the letter $n$ in the $j_r^\text{th}$ position between letters of $w$ and then append an additional $\ell-1$ copies of $n$ to the end, we would obtain a word $w'$ with label $\ell+r$. Indeed, the words generated by $w'$ can be formed by inserting the letter $n+1$ into one of the positions $j_1,\ldots,j_r,\ell(n-1)+2,\ldots,\ell n+1$ between letters in $w'$ and appending $\ell-1$ copies of $n+1$ to the end. Therefore, $w$ (which has label $m$) generates words with labels $\ell+1,\ell+2,\ldots,\ell+m$. For example, the word $121122$ has label $5$ and generates the words in \eqref{Eq2}, which have labels $4,5,6,7,8$, respectively. This is precisely the rule in the generating tree in \eqref{Eq1}. Of course, the only word in $\mathcal P_\ell(231,221)$ over the alphabet $[1]$ is $11\cdots 1$ (of length $\ell$). This word has label $\ell+1$, which yields the axiom of the generating tree in \eqref{Eq1}.      
\end{proof}

\section{Concluding Remarks and Further Directions}

The introduction of the maps $\fast$ and $\slow$ leads to a variety of interesting problems, which we list in this section. 

Theorem \ref{thm:tortoisebeatsharebyalot} tells us that for each positive integer $k$, there is a word $\eta_{k+2}$ of length $2k+5$ with the property that $\langle \eta_{k+2}\rangle_{\fast}-\langle \eta_{k+2}\rangle_{\slow}=k$. More precisely, $\langle \eta_{k+2}\rangle_{\fast}=2k+2$ and $\langle \eta_{k+2}\rangle_{\slow}=k+2$. We suspect that $\eta_{k+2}$ is minimal among such words in the sense of the following conjectures. 

\begin{conjecture}\label{Conj2}
If $w$ is a word of length $m$, then \[\langle w\rangle_{\fast}-\langle w\rangle_{\slow}\leq\frac{m-5}{2}.\]  
\end{conjecture}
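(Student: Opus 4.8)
\emph{Reductions and the easy regime.} The plan is to first clear away the cheap cases and then isolate the real difficulty. We may assume $w$ is \emph{exceptional}, i.e.\ $\langle w\rangle_{\fast}>\langle w\rangle_{\slow}$; otherwise the left-hand side is at most $0$, which is harmless for $m\ge 5$, and since there are no exceptional words of length $\le 6$ we are implicitly in the range $m\ge 7$. We may also assume $w$ is normalized, say $w\in\mathcal W_{(c_1,\dots,c_n)}$ with each $c_i\ge 1$, as deleting gaps in the alphabet changes none of $m$, $\langle w\rangle_{\fast}$, $\langle w\rangle_{\slow}$. Put $d=\langle w\rangle_{\fast}-\langle w\rangle_{\slow}\ge 1$; we want $m\ge 2d+5$. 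The worst-case estimate $\langle w\rangle_{\fast}\le n-1$ proved above, together with $\langle w\rangle_{\slow}\ge 1$ (valid since $w\ne\Id$), already gives $d\le n-2$, hence $m\ge 2d+5$ whenever $m\ge 2n+1$. Thus only the short words with $n\le m\le 2n$ need attention — a range whose lower end $m=n$ consists of permutations (for which $d=0$) and whose upper part forces one to improve the bound $d\le n-2$ by at least one unit. Here the crude estimates are essentially useless: a permutation has $\langle\cdot\rangle_{\fast}=\langle\cdot\rangle_{\slow}$ yet can have $\langle\cdot\rangle_{\fast}=m-1$, and Theorem~\ref{thm:tortoisebeatsharebyalot} shows that inserting a single repeated letter already pushes $d$ up to $(m-5)/2$. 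What is really needed is a sharp lower bound on $\langle w\rangle_{\slow}$.

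\emph{Reducing to West's map.} The next step is to rephrase everything in terms of $s$. Write $\langle\pi\rangle_s$ for the least $k$ with $s^k(\pi)=\Id$. By the last assertion of Proposition~\ref{prop:permexpand}, $\langle w\rangle_{\slow}=\langle\phi_{\mathbf c}^{asc}(w)\rangle_s$. The parallel inequality $\langle w\rangle_{\fast}\le\langle\phi_{\mathbf c}^{des}(w)\rangle_s$ — expected on the heuristic that merging letter-values can only speed up sorting, and true in every example we have computed, though needing its own proof because the $s$-orbit of $\phi_{\mathbf c}^{des}(w)$ may leave $\Image(\phi_{\mathbf c}^{des})$ (see the footnote to Proposition~\ref{prop:permexpand}) — would reduce the conjecture to a statement about $s$ alone: if $\pi'$ is obtained from $\pi\in S_m$ by reversing, within each block of a partition of $\{1,\dots,m\}$ into intervals of consecutive integers, the left-to-right order of the entries carrying those values, then $\langle\pi\rangle_s-\langle\pi'\rangle_s\le (m-5)/2$. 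The borderline instance — one block of size $2$, i.e.\ a single adjacent transposition of two consecutive values — asserts that such a swap can drop the stack-sorting depth from nearly $m-1$ to about $(m-1)/2$ but no further; the pair $\phi_{\mathbf c}^{des}(\eta_j),\,\phi_{\mathbf c}^{asc}(\eta_j)$ realizes this collapse and would show the constant $1/2$ is best possible.

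\emph{The inductive core and the obstruction.} To attack the short-word regime one would induct by peeling off the largest letter: if $w=A_1nA_2n\cdots nA_{k+1}$ with $n$ occurring $k$ times, then $\langle w\rangle_{\fast}=1+\langle\fast(A_1)\cdots\fast(A_{k+1})\rangle_{\fast}$, whereas iterating $\slow$ behaves like a queue — the blocks $A_2,\dots,A_{k+1}$ are absorbed into the prefix one per application, so $\slow^j(w)$ reaches the shape $(\cdots)n^k$, from which point it evolves exactly as under $\fast$, only after $k$ steps when $A_{k+1}\ne\varepsilon$. The outcome one hopes to extract is that each repeated maximal letter buys at most one extra unit of $d$ at a cost of at least two letters of length. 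The crucial obstacle — and the reason the statement is still open — is that $d$ does \emph{not} decompose additively along this recursion: as the words $\eta_j$ show, one repeated letter can single-handedly open a gap that grows linearly in the alphabet size while adding only $O(1)$ to the length, by crippling $\fast$ (which dumps both copies of the maximum at the end and must then sort a hard permutation on the remaining values) without slowing $\slow$ (which keeps one copy in play as a helpful blockage). A proof must therefore pinpoint exactly which local configurations cause this amplification and show that each occupies at least two positions per unit of gap it produces — morally, that a word with gap $d$ contains an $\eta_{d+2}$-type configuration and so has length $\ge 2d+5$. In the end this amounts to a genuine lower bound on the stack-sorting depth $\langle\phi_{\mathbf c}^{asc}(w)\rangle_s$, and such bounds are notoriously elusive — the $t$-stack-sortable permutations remain unclassified for every $t\ge 3$ — so one would need to exploit the special form of $\phi_{\mathbf c}^{asc}(w)$ rather than invoke any general principle.
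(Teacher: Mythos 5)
This statement is Conjecture~\ref{Conj2}: the paper gives no proof of it and explicitly lists it among the open problems, so there is no ``paper proof'' to compare against. Your proposal, by your own admission, does not close it either; it is an attack plan, and you correctly flag where it stops. The parts you do establish are sound: restricting to exceptional, normalized words is the right reading of the statement (for $m\le 4$ the literal inequality already fails for trivial words like $w=1$, so the intended content is the exceptional case, and $\mathcal E_m=\emptyset$ for $m\le 6$ puts you in $m\ge 7$); the bound $\langle w\rangle_{\fast}\le n-1$ together with $\langle w\rangle_{\slow}\ge 1$ gives $d\le n-2$ and hence settles all words with $m\ge 2n+1$, leaving only $n\le m\le 2n$; and $\langle w\rangle_{\slow}=\langle \phi_{\mathbf c}^{asc}(w)\rangle_s$ does follow from the last assertion of Proposition~\ref{prop:permexpand}, since $s$ preserves $\Image(\phi_{\mathbf c}^{asc})$ and the only element of that image collapsing to $\Id_{\mathbf c}$ under $\psi_{\mathbf c}$ is the identity permutation.

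The gap is everything after that, and it is the entire substance of the conjecture. Two steps are asserted without proof: first, the inequality $\langle w\rangle_{\fast}\le\langle\phi_{\mathbf c}^{des}(w)\rangle_s$, which you acknowledge is only heuristic --- it cannot be extracted from Proposition~\ref{prop:permexpand}, whose footnote warns precisely that the $s$-orbit of $\phi_{\mathbf c}^{des}(w)$ leaves $\Image(\phi_{\mathbf c}^{des})$, so iterating that identity breaks down after one step; second, the ``inductive core,'' where the statement you would need (each unit of the gap $d$ forces at least two extra letters, morally an $\eta$-type configuration inside $w$) is never formulated as a precise lemma, and you yourself note that $d$ does not decompose additively along the peel-off-the-maximum recursion, which is exactly why the induction has no hypothesis to run on. So what you have is a correct reduction of Conjecture~\ref{Conj2} to the regime $n\le m\le 2n$ plus a plausible reformulation in terms of $s$, not a proof; since the paper leaves the conjecture open (offering only the extremal examples $\eta_n$ of Theorem~\ref{thm:tortoisebeatsharebyalot} showing the constant $\tfrac12$ would be sharp), this is not a failing relative to the paper, but it should be presented as partial progress rather than as a proof.
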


\begin{conjecture}\label{Conj3}
For every word $w$, we have \[\langle w\rangle_{\fast}\leq 2\langle w\rangle_{\slow}-2.\]
\end{conjecture}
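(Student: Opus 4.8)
The plan is to induct on $\langle w\rangle_{\slow}$, reducing the whole statement to a single lemma that simulates one $\slow$-pass by two $\fast$-passes. The first step is to reformulate the slow-distance purely in terms of West's map: by the last part of Proposition \ref{prop:permexpand} we have $\slow^k(w)=(\psi_{\mathbf c}\circ s^k\circ\phi_{\mathbf c}^{asc})(w)$, and since $s$ preserves $\Image(\phi_{\mathbf c}^{asc})$ while $\psi_{\mathbf c}$ is a left inverse of $\phi_{\mathbf c}^{asc}$, it follows that $\langle w\rangle_{\slow}=\langle\phi_{\mathbf c}^{asc}(w)\rangle_s$, the ordinary stack-sorting distance of the associated permutation. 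No such identity is available for $\fast$ (as the footnote after Proposition \ref{prop:permexpand} already hints), and this asymmetry is the source of the difficulty. The reformulation does, however, make transparent the facts that power the induction: $\langle\slow(w)\rangle_{\slow}=\langle w\rangle_{\slow}-1$ for $w\neq\Id$, and likewise $\langle\fast(w)\rangle_{\fast}=\langle w\rangle_{\fast}-1$. Granting these, the conjecture for $w$ follows from the conjecture for $\slow(w)$ together with the inequality $\langle w\rangle_{\fast}\le\langle\slow(w)\rangle_{\fast}+2$, once the small cases $\langle w\rangle_{\slow}\le 2$ are checked directly (here Proposition \ref{prop:sortablechar} is the relevant tool: if $\slow(w)$ avoids $231$ and $221$, one wants to conclude that $w$ is sorted by two $\fast$-passes). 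So all the content is in the \emph{one-pass simulation lemma}: $\fast^2(w)$ is never farther from the identity, measured by $\langle\cdot\rangle_{\fast}$, than $\slow(w)$ is.

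I would try to prove this lemma through the stack mechanics, or equivalently through the trees of Lemma \ref{lem:treecorrespondence}, where $\slow(w)=P(S_{\mathcal L}^{-1}(w))$ and $\fast(w)=P(S_{\mathcal R}^{-1}(w))$ and the only structural difference between $S_{\mathcal L}^{-1}$ and $S_{\mathcal R}^{-1}$ is whether a run of equal maximal letters is hung along left edges or right edges. The heuristic is that $\slow$ differs from $\fast$ only in that it pessimistically pops a letter when an equal letter arrives, so it loses ground only on ``plateaus'' of equal values; one extra $\fast$-pass should absorb exactly that loss, while a second $\fast$-pass at least matches whatever genuine descent-resolving progress the $\slow$-pass made. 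To make this precise I would decompose $w$ around its largest letters as $w=A_1nA_2n\cdots nA_{k+1}$ and compare the recursions $\slow(w)=\slow(A_1)\slow(A_2)n\cdots n\slow(A_{k+1})n$ and $\fast(w)=\fast(A_1)\cdots\fast(A_{k+1})n\cdots n$, then apply $\fast$ once more; the blocks $A_i$ have strictly smaller largest letter, so an inductive hypothesis of the shape ``$\fast^2$ dominates $\slow$'' should propagate through them, with the copies of $n$ supplying the slack of at most two passes.

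The main obstacle is precisely this one-pass lemma, and it is a genuine one: the two tie-breaking conventions interact across iterations in a way that defeats a naive monotone potential, and the exceptional words $\eta_n$ and the nonempty sets $\mathcal E_m$ show the $\fast$-versus-$\slow$ comparison is far from monotone, so the bound of ``two extra passes'' is tight and cannot be obtained by wasteful slack estimates. There is also a subtlety in pinning down the exact additive constant: the crude form of the one-pass lemma only gives $\langle w\rangle_{\fast}\le 2\langle w\rangle_{\slow}-1$, and squeezing out the last unit requires a sharper statement in the regime where $\slow(w)$ is already nearly sorted. A reasonable fallback, should the tree bookkeeping become unwieldy, is to argue entirely in $S_{c_1+\cdots +c_n}$: use $\langle w\rangle_{\slow}=\langle\phi_{\mathbf c}^{asc}(w)\rangle_s$ and try to bound $\langle w\rangle_{\fast}$ directly against the stack-sorting distance of a permutation lifting $w$, exploiting known facts such as $\langle\pi\rangle_s\le n-1$ and the monotonicity of $s$ under coarsening of values. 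Either way, the heart of the matter is the factor-of-two comparison of one $\slow$-pass against two $\fast$-passes; everything else is bookkeeping and finitely many base cases.
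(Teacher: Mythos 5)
This statement is Conjecture \ref{Conj3}: the paper offers no proof of it and explicitly lists it as an open problem, so there is nothing in the paper to match your argument against. More importantly, your proposal does not prove it either. The entire content is delegated to the ``one-pass simulation lemma'' $\langle \fast^2(w)\rangle_{\fast}\leq\langle\slow(w)\rangle_{\fast}$ (equivalently $\langle w\rangle_{\fast}\leq\langle\slow(w)\rangle_{\fast}+2$), which you do not prove and for which you offer no evidence beyond a heuristic about plateaus of equal letters. This lemma is at least as delicate as the conjecture itself: combined with the conjecture it would force exact equality $\langle\slow(\eta_n)\rangle_{\fast}=2n-4$ all along the $\slow$-orbit of the words $\eta_n$ of Theorem \ref{thm:tortoisebeatsharebyalot}, so there is no slack anywhere, and the proposed block decomposition $w=A_1nA_2n\cdots nA_{k+1}$ does not obviously control how subsequent $\fast$-passes recombine letters across blocks --- which is precisely the cross-iteration interaction that produces the exceptional words in the first place. (The parts of your setup that are correct --- $\langle w\rangle_{\slow}=\langle\phi_{\mathbf{c}}^{asc}(w)\rangle_s$ via Proposition \ref{prop:permexpand}, and the trivial identities $\langle\slow(w)\rangle_{\slow}=\langle w\rangle_{\slow}-1$, $\langle\fast(w)\rangle_{\fast}=\langle w\rangle_{\fast}-1$ --- are bookkeeping and carry none of the difficulty.)

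The base cases are also not the routine finite check you describe. As literally stated the inequality fails when $\langle w\rangle_{\slow}\leq 1$ (e.g.\ $w=21$ gives $1\leq 0$), so your induction must bottom out at $\langle w\rangle_{\slow}=2$, and the needed base claim --- every word with $\langle w\rangle_{\slow}=2$ satisfies $\langle w\rangle_{\fast}\leq 2$ --- is an infinite family of nontrivial instances of the conjecture, not a finite verification. Proposition \ref{prop:sortablechar} does not help here: it characterizes when a single pass of $\fast$ or $\slow$ sorts $w$ (avoidance of $231$, resp.\ $231$ and $221$, by $w$ itself); it says nothing of the form ``if $\slow(w)$ avoids $231$ and $221$ then $\fast^2(w)=\Id$.'' So what you have is a plausible reduction scheme, with its two essential ingredients (the one-pass lemma and the $\langle w\rangle_{\slow}=2$ base case) both unproven; the conjecture remains open.
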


After Theorem \ref{thm:tortoisebeatsharebyalot}, we defined an exceptional word to be a word $w$ such that $\langle w\rangle_{\fast}>\langle w\rangle_{\slow}$. We also let $\mathcal E_m$ denote the set of exceptional normalized words of length $m$. We have calculated that $|\mathcal E_m|=0$ for $m\leq 6$, $|\mathcal E_7|=4$, $|\mathcal E_8|=172$, and $|\mathcal E_9|=5001$. Let $\mathcal{NW}_m$ denote the set of normalized words of length $m$. We are interested in the ratios $|\mathcal E_m|/|\mathcal{NW}_m|$. These values for $m=7,8,9$ are (approximately) $0.000085$, $0.000315$, $0.000706$. This leads us to make the following conjecture. 

\begin{conjecture}\label{Conj1}
The numbers \[\frac{|\mathcal E_m|}{|\mathcal{NW}_m|}\] are increasing in $m$.
\end{conjecture}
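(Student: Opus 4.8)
The plan is to exploit one structural identity. For words $u$ over $[p]$ and $v$ over $[q]$, write $u\oplus v$ for the word of length $|u|+|v|$ obtained by concatenating $u$ with the word gotten from $v$ by adding $p$ to every letter. Combining Proposition \ref{prop:permexpand} with the classical fact $s(\alpha\oplus\beta)=s(\alpha)\oplus s(\beta)$ for permutations (an easy induction from West's recursion $s(LnR)=s(L)s(R)n$ and the shift-invariance of $s$), one gets $\fast(u\oplus v)=\fast(u)\oplus\fast(v)$ and $\slow(u\oplus v)=\slow(u)\oplus\slow(v)$. Iterating and using $\Id_{(\mathbf c,\mathbf d)}=\Id_{\mathbf c}\oplus\Id_{\mathbf d}$ then gives $\langle u\oplus v\rangle_{\fast}=\max(\langle u\rangle_{\fast},\langle v\rangle_{\fast})$ and $\langle u\oplus v\rangle_{\slow}=\max(\langle u\rangle_{\slow},\langle v\rangle_{\slow})$. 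Two consequences drive everything: (i) appending to a word one new letter larger than all the others (equivalently, forming $w\oplus v$ with $v$ a single-letter word) preserves both $\langle\cdot\rangle_{\fast}$ and $\langle\cdot\rangle_{\slow}$; and (ii) if $w$ is exceptional, so that $\langle w\rangle_{\fast}\ge 2$ and $\langle w\rangle_{\slow}\ge 1$, and $v$ is any $\slow$-sortable word (hence $\langle v\rangle_{\fast},\langle v\rangle_{\slow}\le 1$), then both $v\oplus w$ and $w\oplus v$ are exceptional, since taking the maxima above changes nothing relevant.

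First I would record the resulting reduction. Write $a_m=|\mathcal E_m|$ and $b_m=|\mathcal{NW}_m|$. By (i), the operation $w\mapsto w\oplus(\text{new top letter})$ is an injection of $\mathcal{NW}_m$ onto the set of normalized words of length $m+1$ whose last letter strictly exceeds every other letter, and it restricts to a bijection from $\mathcal E_m$ onto the exceptional such words. So, letting $a'_m$ and $b'_m$ count respectively the exceptional and the normalized words of length $m+1$ whose last letter is \emph{not} strictly larger than all the others, we have $a_{m+1}=a_m+a'_m$ and $b_{m+1}=b_m+b'_m$, and Conjecture \ref{Conj1} becomes equivalent to the single inequality $a'_m/b'_m>a_m/b_m$: exceptional words must be proportionally denser among these ``residual'' length-$(m+1)$ words than among all length-$m$ words. (This matches the computed data; e.g. for $m=7$ the two ratios are $168/498542$ and $4/47293$.)

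Next I would attack the residual inequality by using (ii) to build many residual exceptional words. For each $k\le m$, each $w\in\mathcal E_k$ that does \emph{not} end in its strict maximum (there are $a_k-a_{k-1}$ of these, by the bijection above applied at length $k$), and each $\slow$-sortable normalized word $v$ of length $m+1-k$, the word $v\oplus w$ is exceptional and does not end in its strict maximum, hence is residual; similarly one obtains residual exceptional words $w\oplus v$ with $v$ $\slow$-sortable and not ending in its strict maximum. After an inclusion--exclusion correcting for words that admit several such $\oplus$-decompositions, this should produce a workable lower bound on $a'_m$ in terms of the $a_k$ ($k\le m$) and the counts of $\slow$-sortable words (which are governed by the recurrence of Theorem \ref{thm:slowfertility}), and this bound is then to be compared with the exact value $b'_m=b_{m+1}-b_m$.

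The hard part is this final comparison, and I expect it to be the principal obstacle. Since $b'_m/b_m$ is roughly $m/\ln 2$ (consecutive ratios of ordered Bell numbers), the inequality $a'_m/b'_m>a_m/b_m$ forces $a'_m$ to be of order $m\,a_m$; in other words, the $\oplus$-constructions above must capture essentially all of $\mathcal E_{m+1}$ up to a bounded factor. Proving this would amount to a structure theorem to the effect that every exceptional word is, up to a controlled operation, obtained by gluing shorter exceptional words onto $\slow$-sortable words — a phenomenon that the paper's empirical observation (that every element of $\mathcal E_8$ contains an element of $\mathcal E_7$ as a pattern) suggests but does not establish. The alternative route, namely determining the asymptotics of $|\mathcal E_m|$ directly and comparing with those of the ordered Bell numbers, looks even less tractable, since there is at present no characterization of exceptional words and hence no generating function for $|\mathcal E_m|$.
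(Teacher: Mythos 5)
You should first be clear that this statement is Conjecture \ref{Conj1} of the paper: the authors give no proof, only the data for $m=7,8,9$, so there is no argument of theirs to compare against, and a complete proof would be a new result. Your preparatory reductions are correct and potentially useful: the identities $\fast(u\oplus v)=\fast(u)\oplus\fast(v)$ and $\slow(u\oplus v)=\slow(u)\oplus\slow(v)$ do follow by induction from the recursive definitions (or via Proposition \ref{prop:permexpand}), they yield $\langle u\oplus v\rangle_{\fast}=\max(\langle u\rangle_{\fast},\langle v\rangle_{\fast})$ and likewise for $\slow$, so appending a new strict maximum preserves both statistics; hence, writing $a_m=|\mathcal E_m|$ and $b_m=|\mathcal{NW}_m|$ (the Fubini numbers), your bookkeeping $a_{m+1}=a_m+a'_m$, $b_{m+1}=b_m+b'_m$ is right, and by the mediant inequality the conjecture is indeed equivalent to $a'_m b_m>a_m b'_m$. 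Your numerical check at $m=7$ agrees with the paper's data.

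However, the proposal stops exactly where the difficulty begins, and as it stands the construction cannot close the gap. Since $b_{m+1}/b_m\sim (m+1)/\ln 2$, the required inequality forces $a'_m$ to be of order $m\,a_m$, i.e.\ roughly $m$ residual exceptional words of length $m+1$ for each exceptional word of length $m$. Your $\oplus$-gluings give, for $k=m$, only about $a_m-a_{m-1}$ words (prepending a single small letter), and for $k<m$ terms bounded by $a_k$ times the number of normalized sortable words of length $m+1-k$; nothing in the proposal shows this sum is anywhere near $m\,a_m$, and if $a_m$ grows superexponentially (as the data suggests) the $k<m$ terms are negligible, leaving a shortfall by a factor of order $m$. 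The ``structure theorem'' you invoke to make up the difference --- that essentially every exceptional word is obtained by gluing a shorter exceptional word to a sortable one --- is not proved, and it cannot hold verbatim for $\oplus$-decompositions: every element of $\mathcal E_7$ listed in the paper (e.g.\ $3662451$) ends in its unique letter $1$ and is therefore $\oplus$-indecomposable, and the paper's empirical remark about $\mathcal E_8$ concerns containment of an $\mathcal E_7$ element as a \emph{pattern}, which is far weaker than the gluing decomposition your count would need. So the essential inequality $a'_m/b'_m>a_m/b_m$ is left unproven; this is a genuine gap, and the statement remains open.
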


If Conjecture \ref{Conj1} is true, then $\displaystyle\lim_{m\to\infty}|\mathcal E_m|/|\mathcal{NW}_m|$ exists. It would be very interesting to calculate (or at least estimate) this limit.  

Each element of $\mathcal E_8$ contains one of the words in $\mathcal E_7$ as a pattern. This suggests that it could be possible to find conditions based on pattern avoidance that are necessary and/or sufficient for a word to be exceptional.  

We saw in Section 4 that every nonnegative integer is a $\fast$-fertility number and a $\slow$-fertility number. In other words, if we define maps $\mathcal F_{\fast},\mathcal F_{\slow}:\mathcal W\to\mathbb N\cup\{0\}$ by $\mathcal F_{\fast}(w)=|\fast^{-1}(w)|$ and $\mathcal F_{\slow}(w)=|\slow^{-1}(w)|$, then \[\mathcal F_{\fast}(\mathcal W)=\mathcal F_{\slow}(\mathcal W)=\mathbb N\cup\{0\}.\]

Let $\mathcal P$ denote the set of all permutations. The first author has conjectured \cite{Defant4} that there are infinitely many positive integers that are not in the set $\mathcal F_{\fast}(\mathcal P)=\mathcal F_{\slow}(\mathcal P)$ (where these sets are identical because $\fast$, $\slow$, and $s$ all agree on permutations). It would be interesting to see if this phenomenon persists when we restrict attention to certain natural sets of words. For example, we have the following question. Recall that a $2$-uniform word is a word in which each letter that appears actually appears exactly twice.  

\begin{question}
What can we say about $\mathcal F_{\fast}(\mathcal P_2)$ and $\mathcal F_{\slow}(\mathcal P_2)$, where $\mathcal P_2$ denotes the set of all $2$-uniform words? 
\end{question}

Recall from the beginning of Section 5 that a word $w$ is $t$-$\fast$-sortable (respectively, $t$-$\slow$-sortable) if $\langle w\rangle_{\fast}\leq t$ (respectively, $\langle w\rangle_{\slow}\leq t$). We have not said anything about these families of words when $t\geq 2$. It would be interesting to investigate $t$-$\fast$-sortable words and $t$-$\slow$-sortable words in general. In the past, there has been a huge amount of interest in $2$-stack-sortable permutations \cite{Bevan,Bona,BonaSurvey,Cori,Dulucq2,Fang,Goulden,West,Consumption}. It is probably very difficult to obtain an explicit formula for the number of $2$-$\fast$-sortable words (or $2$-$\slow$-sortable words) in $\mathcal W_{\textbf{c}}$ for arbitrary vectors $\textbf c$, but deriving recurrences might be possible. Also, one might be able to prove more refined statements about specific choices of $\textbf c$, such as $(1,\underbrace{2,\ldots,2}_{n-1})$, $(\underbrace{1,\ldots,1}_{n-1},2)$, and $(\underbrace{2,\ldots,2}_{n})$. 

Finally, let us mention that the authors of \cite{Defant3} have found several interesting properties of \emph{uniquely sorted} permutations, which are permutations with fertility $1$. Let us say a word $w$ is \emph{uniquely $\fast$-sorted} if $|\fast^{-1}(w)|=1$ and \emph{uniquely $\slow$-sorted} if $|\slow^{-1}(w)|=1$. We propose the investigation of uniquely $\fast$-sorted words and uniquely $\slow$-sorted words as a potential area for future research.  

\section{Acknowledgments}
This research was conducted at the University of Minnesota Duluth REU and was supported by NSF/DMS grant 1650947 and NSA grant H98230-18-1-0010.  The authors wish to thank Joe Gallian for running the REU program and providing encouragement.  The first author was also supported by a Fannie and John Hertz Foundation Fellowship and an NSF Graduate Research Fellowship.

\end{document}